\definecolor{pingreen}{rgb}{0,39,14}
\crefname{section}{§}{§§}
\Crefname{section}{§}{§§}
\newtheorem*{rep@theorem}{\rep@title}
\newcommand{\newreptheorem}[2]{%
\newenvironment{rep#1}[1]{%
 \def\rep@title{#2 \ref{##1}}%
 \begin{rep@theorem}}%
 {\end{rep@theorem}}}
\newtheorem{theorem}{Theorem}[section]
\newtheorem{lemma}[theorem]{Lemma}
\newtheorem{remark}[theorem]{Remark}
\newtheorem{proposition}[theorem]{Proposition}
\newtheorem{corollary}[theorem]{Corollary}
\newcommand{\R}{\mathbb{R}}
\newcommand{\Fcal}{\mathcal{F}}
\def\T{\mathbb{T}}
\def\N{\mathbb N}
\def\eps{\varepsilon}
\def\eps{\varepsilon}
\def\loc{\mathrm{loc}}
\def\d {\,\mathrm {d}}
\def\dx{\,\mathrm {d}x}
\def\dz{\,\mathrm {d}z}
\def\ds{\,\mathrm {d}s}
\def\dt{\,\mathrm {d}t}
\def\dy{\,\mathrm {d}y}
\def\lin{\mathrm{lin}}
\def\de0#1{\rule[3pt]{#1}{0.4pt} \hspace{-0.1pt} \rule[3.05pt]{0.05pt}{0.4pt} \hspace{-0.1pt} \rule[3.1pt]{0.05pt}{0.4pt} \hspace{-0.1pt} \rule[3.15pt]{0.05pt}{0.4pt} \hspace{-0.1pt} \rule[3.2pt]{0.05pt}{0.4pt} \hspace{-0.1pt} \rule[3.25pt]{0.05pt}{0.4pt} \hspace{-0.1pt} \rule[3.3pt]{0.05pt}{0.4pt} \hspace{-0.1pt} \rule[3.35pt]{0.05pt}{0.4pt} \hspace{-0.1pt} \rule[3.4pt]{0.05pt}{0.4pt} \hspace{-0.1pt} \rule[3.45pt]{0.05pt}{0.4pt} \hspace{-0.1pt} \rule[3.5pt]{0.05pt}{0.4pt} \hspace{-0.1pt} \rule[3.55pt]{0.05pt}{0.4pt} \hspace{-0.1pt} \rule[3.6pt]{0.05pt}{0.4pt} \hspace{-0.1pt} \rule[3.65pt]{0.05pt}{0.4pt} \hspace{-0.1pt} \rule[3.7pt]{0.05pt}{0.4pt} \hspace{-0.1pt} \rule[3.75pt]{0.05pt}{0.4pt} \hspace{-0.1pt} \rule[3.8pt]{0.05pt}{0.4pt} \hspace{-0.1pt} \rule[3.85pt]{0.05pt}{0.4pt} \hspace{-0.1pt} \rule[3.9pt]{0.05pt}{0.4pt} \hspace{-0.1pt} \rule[3.95pt]{0.05pt}{0.4pt} \hspace{-0.1pt} \rule[4.0pt]{0.05pt}{0.4pt} \hspace{-0.1pt} \rule[4.05pt]{0.05pt}{0.4pt} \hspace{-0.1pt} \rule[4.1pt]{0.05pt}{0.4pt} \hspace{-0.1pt} \rule[4.15pt]{0.05pt}{0.4pt} \hspace{-0.1pt} \rule[4.2pt]{0.05pt}{0.4pt} \hspace{-0.1pt} \rule[4.25pt]{0.05pt}{0.4pt} \hspace{-0.1pt} \rule[4.3pt]{0.05pt}{0.4pt} \hspace{-0.1pt} \rule[4.35pt]{0.05pt}{0.4pt} \hspace{-0.1pt} \rule[4.4pt]{0.05pt}{0.4pt} \hspace{-0.1pt} \rule[4.45pt]{0.05pt}{0.4pt} \hspace{-0.1pt} \rule[4.5pt]{0.05pt}{0.4pt} \hspace{-0.1pt} \rule[4.55pt]{0.05pt}{0.4pt} \hspace{-0.1pt} \rule[4.6pt]{0.05pt}{0.4pt} \hspace{-0.1pt} \rule[4.65pt]{0.05pt}{0.4pt} \hspace{-0.1pt} \rule[4.7pt]{0.05pt}{0.4pt} \hspace{-0.1pt} \rule[4.75pt]{0.05pt}{0.4pt} \hspace{-0.1pt} \rule[4.8pt]{0.05pt}{0.4pt} \hspace{-0.1pt} \rule[4.85pt]{0.05pt}{0.4pt} \hspace{-0.1pt} \rule[4.9pt]{0.05pt}{0.4pt} \hspace{-0.1pt} \rule[4.95pt]{0.05pt}{0.4pt} \hspace{-0.1pt} \rule[5.0pt]{0.05pt}{0.4pt} \hspace{-0.1pt} \rule[5.05pt]{0.05pt}{0.4pt} \hspace{-0.1pt} \rule[5.1pt]{0.05pt}{0.4pt} \hspace{-0.1pt} \rule[5.15pt]{0.05pt}{0.4pt} \hspace{-0.1pt} \rule[5.2pt]{0.05pt}{0.4pt} \hspace{-0.1pt} \rule[5.25pt]{0.05pt}{0.4pt} \hspace{-0.1pt} \rule[5.3pt]{0.05pt}{0.4pt} \hspace{-0.1pt} \rule[5.35pt]{0.05pt}{0.4pt} \hspace{-0.1pt} \rule[5.4pt]{0.05pt}{0.4pt} \hspace{-0.1pt} \rule[5.45pt]{0.05pt}{0.4pt} \hspace{-0.1pt} \rule[5.5pt]{0.05pt}{0.4pt} \hspace{-0.1pt} \rule[5.55pt]{0.05pt}{0.4pt} \hspace{-0.1pt} \rule[5.6pt]{0.05pt}{0.4pt} \hspace{-0.1pt} \rule[5.65pt]{0.05pt}{0.4pt} \hspace{-0.1pt} \rule[5.7pt]{0.05pt}{0.4pt} \hspace{-0.1pt} \rule[5.75pt]{0.05pt}{0.4pt} \hspace{-0.1pt} \rule[5.8pt]{0.05pt}{0.4pt} \hspace{-0.1pt} \rule[5.85pt]{0.05pt}{0.4pt} \hspace{-0.1pt} \rule[5.9pt]{0.05pt}{0.4pt} \hspace{-0.1pt} \rule[5.95pt]{0.05pt}{0.4pt} \hspace{-0.1pt} \rule[6.0pt]{0.05pt}{0.4pt}}	
\numberwithin{equation}{section}
\author[1]{Sara Daneri\thanks{sara.daneri@gssi.it}}
\author[2]{Emanuela Radici\thanks{emanuela.radici@epfl.ch}}
\author[3]{Eris Runa\thanks{eris.runa@gmail.com}}
\affil[1]{Gran Sasso Science Institute, L'Aquila, Italy}
\affil[2]{Ecole Polytechnique F\'ed\'erale de Lausanne, Switzerland}
\affil[3]{Deutsche Bank AG, London, UK}
  \title{Deterministic particle approximation of aggregation diffusion equations with nonlinear mobility }
\begin{document}
  	
  	\maketitle

  \begin{abstract}
  	We consider a class of aggregation-diffusion equations on unbounded one dimensional  domains with Lipschitz nonincreasing mobility function. We show strong $L^1$-convergence of a suitable deterministic particle approximation to weak solutions of a class aggregation-diffusion  PDEs (coinciding with the classical ones in the no vacuum regions) for any bounded initial data of finite energy. In order to prove well-posedness and convergence of the scheme with no BV or no vacuum assumptions and overcome the issues posed in this setting  by the presence of a mobility function, we improve  and strengthen the techniques introduced in \cite{DRR}.
  \end{abstract}

  \section{Introduction}
  In this paper we consider  the following aggregation-diffusion equation with nonlinear mobility $v$
  \begin{equation}
  \label{eq:mainPDE}
  \partial_t \rho = \partial_x\big(\rho v(\rho) \partial_x (K \ast \rho) +\partial_x \phi_v(\rho) \big), \qquad t\in[0,T],\, x\in \R.
  \end{equation}

  By  $K$ we denote an interaction kernel satisfying the following conditions
  \begin{align}
  	&K(z)=K(|z|);\label{eq:k1}\\
  	&K\in C^0(\R)\cap C^2(\R\setminus\{0\});\label{eq:k2}\\
  	&K,\,K'\,\in L^\infty(\R),\, K''_{|_{\R\setminus\{0\}}} \in L^{\infty}(\R\setminus\{0\});\label{eq:k3}\\
  	&\|K'\|_{L^1(\R)}<\infty,\label{eq:k4}
  \end{align}
by $v \in \mathrm{Lip}([0,\infty), [0,\infty))$ a velocity term preventing the overcrowding effect  and satisfying the following 
\begin{align}
&\| v \|_{\infty} = v(0) < \infty; \label{eq:v1} \\
&v' \leq 0; \label{eq:v2}
\end{align}  
and by $\phi_v$ a $C^1$ function defined by
\begin{equation}
	\label{eq:phivdef}
	\phi_v(s)=\int_0^s\xi W''(\xi)v(\xi)\d\xi,
\end{equation} 
where either
\begin{equation}
	W(\rho)=\rho\log\rho
\end{equation}
or 
\begin{equation}
	W(\rho)=\frac{\rho^m}{m-1},\quad m>1.
\end{equation}
Under the above assumptions on $W$, the function 
\begin{equation}
	\label{eq:defphi}
	\phi(s)=\phi_1(s)=\int_0^s\xi W''(\xi)\d\xi
\end{equation}
satisfies the assumptions in \cite{DRR}. In particular, $\phi$ is given by $\phi(\rho)=\rho^m$ for $m\geq1$.

Notice that $K$ is not assumed to have a definite attractive or repulsive behaviour and might have a Lipschitz singularity at the origin. 

In the case of constant mobility (e.g. $v\equiv1$) the PDE \eqref{eq:mainPDE} can be seen as the gradient flow of the following functional
\begin{equation}
\Fcal(\rho) :=
\frac12\int_{\R}\int_{\R} K(x-y) \rho(x)\rho(y)\dx\dy +\int_{\R}W(\rho(x))\dx.
\end{equation}

Such formulation carries out a naturally induced Lagrangian description in the pseudo-inverse formalism.  This approach was introduced for pure diffusion equations in \cite{CT}; while several numerical properties were studied in \cite{R1,R2} and also in \cite{GT} for the problem coupled with smooth attractive kernels.  A first convergence result for the Lagrangian approximation was obtained in \cite{MO} for nonlinear diffusion equations in bounded domains with zero velocity boundary condition and strictly positive densities with bounded variation.  In \cite{MS}, instead,  the aggregation-diffusion regime is considered and the authors can prove that the deterministic approximating particle scheme converges to a weak solution of the pseudo-inverse equation.  
More recently,  in \cite{DRR},  the authors of this manuscript consider the same particle scheme as in \cite{MS} and obtain a convergence result at the level of the continuity equation for the density for singular nonlocal interaction potentials.  Fundamental novelties of \cite{DRR} are that no restrictions to bounded domains neither the \emph{no-vacuum} condition are required and,  moreover,  the initial datum is not assumed to have bounded variation.  The compactness proved in \cite{DRR} can be achieved exploiting the gradient flow structure of \eqref{eq:mainPDE} when $v\equiv1$.  Indeed, it is possible to show that the energy $\Fcal$ remains uniformly bounded along the many particle limits once the initial datum has itself finite energy,  and such bound provides both the desired $L^1$ compactness and a maximum principle for the macroscopic density. 
A different approach called \emph{blob-method}, still based on deterministic particle scheme,  was introduced in \cite{CCP} to approximate weak solutions of the multidimensional PDE \eqref{eq:mainPDE} for $v=1$, $C^2$ nonlocal interactions and quadratic diffusion.

\vskip 0.2 cm
{Although equations of the type \eqref{eq:mainPDE} are generally
not gradient flows with respect to the standard Wasserstein metric when $v \neq 1$,  they can be interpreted as gradient-flow evolutions of $\Fcal$ with respect to a generalised Wasserstein distance which takes into account the non-linearity.  Such point of view was introduced in \cite{DNS} and \cite{LM} and further investigated in \cite{CLSS}, while some applications of the corresponding gradient-flow structure may be found in the study
of non-linear cross-diffusion systems \cite{BDFPS} and fourth-order equations, as Cahn-Hillard and thin films equations (see \cite{LMS}, \cite{Zin}).
However, being the generalised Wasserstein metric only expressed in a dynamical implicit formulation, it is not easy to provide the corresponding generalization of the Jordan-Kinderlehrer-Otto scheme and construct solutions of \eqref{eq:mainPDE} in this spirit.  The validation of a scalar transport equation with nonlinear mobility,  which can be also regarded as a scalar nonlinear conservation law,  can then be obtained proving the convergence of systems of fully deterministic interacting particles.  A first rigorous result dealing with pure linear drift can be found in \cite{difrancesco-rosini},  while the case of external potentials with symmetries has been considered in  \cite{DFS}.  
A deterministic many-particle limit result for scalar evolutions with non-local interactions was initially performed in \cite{DFR} in the smooth and pure aggregative regime.  In \cite{FT} and \cite{RS} the case of mildly singular non-local interaction kernels (as Morse and Yukawa type ones) is considered.  
The authors of \cite{FT} work directly at the level of the gradient-flow structure at the continuous time-scale exploiting an energy-dissipation balance argument,  while more general time-dependent kernels are included in the analysis in \cite{RS} thanks to the introduction of a new integrated version of the deterministic particle scheme. 
On the other hand, the first deterministic many-particle limit for scalar non-local transport equations as in \eqref{eq:mainPDE} with nonlinear mobility and a diffusive term was considered in \cite{fagioli-radici-1}.  The compactness argument employed in \cite{fagioli-radici-1} resembles the techniques used in \cite{DFR} and it is based on strong $BV$ estimates and a good control on the support of the moving particles.  Since the presence of the diffusive term in \eqref{eq:mainPDE} plays against the latter bound,  the analysis in \cite{fagioli-radici-1} is carried out inside bounded intervals with zero velocity boundary conditions and for initial data enjoying the \emph{no vacuum} condition.   
} 
\vskip 0.2 cm

In this paper we follow the general approach proposed in \cite{DRR} for constant mobility functions, which allows to show a strong convergence of the deterministic particle scheme for general bounded initial data of finite energy on unbounded domains. Namely, we first consider piecewise constant approximations on tori of size $L>0$, then show strong $L^1$-convergence of the scheme for fixed $L$, and finally thanks to the fact that our $L^1$-compactness estimates are independent of $L$ and of lower bounds on the initial density, we can take limits of the scheme as $L\to+\infty$ and recover an approximation procedure on the whole real line. 

As in \cite{DRR}, the $L^1$-compactness is guaranteed by $W^{1,2}$ estimates for the nonlinear term $\phi_v(\rho)$, which are in turn derived from a controlled growth of a suitable energy functional along the deterministic particle flow and an explicit computation of its time derivative along the flow. 

However, the fact that we do not assume either bounds on the BV norm or lower bounds of the initial density lead, in the case of nonconstant mobility, to solutions of a PDE which possibly differs from \eqref{eq:mainPDE} on the vacuum regions for the limit density. This is mainly due to the fact that the distance between subsequent particles of the scheme may not go to zero as the number of particles tends to infinity, due to the presence of a nontrival nonlinear mobility. Moreover, the presence of a nonlinear mobility requires the use of more refined estimates which circumvent the need for BV bounds on the density (or on the mobility function) in the nonlocal part of the equation/energy, in proving both $L^\infty$ and energy bounds. 

The energy we use to derive $W^{1,2}$ bounds in this paper is the following. For any $L>0$, let
\begin{equation}
\Fcal_L^v(\rho):=\frac12\int_{\T_L}\int_{\T_L} K(x-y) \rho(x)\rho(y)\dx\dy +\int_{\T_L}W_v(\rho)\dx,
\end{equation} 
where $W_v:\R\to\R$ is such that
\begin{equation}
	\phi_v(s)=sW_v'(s)-W_v(s).
\end{equation}

We prove the following theorems.

\begin{theorem}\label{thm:linfty}
	Let $\rho_{0,L}\in L^\infty(\T_L)$, $T>0$. Then, there exists a constant $\bar C=\bar C(\Fcal^v_L(\rho_{0,L}),K, \|\rho_{0,L}\|_{L^\infty},\|v\|_{L^\infty},T)$  such that the following holds. There exists $\bar N=\bar N(L,T,\Fcal^v_L(\rho_{0,L}),\|v\|_{L^\infty})$ s.t. the deterministic particle approximations $\{\rho^N_L\}_{N\geq\bar N}$ starting from $\rho_{0,L}$ are well-defined on $[0,T]$ and moreover
	\begin{equation}
		\sup_{N\geq\bar N}\sup_{t\in[0,T)}\|\rho^N_L(t)\|_{L^\infty(\T_L)}\leq \bar C.
	\end{equation}
\end{theorem}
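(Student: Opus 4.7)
The plan is to establish a discrete maximum principle for the densities $\rho^N_L$ uniformly in $N$. Write the particles on $\T_L$ as $x^N_0<\dots<x^N_{N-1}$, cell widths $d^N_i=x^N_{i+1}-x^N_i$, cell densities $\rho^N_i=m_N/d^N_i$ with $m_N=L/N$. Following \cite{DRR} adapted to the nonlinear mobility $v$, each velocity $\dot x^N_i$ splits into a nonlocal piece involving $v(\rho)$ and $K'\ast\rho^N_L$ and a local diffusion piece built from a finite difference of $\phi_v$ on the two cells adjacent to $x^N_i$. Differentiating $\rho^N_i=m_N/d^N_i$ in time gives
\begin{equation*}
  \dot\rho^N_i=-\frac{(\rho^N_i)^2}{m_N}\bigl(\dot x^N_{i+1}-\dot x^N_i\bigr),
\end{equation*}
and by an envelope argument the Lipschitz quantity $M^N(t):=\max_i\rho^N_i(t)$ satisfies $\dot M^N(t)=\dot\rho^N_{i^*(t)}(t)$ a.e.\ at any index $i^*$ realizing the maximum.

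The heart is bounding $\dot x^N_{i^*+1}-\dot x^N_{i^*}$. Since $\rho^N_{i^*\pm1}\leq M^N$ and $\phi_v'(s)=sW''(s)v(s)\geq 0$ makes $\phi_v$ nondecreasing, the local contribution is nonnegative and, in the \emph{generic} case $\rho^N_{i^*\pm 1}\leq(1-\eta)M^N$, is bounded below by a quantity of order $\phi_v(M^N)/d^N_{i^*}$. The nonlocal contribution is bounded from above by splitting $(K'\ast\rho^N_L)(x^N_{i^*+1})-(K'\ast\rho^N_L)(x^N_{i^*})$ into a near-field part controlled by $\|K'\|_\infty$ times the $O(m_N)$ mass in the few cells around $x^N_{i^*}$ and a far-field part controlled by $\|K''\|_{L^\infty(\R\setminus\{0\})}\,d^N_{i^*}\,\|\rho_{0,L}\|_{L^1}$, plus a term of order $\mathrm{Lip}(v)\,M^N\,\|K'\|_\infty\|\rho_{0,L}\|_{L^1}$ from the mismatch of $v$ across $i^*$ and $i^*+1$. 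Substituting yields an ODI of the form
\begin{equation*}
  \dot M^N(t)\leq C_1 M^N+C_2(M^N)^2+C_3\frac{(M^N)^3}{m_N}-c\,\frac{(M^N)^3\phi_v(M^N)}{m_N^2},
\end{equation*}
with constants depending only on $K,v,\|\rho_{0,L}\|_{L^1}$. Since $\phi_v(s)\gtrsim s$ for $s$ large in both the log-entropy and the $\rho^m$ cases, and $m_N\to 0$, the negative term dominates for $N$ large as soon as $M^N$ exceeds a threshold depending only on the listed data.

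The nontrivial case is the \emph{thick plateau}, where $\rho^N_{i^*\pm 1}$ lie too close to $M^N$ for the one-cell lower bound on the local flux to be useful. I would handle this on the maximal block $J^*\ni i^*$ of indices with $\rho^N_j\geq(1-\eta)M^N$: mass conservation $|J^*|\,m_N\leq\|\rho_{0,L}\|_{L^1}$ forces the plateau to be short when $M^N$ is large, and monotonicity of $\Fcal^v_L$ along the flow (a gradient-flow-type feature that survives in the nonlinear-mobility setting) gives integral control on $\rho^N_L$ outside $J^*$, so that $\phi_v$ has a quantitative drop across the boundary of $J^*$. Propagating the diffusive flux from the endpoints of $J^*$ inward recovers a net dissipation of the same order, with a prefactor depending on $\Fcal^v_L(\rho_{0,L})$, and brings the analysis back to an ODI of the same type.

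Running Gronwall over $[0,T]$ yields $M^N(t)\leq\bar C=\bar C(\Fcal^v_L(\rho_{0,L}),K,\|\rho_{0,L}\|_\infty,\|v\|_\infty,T)$, with the $T$-dependence coming from the sub-threshold Gronwall step, starting from $M^N(0)\leq\|\rho_{0,L}\|_\infty$. The threshold $\bar N$ is then the one ensuring that $d^N_i\geq m_N/\bar C>0$ on $[0,T]$, so that the ODE system for $(x^N_i)$ is well-defined up to time $T$. The main obstacle is the plateau analysis: one must quantitatively propagate the diffusive flux from the boundary of a near-maximal plateau to all its interior cells using only integral (not $L^\infty$) control of the density outside the plateau, since the $L^\infty$ bound is precisely what is being proven.
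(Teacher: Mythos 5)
There is a genuine gap, and it sits exactly where you locate it: the plateau case. Your strategy is a pointwise-in-time maximum principle in which the discrete diffusion at the maximal index $i^*$ must beat a quadratic-in-$M^N$ growth coming from the nonlocal term (note, incidentally, that the nonlocal contribution to $\dot\rho_{i^*}$ is at worst $C_1M^N+C_2(M^N)^2$ once you use $d^N_{i^*}=m_N/\rho^N_{i^*}$; the $(M^N)^3/m_N$ term should not appear, but the quadratic term alone already causes finite-time blow-up under Gronwall, so the dissipation is genuinely needed). When $\rho^N_{i^*\pm1}$ equal $M^N$ the second difference $\phi_v(\rho^N_{i^*+1})-2\phi_v(\rho^N_{i^*})+\phi_v(\rho^N_{i^*-1})$ vanishes and there is \emph{no} dissipation at that instant; $\dot M^N(t)$ depends only on the instantaneous configuration in the two cells adjacent to $x^N_{i^*}$, so ``propagating the diffusive flux from the endpoints of $J^*$ inward'' has no meaning at the level of the ODI for $M^N(t)$ --- it would require a time-integrated (parabolic regularization) argument that your sketch does not supply. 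Mass conservation making the plateau short does not help: a plateau of two cells already kills the single-cell lower bound. Finally, the ``monotonicity of $\Fcal^v_L$ along the flow'' you invoke is not available here: Lemma~\ref{lemma:decrease} only gives an approximate dissipation inequality with error terms of order $L/\sqrt N$ and a term linear in the discrete gradient, not exact monotonicity.

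The paper takes an essentially opposite route that avoids the plateau issue altogether. At the maximum it only uses the \emph{sign} of the diffusion term ($G_{k+1}-G_k\le 0$) and discards it; the dangerous quadratic term $\phi_v^2(\rho_k)$ is then controlled not by dissipation at $k$ but by writing, via the triangle and Cauchy--Schwarz inequalities, $\phi_v^2(\rho_k)\lesssim \phi_v^2(\rho_{k_0})+N\sum_j(\phi_v(\rho_j)-\phi_v(\rho_{j-1}))^2$ for a pigeonhole index $k_0$ with $\rho_{k_0}\le c_L/L$ (such a cell always exists since $\sum_j(x_{j+1}-x_j)=L$). The resulting forcing term is integrable in time uniformly in $N$ by the space-time $W^{1,2}$ bound of Corollary~\ref{cor:energybound}, which is itself a consequence of the approximate energy dissipation of Lemma~\ref{lemma:decrease}; Gronwall then closes the argument for $\phi_v(\rho_k)$, and a separate two-case discussion ($\phi_v$ unbounded versus $\phi_v$ bounded, in which case $sv(s)$ is bounded and the ODI for $\rho_k$ becomes linear) converts this into the $L^\infty$ bound on $\rho^N_L$. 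If you want to salvage your approach you would need a quantitative replacement for the plateau dissipation; the paper's global energy estimate is precisely the tool that substitutes for it.
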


\begin{theorem}\label{thm:convpdelfixed} Let $L>0$, $\rho_{0,L}\in L^\infty(\T_L)$, $T>0$. Then the deterministic particle approximations $\{\rho^N_L\}_{N\geq\bar N}$ starting from $\rho_{0,L}$ converge in $L^1$ as $N\to\infty$ to a solution $\rho_L$ of the following PDE in weak form
		\begin{align}
			\int_0^T\int_{\T_L}&\rho_L(t,x)\partial_t\varphi(t,x)-\rho_L(t,x)K'\ast\rho_L(t,x)v(\rho_L(t,x))\partial_x\varphi(t,x)+{\Phi}_{v,L}(t,x)\partial_{xx}\varphi(t,x)\dx\dt=0\label{eq:pdel}
		\end{align}
	where $\Phi_{v,L}(t,x)=\phi_v(\rho_L(t,x))$ for a.e. $x\in\{\rho_L(t)>0\}$.
\end{theorem}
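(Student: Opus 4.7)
My plan follows the compactness-and-passage-to-the-limit framework of \cite{DRR}, adapted to accommodate the nonlinear mobility $v$. The four main steps are (i) uniform estimates on $\phi_v(\rho^N_L)$, (ii) strong $L^1$ compactness via Aubin--Lions, (iii) passage to the limit in the nonlocal drift, and (iv) identification of the nonlinear diffusion flux; step (i) is the core technical novelty compared to the constant-mobility case.

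\textbf{Uniform estimates.} Theorem~\ref{thm:linfty} already gives $\|\rho^N_L(t)\|_{L^\infty(\T_L)}\leq \bar C$ uniformly in $N\geq \bar N$ and $t\in[0,T]$. To upgrade this to spatial regularity of the nonlinear composition $\phi_v(\rho^N_L)$, I compute $\tfrac{d}{dt}\Fcal^v_L(\rho^N_L)$ along the particle ODEs. The identity $\phi_v(s)=sW_v'(s)-W_v(s)$ is precisely the Legendre duality so that the dissipation takes the weighted-Wasserstein form $\int\rho^N_L v(\rho^N_L)\bigl|\partial_x(W_v'(\rho^N_L)+K\ast\rho^N_L)\bigr|^2\dx$ modulo discretization errors. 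Differentiating $\phi_v'(s)=sW_v''(s)=sW''(s)v(s)$ lets one rewrite the diffusive part of the dissipation as a weighted $L^2$-norm of $\partial_x\phi_v(\rho^N_L)$; combined with Theorem~\ref{thm:linfty}, $v\leq\|v\|_\infty$, and the nonlocal cross-term controlled via \eqref{eq:k3}--\eqref{eq:k4}, this delivers a uniform bound of $\phi_v(\rho^N_L)$ in $L^2(0,T;H^1(\T_L))$.

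\textbf{Compactness and passage to the limit in the drift.} The discrete continuity equation implies that $\partial_t\rho^N_L$ is bounded in $L^2(0,T;H^{-1}(\T_L))$, because the approximate flux splits as a uniformly $L^\infty$-bounded drift $\rho^N_L v(\rho^N_L)K'\ast\rho^N_L$ plus the $L^2$-bounded $\partial_x\phi_v(\rho^N_L)$. An Aubin--Lions argument extracts a subsequence with $\rho^N_L\to\rho_L$ strongly in $L^1((0,T)\times\T_L)$ and a.e.; the $L^\infty$ bound upgrades this to convergence in every $L^p$ with $p<\infty$. Assumptions \eqref{eq:k3}--\eqref{eq:k4} then give $K'\ast\rho^N_L\to K'\ast\rho_L$ uniformly, and continuity of $v$ combined with dominated convergence yields $v(\rho^N_L)\to v(\rho_L)$ in $L^p$, so the first-order term of \eqref{eq:pdel} passes to the limit in $L^1$.

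\textbf{Diffusive flux and main obstacle.} The $H^1$-bound of step (i) gives, up to a further subsequence, $\phi_v(\rho^N_L)\rightharpoonup \Phi_{v,L}$ weakly in $L^2_tH^1_x$ and $\ast$-weakly in $L^\infty$. Combining a.e.\ convergence of $\rho^N_L\to\rho_L$ with continuity of $\phi_v$ identifies $\Phi_{v,L}=\phi_v(\rho_L)$ a.e.\ on $\{\rho_L>0\}$; the restriction to the positivity region accounts for the fact that on vacuum regions isolated surviving particles may carry piecewise constant densities bounded away from zero on cells of vanishing Lebesgue measure, potentially producing a weak-$\ast$ limit of $\phi_v(\rho^N_L)$ that does not equal $\phi_v(0)=0$, and since \eqref{eq:pdel} is written against $\partial_{xx}\varphi$, working with $\Phi_{v,L}$ rather than $\phi_v(\rho_L)$ is exactly what the limit procedure delivers. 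The crux of the whole argument lies in step (i): because $v\neq 1$ the PDE is not a gradient flow in the standard Wasserstein metric, so the dissipation computation must be performed directly at the particle level with careful bookkeeping of the nonlocal cross-terms, exploiting only the Lipschitz control on $v$ and the $L^\infty$ bound from Theorem~\ref{thm:linfty}. This is precisely what allows us to dispense with the BV and no-vacuum assumptions of \cite{fagioli-radici-1}.
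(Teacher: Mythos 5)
Your overall strategy (energy dissipation $\Rightarrow$ compactness $\Rightarrow$ passage to the limit) matches the paper's, but there is a genuine gap: you pass to the limit in a PDE that the scheme does not actually satisfy. By Proposition~\ref{prop:approxpde} the flux of $\rho^N_L$ is $\rho^N_L\,{K^\lin}'(\rho^N_L,v^N_L)\ast\rho^N_L+\frac{N}{c_L}\rho^N_L\phi_v^{\mathrm{lin}}(\rho^N_L)$, not $\rho^N_L v(\rho^N_L)K'\ast\rho^N_L+\partial_x\phi_v(\rho^N_L)$ (the latter does not even exist pointwise, since $\phi_v(\rho^N_L)$ is piecewise constant). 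Your step (iii) therefore needs, and does not contain, the estimates showing that $(i)$ the interpolated kernel ${K^\lin}'(\rho^N_L,v^N_L)\ast\rho^N_L$ can be replaced by $K'\ast\rho^N_L\,v^N_L$, and $(ii)$ the discrete mobility $v^N_L=v(\max(\rho_k,\rho_{k-1}))$ can be replaced by $v(\rho_L)$. The second point is delicate precisely because of the vacuum: where $\rho_L(t,x)>0$ one shows the cell widths $x_{k+1}-x_k\to0$ so that $v^N_L\to v(\rho_L)$ pointwise a.e., while on $\{\rho_L=0\}$ the limits of $v^N_L$ and $v(\rho^N_L)$ may differ and one must use the prefactor $\rho^N_L\to0$ to kill the discrepancy. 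This occupies most of Step~1 of the paper's proof and cannot be absorbed into ``dominated convergence plus Lipschitzianity of $v$''. Likewise, the diffusive term must be handled by a discrete summation by parts moving one derivative onto $\varphi$, which is how $\Phi_{v,L}$ is actually constructed; your step (iv) assumes the flux is already $\partial_x\phi_v(\rho^N_L)$ and so never produces the object $\Phi_{v,L}$.

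Two further points. First, your explanation of why $\Phi_{v,L}$ may differ from $\phi_v(\rho_L)$ on $\{\rho_L=0\}$ is not the right mechanism: the paper proves (Theorem~\ref{thm:compactness}) that $\phi_v(\rho^N_L)\to\phi_v(\rho_L)$ \emph{strongly} in $L^1$, everywhere including vacuum, so the weak-$\ast$ limit of $\phi_v(\rho^N_L)$ is $\phi_v(\rho_L)$ and ``isolated cells of vanishing measure with density bounded below'' contribute nothing. The discrepancy arises instead because on vacuum the inter-particle distances $x_{k+1}-x_k$ need \emph{not} shrink, and the discrete primitive obtained by summation by parts retains contributions from the neighbouring non-vacuum cells through the interpolation term $\bigl(\frac{N}{c_L}M_{\rho^N_L}-k\bigr)(G_{k+1}-G_k)$. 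Second, the dissipation estimate (Corollary~\ref{cor:energybound}) is a discrete $W^{1,2}$ bound in the mass variable $z$, which the paper converts into a $TV$ bound in $x$ and feeds into the Rossi--Savar\'e generalized Aubin--Lions theorem with the $1$-Wasserstein pseudo-distance; your claimed $L^2(0,T;H^1(\T_L))$ bound for the piecewise constant $\phi_v(\rho^N_L)$ is not literally meaningful and the classical $H^{-1}$ Aubin--Lions route, while salvageable via interpolants, is not what the uniform estimates directly provide.
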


\begin{theorem}\label{thm:convinl}
	Consider initial data $\rho_0\in L^1(\R)\cap L^\infty(\R)$ with $\int_{\R}|x|\rho_0(x)\dx<+\infty$ and define $\rho_{0,L}$ by cutting $\rho_0$ on $[-L/2,L/2)$ and extending it periodically. Then the functions $\rho_L\chi_{[-L/2,L/2)}$, being $\rho_L$ limits of the deterministic particle approximations starting from $\rho_{0,L}$ on $\T_L$ found in Theorem~\ref{thm:convpdelfixed}, converge (up to subsequences) in $L^1([0,T]\times\R)$ to a weak solution $\rho$ of the PDE in weak form
		\begin{align}
		\int_0^T\int_{\R}&\rho(t,x)\partial_t\varphi(t,x)-\rho(t,x)K'\ast\rho(t,x)v(\rho(t,x))\partial_x\varphi(t,x)+{\Phi}_v(t,x)\partial_{xx}\varphi(t,x)\dx\dt=0,
	\end{align}
where ${\Phi}_v(t,x)=\phi_v(\rho(t,x))$ for a.e. $x\in \{\rho(t)>0\}$. 
\end{theorem}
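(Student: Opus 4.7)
The plan is to combine the $L$-uniform bounds from Theorems~\ref{thm:linfty} and \ref{thm:convpdelfixed} with a tightness argument based on the first-moment hypothesis, then pass to the limit in the weak formulation using compactly supported test functions. First, I would verify that the hypotheses of the previous two theorems hold with constants independent of $L$. Since $\rho_{0,L}$ is the periodic extension of $\rho_0|_{[-L/2,L/2)}$, one has $\|\rho_{0,L}\|_{L^\infty(\T_L)} \le \|\rho_0\|_{L^\infty(\R)}$, $\|\rho_{0,L}\|_{L^1(\T_L)} \le \|\rho_0\|_{L^1(\R)}$, and
\begin{equation*}
\Fcal^v_L(\rho_{0,L}) \le \tfrac12\|K\|_\infty \|\rho_0\|_{L^1(\R)}^2 + \int_{\T_L} W_v(\rho_{0,L})\dx,
\end{equation*}
where the last integral is uniformly bounded in $L$ because $W_v$ is continuous, vanishes at $0$, and satisfies $|W_v(s)| \le C\, s(1+|\log s| + s^{m-1})$ under the hypotheses on $v$ and $W$. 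Thus $\sup_L \Fcal^v_L(\rho_{0,L}) < \infty$, and \Cref{thm:linfty} yields the uniform bound $\sup_L \sup_{t\in[0,T]} \|\rho_L(t)\|_{L^\infty} \le \bar C$, while mass conservation of the scheme gives $\|\rho_L(t)\|_{L^1(\T_L)} \le \|\rho_0\|_{L^1(\R)}$.

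Next, I would extract compactness for $\{\rho_L \chi_{[-L/2,L/2)}\}$ in $L^1([0,T]\times\R)$. Because the $L^1$-compactness estimates used to prove \Cref{thm:convpdelfixed} are $L$-independent (coming from $L$-independent $W^{1,2}$ bounds on $\phi_v(\rho^N_L)$ and companion time-regularity bounds from the equation), after passing to the limit in $N$ they yield local $L^1$-compactness of $\{\rho_L\}$ on each strip $[0,T]\times I$ with $I \subset \R$ compact. To prevent escape of mass at infinity, I would propagate the first moment along the particle scheme: the velocity of each particle is bounded by $\|v\|_\infty\|K'\|_\infty\|\rho_0\|_{L^1(\R)}$ plus a uniformly bounded contribution from the diffusive flux (thanks to the $L^\infty$ bound on $\rho_L$ and continuity of $\phi_v'$), whence
\begin{equation*}
\int_\R |x|\, \rho_L(t,x)\chi_{[-L/2,L/2)}(x)\dx \le \int_\R |x|\rho_0(x)\dx + C\, T\, \|\rho_0\|_{L^1(\R)}
\end{equation*}
uniformly in $L$. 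By Chebyshev this gives tightness at infinity, and combined with local compactness one obtains, along a subsequence, strong convergence $\rho_{L_k}\chi_{[-L_k/2,L_k/2)} \to \rho$ in $L^1([0,T]\times\R)$.

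Finally, I would pass to the limit in the weak equation \eqref{eq:pdel}. Given $\varphi \in C^\infty_c([0,T)\times\R)$ choose $L$ so large that $\operatorname{supp}\varphi \subset [0,T)\times(-L/4,L/4)$; then $\varphi$ descends to a valid test function on $\T_L$. The $\partial_t\varphi$ term passes by strong $L^1$ convergence of $\rho_L$. For the convolution term, $K'\in L^1(\R)$ together with the uniform $L^1$ bound on $\rho_L$ imply that the periodic convolution differs from $K'\ast(\rho_L\chi_{[-L/2,L/2)})$ by a quantity of order $\|K'\|_{L^1(\R\setminus[-L/2,L/2])}\|\rho_0\|_{L^1(\R)}\to 0$; together with strong $L^1$ convergence of $\rho_L$, continuity of $v$, and the uniform $L^\infty$ bound this yields the full nonlocal limit. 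For the diffusion term, continuity of $\phi_v$ and strong $L^1$ convergence give $\Phi_{v,L_k}\to \phi_v(\rho)$ in $L^1$, which equals $\Phi_v$ on $\{\rho>0\}$ as required.

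The main obstacle is the tightness step together with the passage from the periodic convolution on $\T_L$ to the real-line convolution on $\R$. The former requires a careful propagation of moments through the deterministic scheme, using the $L^\infty$ control on the velocity derived from the nonlocal drift and the diffusive flux; the latter relies essentially on $K'\in L^1(\R)$ to neutralise the contribution of the periodic copies outside $[-L/2,L/2]$. The identification of the nonlinear term on the vacuum region is conveniently avoided by the statement, which only demands $\Phi_v=\phi_v(\rho)$ on $\{\rho>0\}$, consistently with the situation already encountered in \Cref{thm:convpdelfixed}.
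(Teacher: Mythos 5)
Your overall route coincides with the paper's: uniform-in-$L$ bounds on the truncated data and on the energy, strong $L^1([0,T]\times\R)$ compactness of $\rho_L\chi_{[-L/2,L/2)}$ (this is exactly Theorem~\ref{thm:compactness2}), and passage to the limit in the weak formulation on $\T_L$ with the periodic-convolution tails neutralised by $K'\in L^1(\R)$. The one step where your argument as written breaks down is the tightness/moment-propagation step. You claim that the diffusive contribution to the particle velocity is ``uniformly bounded\dots thanks to the $L^\infty$ bound on $\rho_L$ and continuity of $\phi_v'$''. This is not true: that contribution is $\frac{N}{c_L}\bigl(\phi_v(\rho_k)-\phi_v(\rho_{k-1})\bigr)$, and the factor $N/c_L$ is not compensated by any pointwise smallness of the increments, since no $BV$ bound on the density is assumed or available. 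What is true, and what the paper actually uses, is that this quantity is bounded in $L^2$ of $(t,z)$ uniformly in $N$ and $L$ (Corollary~\ref{cor:energybound}); feeding this into Lemma~\ref{lemma:g} via Cauchy--Schwarz in time gives the $1/2$-H\"older estimate \eqref{eq:timecont} for the $1$-Wasserstein distance, and Remark~\ref{rem:lambda} then converts the finite first moment of $\rho_0$ into tightness of the approximations uniformly in $N$ and $L$. Your conclusion is correct, but the mechanism must be rerouted through the time-integrated $W^{1,2}$ estimate rather than a pointwise velocity bound.

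Two smaller points. First, your single estimate of the convolution tails by $\|\rho_L\|_{L^\infty}\int_{\{|z|>L/4\}}|K'(z)|\dz$ is a legitimate simplification of the paper's splitting into $J^L_1$ and $J^L_2$: the paper controls the nearest periodic copies by tightness, whereas the uniform $L^\infty$ bound suffices once $\mathrm{supp}\,\varphi\subset(-L/4,L/4)$. Second, the claim that $\Phi_{v,L_k}\to\phi_v(\rho)$ in $L^1$ is an overstatement on the vacuum set, where $\Phi_{v,L}$ need not coincide with $\phi_v(\rho_L)$; as you note yourself, only the identification on $\{\rho>0\}$ is required, and there it follows from a.e. convergence of $\rho_L$ along a subsequence together with dominated convergence, which is also all the paper establishes.
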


Our main improvements w.r.t. the existing literature are the following: we consider general mobilities, with no assumptions on their rate of decay; we consider general Lipschitz  kernels, with possibly mixed attractive-repulsive behaviour; we assume neither BV bounds nor lower bounds  on the initial densities. 

The absence of a minimum principle in case of nonconstant mobilities determines the presence of the terms ${\Phi}_{v,L}$ and ${\Phi}_{v}$ on the vacuum regions $\{\rho_L=0\}$ and $\{\rho=0\}$. 

Regarding the interest for kernels with mixed attractive-repulsive behaviour satisfying our assumptions we recall the two-Yukawa kernel considered in~\cite{DR2}, which is used in biology to model pattern formation in colloidal systems 
\begin{equation}\label{eq:Yuk}
	K(z)=-\beta^2e^{-\beta |z|}+e^{-|z|},\quad \beta\gg1.
\end{equation}The $\Gamma$-limit of such a functional as $\beta\to+\infty$ (namely with local attractive term) has been characterized in~\cite{DR2}. 
In suitable regimes,  minimizers of the limit functionals have been proved to be given by periodic unions of stripes (i.e., intervals in one dimension,  see~\cite{DR2}) with techniques developed in~\cite{GR, DR1,  DR2, DKR, Ker, DR3, DR4, DR5, DR6}. For characterization of minimizers with power law attractive-repulsive potentials see~\cite{ChTo}.

\section{Preliminary facts}

  

  \subsection{Deterministic particle approximation}\label{sec:det}
  
  Our goal is to approximate~\eqref{eq:mainPDE} with a moving particle approximation on a series of increasing tori.
  
  Let us fix $L>0$, $N\in\N$ and $x_0\in\T_L$.

Given $\rho_{0,L}\in L^\infty(\T_L)$ with $\int_{\T_L}\rho_{0,L}=c_L\leq1$ and $\rho_{0,L}\geq0$, for all $k=1,\dots,N$ define (for the ordering identify $\T_L$ with $[-L/2,L/2]$ and set $x_0=-L/2$)
  \begin{equation*}
  x_k=\sup\Big\{x:\int_{x_{k-1}}^x\rho_{0,L}(y)\dy<\frac{c_L}{N}\Big\}.
  \end{equation*}
  Notice that $x_0<\cdots<x_{N-1}$  and $x_N=L/2=x_0+L$ (resp. $x_N=x_0$ on $\T_L$).

  For every $k=0,\dots,N$ define the following system of ODEs on $\T_L$
  \begin{align}\label{eq:ode}
  \dot x_k(t)=&-\frac{c_L }{N} v_k(t)\sum_{j\neq k}K'(x_k(t)-x_j(t))-\frac{ N}{c_L}G_k(t),
  \end{align}
  where
  \begin{align*}
  \rho_k(t)&=\frac{c_L}{N(x_{k+1}(t)-x_k(t))}, \notag\\
   G_k(t) &= \phi_v(\rho_k(t))-\phi_v(\rho_{k-1}(t)),\notag\\
    v_k(t) &=v(\max(\rho_k(t),\rho_{k-1}(t)))\label{eq:v_k}
\end{align*}
  and with initial conditions
  \begin{equation*}
  x_k(0)=x_k,
  \end{equation*}
  as long as $x_0(t)<\cdots<x_{N-1}(t)$. 
In the above the dependence on $N,L$ is omitted since clear from the context. Notice that, since $v$ is nonincreasing, 
\begin{equation}\label{eq:vk}
v_k(t)=\min(v(\rho_k(t)),v(\rho_{k-1}(t))).
\end{equation}Then define the deterministic particle approximations starting from $\rho_{0,L}$ as the piecewise constant functions 
  \begin{equation}\label{eq:rhonl}
  \rho^N_L(t,x)=\sum_{k=0}^{N-1}\rho_k(t)\chi_{[x_k(t),x_{k+1}(t))}(x).
  \end{equation}
  We say that the deterministic particle approximation $\rho^N_L$ is well defined on $[0,T)$ provided the relation $x_0(t)<\dots<x_{N-1}(t)$ holds for all $t\in[0,T)$. 
  Notice that, despite of the above definition of starting point of the deterministic particle approximation, $\rho_{0,L}\neq\rho^N_L(0)$ {but $\rho^N_L(0)\to\rho_{0,L}$ in $L^1(\T_L)$ as $N\to\infty$}.
  
  Let $M_{\rho^N_L(t)}:\T_L\to[0,c_L]$ be the cumulative distribution function of $\rho^N_L(t,\cdot)$, namely
  \[
  M_{\rho^N_L(t)}(x)=\int_{x_0}^x \rho^N_L(t,y)\dy
  \]
   and $X(t,\cdot):[0,c_L]\to\T_L$ its pseudoinverse function, i.e.  
   \begin{equation}\label{eq:pseudoinv}
   	X(t,z)=\sup\Big\{x:\int_{x_0}^x\rho^N_L(t,y)\dy<z\Big\}.
   	\end{equation}
   Notice that by definition $x_k(t)=X(t,kc_L/N)$. 
   For simplicity of notation, let us also define 
   \begin{equation}
   	\label{eq:vnl}
   	v^N_L(t,x)=\sum_{k=0}^{N-1}v_k(t)\chi_{[x_k(t),x_{k+1}(t))}(x).
   \end{equation}
      Let moreover 
   \begin{align}
   	{K^\lin}'&(\rho^N_L,v^N_L)(t,x-y)=\sum_{k=0}^{N-1}\chi_{[kc_L/N,(k+1)c_L/N)}(M_{\rho^N_L(t)}(x))\sum_{j=0}^{N-1}\chi_{[jc_L/N,(j+1)c_L/N)}(M_{\rho^N_L(t)}(y))\cdot\notag\\
   	&\cdot\Bigl[(1-\chi_{\{0\}}(x_j(t)-x_k(t)))v_k{K}'(x_k(t)-x_{j}(t))\notag\\
   	&+\Bigl(\frac{N}{c_L}M_{\rho^N_L(t)}(x)-k\Bigr)(1-\chi_{\{0\}}(x_j(t)-x_{k+1}(t)))v_{k+1}K'(x_{k+1}(t)-x_j(t))\notag\\
   	&-\Bigl(\frac{N}{c_L}M_{\rho^N_L(t)}(x)-k\Bigr)(1-\chi_{\{0\}}(x_j(t)-x_k(t)))v_kK'(x_{k}(t)-x_j(t))\Bigr].\label{eq:Klin}
   \end{align}
and \begin{align}
	\phi_v^{\mathrm{lin}}(\rho^N_L(t,x))&=\sum_k\chi_{[x_k(t),x_{k+1}(t))}(x)\Bigl[\bigl(\phi_v(\rho_k)-\phi_v(\rho_{k-1})\bigr)\notag\\
	&+\Bigl(\frac{ N}{c_L}M_{\rho^N_L(t)}(x)-k\Bigr)\bigl( (\phi_v(\rho_{k+1})-\phi_v(\rho_{k})) - (\phi_v(\rho_{k})-\phi_v(\rho_{k -1}))\bigr) \Bigr]\Big).
\end{align}

One has the following proposition, which extends Proposition 2.1 in \cite{DRR} to the deterministic particle approximation for aggregation-diffusion equations with nonlinear mobility.
\begin{proposition}\label{prop:approxpde} Let $T>0$ be such that $\rho^N_L$ is well-defined on $[0,T)$. Then, $\rho^N_L$ is a weak solution of the following PDE on $[0,T)\times \T_L$
	\begin{align}\label{eq:pdeapprox}
		\partial_t\rho^N_L = & \partial_x\Big(\rho^N_L {K^\lin}'(\rho^N_L, v^N_L)\ast\rho^N_L+\frac{ N}{c_L}\rho^N_L\phi_v^{\mathrm{lin}}(\rho^N_L)\Big).
	\end{align}

The PDE \eqref{eq:pdeapprox} is satisfied in the following sense: for all $\varphi\in Lip([0,T)\times\T_L)\cap C_0([0,T)\times\T{_L})$ it  holds 
	\begin{align}
		\int_0^T\int_{\T_L}\rho^N_L(t,x)\partial_t\varphi(t,x)&\dx\dt=-\int_{\T_L}\varphi(0,x)\rho^N_L(0,x)\dx\notag\\
		&-\int_0^T\int_{\T^L}\partial_x\varphi(t,x)\rho^N_L(t,x){K^\lin}'(\rho^N_L, v^N_L)(t,\cdot)\ast\rho^N_L(t,x)\dx\dt\notag\\
		&-\int_0^T\int_{\T_L}\partial_x\varphi(t,x)\frac{ N}{c_L}\rho^N_L(t,x)	\phi_v^{\mathrm{lin}}(\rho^N_L(t,x))\dx\dt.
	\end{align} 
\end{proposition}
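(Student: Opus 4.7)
The strategy is to verify the weak formulation by differentiating in time the piecewise-constant ansatz cell by cell, exploiting the structural identity $\rho_k(t)(x_{k+1}(t)-x_k(t)) \equiv c_L/N$ built into the scheme. Differentiating this identity gives $\dot{\rho}_k = -\rho_k (\dot{x}_{k+1} - \dot{x}_k)/(x_{k+1} - x_k)$, which is the only dynamical relation needed beyond the ODE \eqref{eq:ode} itself. Fix a Lipschitz test function $\varphi$ with $\varphi(T,\cdot)=0$ and decompose
\[
\int_{\T_L} \rho^N_L(t,x)\, \varphi(t,x)\dx = \sum_{k=0}^{N-1}\int_{x_k(t)}^{x_{k+1}(t)} \rho_k(t)\,\varphi(t,x)\dx.
\]

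First I would apply Leibniz's rule cell by cell. After summation in $k$ this produces three groups of contributions: (i) the desired $\int_{\T_L}\rho^N_L\partial_t\varphi\dx$; (ii) a telescoped boundary contribution $\sum_k (\rho_{k-1} - \rho_k)\dot{x}_k\varphi(t,x_k(t))$, obtained after relabelling the upper-endpoint terms $\rho_k\dot{x}_{k+1}\varphi(x_{k+1})$ via $k\mapsto k-1$; and (iii) an interior term $-\sum_k \rho_k(\dot{x}_{k+1} - \dot{x}_k)/(x_{k+1} - x_k)\int_{x_k}^{x_{k+1}}\varphi\dx$ arising from $\dot{\rho}_k$.

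The core algebraic identity to prove next is that on each cell $[x_k(t), x_{k+1}(t))$,
\begin{equation*}
\rho^N_L\bigl({K^\lin}'(\rho^N_L, v^N_L)\ast\rho^N_L\bigr) + \frac{N}{c_L}\rho^N_L\,\phi_v^{\lin}(\rho^N_L) = -\rho_k\Bigl[\dot{x}_k + \frac{x-x_k}{x_{k+1}-x_k}(\dot{x}_{k+1} - \dot{x}_k)\Bigr].
\end{equation*}
This is established by computing the convolution explicitly: on $[x_k, x_{k+1})$ one has $(N/c_L)M_{\rho^N_L}(x) - k = (x-x_k)/(x_{k+1}-x_k)$, and the indicator factors $(1 - \chi_{\{0\}})$ in \eqref{eq:Klin} precisely exclude the diagonal self-interactions $j\in\{k, k+1\}$, so that ${K^\lin}'\ast\rho^N_L$ is the affine interpolation on $[x_k,x_{k+1})$ of the discrete velocity $(c_L/N)v_k\sum_{j\neq k}K'(x_k - x_j)$ at $x_k$ and of its analogue at $x_{k+1}$. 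A parallel direct computation exhibits $(N/c_L)\phi_v^{\lin}(\rho^N_L)$ as the affine interpolation of $(N/c_L)G_k$ and $(N/c_L)G_{k+1}$. Summing and invoking the ODE \eqref{eq:ode} then yields the claimed identity.

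Finally, I would multiply this cellwise identity by $\partial_x\varphi$ and integrate by parts on each cell $[x_k(t), x_{k+1}(t))$. The boundary contributions from this integration by parts, after the same $k\mapsto k-1$ shift as above, exactly reconcile the telescoped term (ii), while the interior integral produced by differentiating the linear interpolation $(x-x_k)/(x_{k+1}-x_k)$ exactly reconciles the term (iii). Integrating in time over $[0,T]$ and using $\varphi(T,\cdot) = 0$ then produces the initial datum $-\int_{\T_L}\varphi(0,x)\rho^N_L(0,x)\dx$, yielding the required weak formulation. The main obstacle is purely combinatorial bookkeeping: the explicit evaluation of ${K^\lin}'\ast\rho^N_L$ in terms of the asymmetric differences $G_k = \phi_v(\rho_k) - \phi_v(\rho_{k-1})$ appearing in the ODE, together with the careful verification that the telescoping and integration-by-parts cancellations close exactly, leaving no residual interior or boundary terms.
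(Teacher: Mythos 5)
Your argument is correct, and it reaches the weak formulation by a genuinely different route from the paper's. The paper works in Lagrangian (pseudo-inverse) coordinates: it substitutes $x=X(t,z)$, integrates by parts in time using that $t\mapsto X(t,z)$ is $C^1$, and then reads the flux off the explicit formula \eqref{eq:dtx} for $\partial_t X(t,z)$ obtained by differentiating the interpolation \eqref{eq:xtz} and inserting the ODE \eqref{eq:ode}; the objects ${K^\lin}'$ and $\phi_v^{\mathrm{lin}}$ are defined precisely so that, after changing variables back via $z=M_{\rho^N_L(t)}(x)$, the term $-\partial_t X(t,M_{\rho^N_L(t)}(x))$ becomes ${K^\lin}'\ast\rho^N_L+\tfrac{N}{c_L}\phi_v^{\mathrm{lin}}$. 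You instead stay Eulerian: Leibniz cell by cell, the mass identity $\rho_k(x_{k+1}-x_k)=c_L/N$ to express $\dot\rho_k$, telescoping of the endpoint contributions on the torus, and a spatial integration by parts of the affine flux on each cell. Both computations rest on the same core identity --- on $[x_k,x_{k+1})$ the flux equals $-\rho_k$ times the affine interpolant of $\dot x_k$ and $\dot x_{k+1}$ with weight $\tfrac{N}{c_L}M_{\rho^N_L(t)}(x)-k=\tfrac{x-x_k}{x_{k+1}-x_k}$ --- and your direct evaluation of the convolution is sound (the factors $1-\chi_{\{0\}}$ in \eqref{eq:Klin} remove exactly the self-interaction because the particles are distinct), so the telescoped boundary terms and the $\dot\rho_k$ contribution do cancel exactly against the cellwise integration by parts, the interpolant having constant slope $(\dot x_{k+1}-\dot x_k)/(x_{k+1}-x_k)$ and endpoint values $\dot x_k,\dot x_{k+1}$. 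The paper's route has the advantage that the formulas \eqref{eq:xtz}--\eqref{eq:dtx} are reused verbatim in the energy estimate of Lemma \ref{lemma:decrease}; yours avoids the change of variables entirely at the price of more boundary-term bookkeeping. The only point to make explicit in a full write-up is that for merely Lipschitz $\varphi$ the Leibniz differentiation holds only for a.e.\ $t$, so the identity should be integrated in time from an a.e.\ statement --- the same (harmless) caveat applies to the paper's use of the chain rule on $\varphi(t,X(t,z))$.
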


The proof of Proposition \ref{prop:approxpde} is similar to the proof of Proposition 2.1 in \cite{DRR}. It exploits the fact that, thanks also to the continuity of the mobility function $v$, for any fixed $z\in[0,c_L]$ the function $t\mapsto X(t,z)$ is a $C^1$ function on the time interval on which $\rho^N_L$ is well defined. We report the proof for completeness. 

\begin{proof}
	Assume that $\rho^N_L$ is well-defined on the interval $[0,T)$. In particular, $x_0(t)<\dots<x_{N-1}(t)$ for all $t\in[0,T)$. Since $K\in C^1(\R^d\setminus \{0\})$, $\phi\in C^0$ and $v\in C^0$, then $x_k$ and $\rho_k$ are $C^1$ functions of time. In particular, the function 
	\begin{equation}\label{eq:xtz}
		X(t,z)=\sum_{k=0}^{N-1}\chi_{[kc_L/N,(k+1)c_L/N)}(z)x_k(t)+\Big(\frac{N}{c_L}z-k\Big)(x_{k+1}(t)-x_k(t))
	\end{equation}   
	is $C^1$ as  function of time for any fixed $z\in[0,c_L]$. 
	Using the change of variables $x=X(t,z)$ and the fact that the function $t\mapsto X(t,z)$ is $C^1$ for any fixed $z\in[0,c_L]$ one has that
	\begin{align}
		\int_0^T\int_{\T_L}\rho^N_L(t,x)\partial_t\varphi(t,x)\dx\dt&=\int_0^T\int_0^{c_L}\partial_t\varphi(t,X(t,z))\dz\dt\\
		&=-\int_{\T_L}\varphi(0,x)\rho^N_L(0,x)\dx\\
		&-\int_0^T\int_0^{c_L}\partial_x\varphi(t,X(t,z))\partial_tX(t,z)\dz\dt.\label{eq:2.6}
	\end{align} 
	Differentiating w.r.t. $t$ the function $X(t,z)$ in \eqref{eq:xtz} and inserting the formula for $\dot x_k(t)$ given in \eqref{eq:ode} one gets
	\begin{align}
		\partial_tX(t,z)&=\sum_{k=0}^{N-1}\chi_{[kc_L/N,(k+1)c_L/N)}(z)\dot x_k(t)+\Big(\frac{N}{c_L}z-k\Big)(\dot x_{k+1}(t)-\dot x_k(t))\notag\\
		&=\sum_{k=0}^{N-1}\chi_{[kc_L/N,(k+1)c_L/N)}(z)\Bigl[-\frac{c_L}{N}\sum_{j\neq k}v_kK'(x_k-x_j)\notag\\
		&-\frac{c_L}{N}\Bigl(\frac{N}{c_L}z-k\Bigr)\Bigl(\sum_{i \neq k+1}v_{k+1}K'(x_{k+1}-x_j)-\sum_{i \neq k}v_kK'(x_{k}-x_j)\Bigr)\Bigr]\notag\\
		&-\frac{N}{c_L}[\phi_v(\rho_k)-\phi_v(\rho_{k-1})]-\frac{N}{c_L}\Bigl(\frac{N}{c_L}z-k\Bigr)[(\phi_v(\rho_{k+1})-\phi_v(\rho_k))+(\phi_v(\rho_{k-1})-\phi_v(\rho_k))].\label{eq:dtx}
	\end{align}
	Inserting the formula above for $\partial_tX(t,z)$ in \eqref{eq:2.6} and using the change of variables $z=M_{\rho^N_L(t)}(x)$ one has that
	\begin{align}
		\int_0^T\int_{\T_L}\rho^N_L(t,x)\partial_t\varphi(t,x)&\dx\dt=-\int_{\T_L}\varphi(0,x)\rho^N_L(0,x)\dx\notag\\
		&-\int_0^T\int_{\T_L}\partial_x\varphi(t,x)\sum_{k=0}^{N-1}\chi_{[kc_L/N,(k+1)c_L/N)}(M_{\rho^N_L(t)}(x))\Bigl[-\frac{c_L}{N}\sum_{j\neq k}v_kK'(x_k-x_j)\notag\\
		&-\frac{c_L}{N}\Bigl(\frac{N}{c_L}M_{\rho^N_L(t)}(x)-k\Bigr)\Bigl(\sum_{i \neq k+1}v_{k+1}K'(x_{k+1}-x_j)-\sum_{i \neq k}v_kK'(x_{k}-x_j)\Bigr)\Bigr]\rho^N_L(t,x)\dx\dt\notag\\
		&-\int_0^T\int_{\T_L}\partial_x\varphi(t,x)\frac{ N}{c_L}\rho^N_L(t,x)\sum_k\chi_{[x_k(t),x_{k+1}(t))}(x)\Bigl[\bigl(\phi_v(\rho_k)-\phi_v(\rho_{k-1})\bigr)\notag\\
		&+\Bigl(\frac{ N}{c_L}M_{\rho^N_L(t)}(x)-k\Bigr)\bigl( (\phi_v(\rho_{k+1})-\phi_v(\rho_{k})) - (\phi_v(\rho_{k})-\phi_v(\rho_{k -1}))\bigr) \Bigr]\dx\dt\notag\\
		&=-\int_{\T_L}\varphi(0,x)\rho^N_L(0,x)\dx\notag\\
		&-\int_0^T\int_{\T^L}\partial_x\varphi(t,x)\rho^N_L(t,x){K^\lin}'(\rho^N_L, v^N_L)(t,\cdot)\ast\rho^N_L(t,x)\dx\dt\notag\\
		&-\int_0^T\int_{\T_L}\partial_x\varphi(t,x)\frac{ N}{c_L}\rho^N_L(t,x)\phi_v^{\mathrm{lin}}(\rho^N_L(t,x))\dx\dt,\notag
	\end{align} 
	thus proving \eqref{eq:pdeapprox}.
	
	Thus the piecewise constant approximations $\rho^N_L$  satisfy on $\T_L$ the PDE \eqref{eq:pdeapprox} in the weak sense.
\end{proof}

Due to the translation invariance of the torus, we can assume w.l.o.g. that $x_0(t)=x_N(t)$ is fixed during the evolution.

  \subsection{Bounds on the nonlocal interaction term}
  In this section we collect some useful estimates relating the nonlocal term of the aggregation diffusion equation \eqref{eq:mainPDE} with the corresponding term of the approximate PDE \eqref{eq:pdeapprox}.  The estimates below will be used when estimating the time derivative of the energy functional $\Fcal^v_L$ along the deterministic particle approximation $\rho^N_L$ (Lemma \ref{lemma:decrease}).
  
   We recall that $K''_{|_{\R\setminus\{0\}}} \in L^{\infty}(\R \setminus \{0\})$ and  with a slight abuse of notation we denote from now on the uniform bound of $K''$ on $\R\setminus\{0\}$ with $\| K'' \|_{L^\infty}$. 
   \begin{lemma}
  	\label{lemma:kbounds}
  	Let $K$ be as in \eqref{eq:k1}-\eqref{eq:k3} and let $\rho^N_L$ be defined as in \eqref{eq:rhonl}. The following holds:
  	\begin{equation}\label{eq:kerbound}
  		| K' \ast \rho^N_L (t, X(t,z))  | \leq c_L\|K' \|_{L^\infty}, \qquad\forall\,z\in[0,c_L),
  	\end{equation} 
  	and if $z \in [kc_L/N, (k+1)c_L/N)$, then 
  	\begin{align}
  		\Bigl|\int_0^{c_L} K'\ast \rho^N_L (t,X(t,z)) &\big[ v^N_L(t,X(t,z))K' \ast \rho^N_L (t,X(t,z)) -  {K^\lin}'(\rho^N_L, v^N_L) \ast \rho^N_L (t,X(t,z)) \big] \dz\Bigr|\leq\notag\\
  	    &\leq {C \|K' \|_{L^\infty} \|v \|_{\infty} \big(  \|K'' \|_{L^\infty}L + \|K' \|_{L^\infty} \big) \frac{c_L^2}{N}.} \label{eq:kerLip}
  	\end{align}
  \end{lemma}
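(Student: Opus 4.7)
The bound \eqref{eq:kerbound} is immediate from $|K'|\leq\|K'\|_{L^\infty}$ and $\int_{\T_L}\rho^N_L = c_L$. For \eqref{eq:kerLip}, set $T(x) := K' \ast \rho^N_L(t,x)$. A direct computation from the definition of $(K^{\lin})'$, after the change of variables $y = X(t,w)$ and using its piecewise-constant structure in $w$, shows that for $z \in [kc_L/N, (k+1)c_L/N)$ and $\alpha := \frac{N}{c_L} z - k$,
\[
(K^{\lin})' \ast \rho^N_L(t, X(t,z)) = (1-\alpha) S_k + \alpha S_{k+1}, \qquad S_m := \frac{c_L}{N}\sum_{j\neq m} v_m K'(x_m - x_j),
\]
i.e.\ a piecewise linear interpolation (in $z$) of the discrete particle convolution. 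Since $v^N_L T = v_k T$ on the same cell, the integrand of \eqref{eq:kerLip} (stripped of the outer $K' \ast \rho^N_L$ factor) decomposes as $A+B+C+E$, where $A := v_k[T(X(t,z)) - (1-\alpha) T(x_k) - \alpha T(x_{k+1})]$ is a linear interpolation error of $T$, $B := (1-\alpha)[v_k T(x_k) - S_k]$ and $C := \alpha[v_{k+1} T(x_{k+1}) - S_{k+1}]$ are quadrature errors at the particle endpoints, and $E := \alpha (v_k - v_{k+1}) T(x_{k+1})$ is a mobility-jump term.

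The terms $A,B,C$ I would handle by pointwise bounds. Since $K$ is even, $|K'(0^+) - K'(0^-)| \leq 2\|K'\|_{L^\infty}$, so the distributional derivative of $T$ satisfies $|T'| \leq \|K''\|_{L^\infty} c_L + 2\|K'\|_{L^\infty}\rho^N_L$ a.e.\ on $\T_L$. Integrating on $[x_k, X(t,z)]$ (and using $(x_{k+1}-x_k)\rho_k = c_L/N$) yields $|T(X(t,z)) - T(x_k)| \leq \|K''\|_{L^\infty} c_L(x_{k+1} - x_k) + 2\|K'\|_{L^\infty} c_L/N$, which controls $|A|$. A left-endpoint quadrature argument, using that for $j\neq k$ the sign of $x_k - y$ is constant on $[x_j, x_{j+1}]$ (so that $|K'(x_k - y) - K'(x_k - x_j)| \leq \|K''\|_{L^\infty}(x_{j+1} - x_j)$), together with the trivial bound $|K'| \leq \|K'\|_{L^\infty}$ on the diagonal cell $j=k$, gives $|v_k T(x_k) - S_k| \leq \|v\|_{\infty}(\|K''\|_{L^\infty} L + \|K'\|_{L^\infty}) c_L/N$ and analogously for $S_{k+1}$. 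Multiplying the resulting pointwise estimates by $|K' \ast \rho^N_L| \leq c_L \|K'\|_{L^\infty}$, integrating in $z$, summing over $k$ (using $\sum_k (x_{k+1}-x_k) = L$) and invoking $c_L \leq 1$ produces the required contribution to \eqref{eq:kerLip}.

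The main obstacle is $E$: a pointwise bound $|v_k - v_{k+1}| \leq 2\|v\|_{\infty}$ produces a sum that does not decay in $N$, and no BV control on $\rho^N_L$ (hence on $v^N_L$) is available under our hypotheses. I would therefore not pass to absolute values, but rather compute the signed integral and apply periodic Abel summation:
\[
\int_0^{c_L} K' \ast \rho^N_L(t, X(t,z))\, E(z)\, \dz = \frac{c_L}{N} \sum_k (v_k - v_{k+1}) a_k = \frac{c_L}{N} \sum_k v_k (a_k - a_{k-1}),
\]
with $a_k := T(x_{k+1}) \int_0^1 T((1-\alpha) x_k + \alpha x_{k+1})\, \alpha \,\mathrm{d}\alpha$. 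The telescoped differences $|a_k - a_{k-1}|$ reduce, via the triangle inequality and Fubini in $\alpha$, to sums of the form $\sum_k |T(y_k(\alpha)) - T(y_{k-1}(\alpha))|$, where $\{y_k(\alpha) := (1-\alpha)x_k + \alpha x_{k+1}\}_k$ is, for each fixed $\alpha \in [0,1]$, an ordered partition of $\T_L$. Each such sum is bounded by the total variation
\[
\mathrm{TV}(T) \leq \int_{\T_L} |T'|\,\dx \leq c_L(\|K''\|_{L^\infty} L + 2\|K'\|_{L^\infty}),
\]
which together with $|T(x_{k+1})| \leq c_L \|K'\|_{L^\infty}$ and the bound $|a_k| \leq \tfrac{1}{2}c_L\|K'\|_{L^\infty}\|T\|_\infty$ delivers the required $\frac{c_L^2}{N}$ estimate for $E$. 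Combining the four contributions closes \eqref{eq:kerLip}.
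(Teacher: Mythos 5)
Your proof is correct, and it reaches the stated bound by a genuinely different decomposition, although the key mechanism for the one dangerous term is the same as in the paper. The paper changes variables back to $x=X(t,z)$ and splits the error into an off-diagonal quadrature error $\sum_{j\neq k}\int_{x_j}^{x_{j+1}}[K'(x_k-x_j)-K'(x-y)]\rho_j\,\mathrm{d}y$ (which mixes the variation in $x$ over the cell with the quadrature in $y$), the diagonal cell $j=k$, and the interpolation term $\alpha\frac{c_L}{N}\bigl(v_{k+1}\sum_{j\neq k+1}K'(x_{k+1}-x_j)-v_k\sum_{j\neq k}K'(x_k-x_j)\bigr)$; this last term is handled by an index shift (Abel summation in $k$) that keeps $v_k$ attached to its own sum $\sum_{j\neq k}K'(x_k-x_j)$ and transfers the discrete difference onto $K'\ast\rho^N_L(t,X(t,z-c_L/N))-K'\ast\rho^N_L(t,X(t,z))$, which sums to $O(L/N)$ by the Lipschitz bound on $K'$. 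You instead separate the interpolation error of $T=K'\ast\rho^N_L$ itself ($A$), the nodal quadrature errors ($B$, $C$), and the mobility jump $E=\alpha(v_k-v_{k+1})T(x_{k+1})$, and you neutralize $E$ by the same summation-by-parts idea, transferring the difference onto the $T$-dependent quantities $a_k$ and invoking $TV(K'\ast\rho^N_L)\leq c_L(\|K''\|_{L^\infty}L+2\|K'\|_{L^\infty})$. Both arguments correctly recognize that $|v_{k+1}-v_k|$ admits no useful pointwise or BV bound and must be summed by parts; your version makes this more transparent by confining the mobility entirely to $E$, and it is also more careful about the possible jump of $K'$ at the origin (your $|T'|\leq\|K''\|_{L^\infty}c_L+2\|K'\|_{L^\infty}\rho^N_L$), a contribution the paper silently absorbs into the $\|K'\|_{L^\infty}^2$ part of the stated constant. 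What the paper's grouping buys is that it never needs $TV(K'\ast\rho^N_L)$ as a separate object; what yours buys is a cleaner separation of the three error sources (interpolation, quadrature, mobility) and an explicit, reusable total-variation bound on the nonlocal velocity.
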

  
  \begin{proof}
  	The bound \eqref{eq:kerbound} follows directly from the fact that  $\int_{\T_L}\rho^N_L=c_L$.
  	
  	To prove the inequality \eqref{eq:kerLip}, we use the definition of ${K^\lin}'(\rho^N_L,v^N_L)$ in  \eqref{eq:Klin} as follows 
  	\begin{align*}
  		\Bigl|&\int_0^{c_L} K'\ast \rho^N_L (t,X(t,z)) \big[ v^N_L (t,X(t,z)) K' \ast \rho^N_L (t,X(t,z)) -  {K^\lin}'(\rho^N_L,v^N_L) \ast \rho^N_L (t,X(t,z)) \big] \dz\Bigl|= \\
  		&=\Bigl|  \sum_{k=0}^{N-1} \int_{x_k}^{x_{k+1}} K' \ast \rho^N_L(t,x) v_k(t) \rho_k(t) \sum_{j\neq k} \int_{x_j}^{x_{j+1}} \big[K'(x_k(t) - x_j(t)) - K'(x-y) \big] \rho_j(t)  \dy \dx  \\
  		& -  \sum_{k=0}^{N-1} \int_{x_k}^{x_{k+1}} K' \ast \rho^N_L(t,x) v_k(t) \rho_k(t)  \int_{x_k}^{x_{k+1}} K'(x-y) \rho_k(t) \dy \dx \\
  		& + \sum_{k=0}^{N-1} \int_{kc_L/N}^{(k+1)c_L/N} K' \ast \rho^N_L(t, X(t,z)) \left(\frac{N}{c_L}z - k \right) \frac{c_L}{N} \Bigl( v_{k+1}(t) \sum_{j\neq k+1} K'(x_{k+1} - x_j) - v_k(t) \sum_{j\neq k} K'(x_k - x_j)  \Bigr) \dz\Bigr|.
  		\end{align*}  
  	Then we proceed to estimate the different terms in the above formula. First one has that
  	\begin{align*}
  		\Bigl|&  \sum_{k=0}^{N-1} \int_{x_k}^{x_{k+1}} K' \ast \rho^N_L(t,x) v_k(t) \rho_k(t) \sum_{j\neq k} \int_{x_j}^{x_{j+1}} \big[K'(x_k(t) - x_j(t)) - K'(x-y) \big] \rho_j(t)  \dy \dx\Bigr|\leq\\
  	 &\leq \|K' \|_{L^\infty}  \|K'' \|_{L^\infty}  \sum_{k=0}^{N-1} \int_{x_k}^{x_{k+1}} v_k(t) \rho_k(t) \left(\frac{c_L}{N}\sum_{j \neq k} \max\bigl\{|x_{k+1}(t) - x_k(t)|,|x_j(t)-x_{j+1}(t)|\bigr\}\right) \dx\\
  	 &\leq 2c^2_L\|K' \|_{L^\infty}\|K''\|_{L^\infty}\|v\|_{L^\infty}\frac{L}{N},
  	 \end{align*}
   where we used the fact that $\sum_{k=0}^{N-1}|x_k-x_{k+1}|=L$.
   
   The second term can be estimated as follows
  \begin{align*}
  	\Bigg|&\sum_{k=0}^{N-1} \int_{x_k}^{x_{k+1}} K' \ast \rho^N_L(t,x) v_k(t) \rho_k(t)  \int_{x_k}^{x_{k+1}} K'(x-y) \rho_k(t) \dy \dx\Bigg|\leq\\
  	 &\leq  \|K' \|^2_{L^\infty} \| v\|_{\infty}  \frac{c_L^2}{N}.
  	\end{align*}
  We conclude with the following estimate
  \begin{align*}
  	\Bigg|&\sum_{k=0}^{N-1} \int_{kc_L/N}^{(k+1)c_L/N} K' \ast \rho^N_L(t, X(t,z)) \left(\frac{N}{c_L}z - k \right) \frac{c_L}{N} \Bigl( v_{k+1}(t) \sum_{j\neq k+1} K'(x_{k+1} - x_j) - v_k(t) \sum_{j\neq k} K'(x_k - x_j)  \Bigr) \dz\Bigg|\leq\\
  		&\leq\Bigg|\sum_{k=0}^{N-1} \Bigg[ \int_{(k-1)c_L/N}^{kc_L/N} v_k K' \ast \rho^N_L(t,X(t,z)) \left(\frac{N}{c_L}z - k +1\right) \dz  \\
  		&- \int^{(k+1)c_L/N}_{kc_L/N} v_k K' \ast \rho^N_L(t,X(t,z)) \left(\frac{N}{c_L}z - k\right) \dz \Bigg] \frac{c_L}{N} \sum_{j\neq k} K'(x_k - x_j)\Bigg| \\
  		&\leq\Bigg| \sum_{k=0}^{N-1} \int^{(k+1)c_L/N}_{kc_L/N}  v_k  \left(\frac{N}{c_L}z - k\right) \big( K' \ast \rho^N_L(t,X(t,z - c_L/N)) - K' \ast \rho^N_L(t,X(t,z))  \big)  \frac{c_L}{N} \sum_{j\neq k} K'(x_k - x_j) \dz \Bigg|\\
  		&\leq  C \|K' \|_{L^\infty} \|v \|_{\infty}\|K'' \|_{L^\infty} \frac{c_L^2 L }{N}. 
  	\end{align*} 
  Thus \eqref{eq:kerLip} is proved.

  \end{proof}

 \section{Estimates on the energy of the particle approximations}
 
 The aim of this section is to provide explicit computations and estimates on the energy of the discrete particle approximations. 
  
   In this section we will sometimes denote with $\rho^N_L(t)$ the function $\rho^N_L(t,\cdot):\T_L\to[0,+\infty)$. 
   
  We have the following estimate on the time derivative of the energy $\Fcal^v_L$ along the discrete particle approximations $\rho^N_L$, where
    \begin{equation}
  \label{eq:FcalL}
  \begin{split}
  \Fcal^v_{L}(\rho) :=\frac12\int_{\T_L}\int_{\T_L} K(x-y) \rho(x)\rho(y)\dx\dy +\int_{\T_L}W_v(\rho)\dx,
  \end{split}
  \end{equation}
and $W_v:\R\to\R$ is such that
\begin{equation}
	\phi_v(s)=sW_v'(s)-W_v(s).
\end{equation}
In particular, $W_v''(s)=\phi_v'(s)\geq0$, thus $W_v$ is a convex function. 
  
\begin{lemma}\label{lemma:decrease}
	Assume $\rho^N_L(t)$ is well defined on $[0,T)$ and $N\geq \bar N$ for some sufficiently large $\bar N$ depending on $\|K'\|_{L^\infty},\|K''\|_{L^\infty}$ and $\|v\|_{L^\infty}$. Then, for all $t\in[0,T)$ the functional $\Fcal^v_L(\rho^N_L(t))$ satisfies
	\begin{align}
	\frac{d}{dt}\Fcal^v_L(\rho^N_L(t))&\leq -\Bigl(1-C_1\frac{L}{\sqrt{N}}\Bigr) \sum_{k=0}^{N-1} \int_{kc_L/N}^{(k+1)c_L/N}  \frac{N^2}{c_L^2} \bigl(\phi_v(\rho_k(t))-\phi_v(\rho_{k-1}(t))\bigr)^2\dz\notag\\
		&+C_2 \Biggl(\sum_{k=0}^{N-1} \int_{kc_L/N}^{(k+1)c_L/N}  \frac{N^2}{c_L^2} \bigl(\phi_v(\rho_k(t))-\phi_v(\rho_{k-1}(t))\bigr)^2\dz\Biggr)^{1/2} +C_1 \frac{L}{\sqrt{N}}+C_2,\label{eq:gradient}
	\end{align}
where $C_1=C_1\bigl(\|K'\|_{L^\infty}, \|K''\|_{L^\infty}, \|v\|_{L^\infty})$ and $C_2=C_2\bigl(\|K'\|_{L^\infty},\|v\|_{L^\infty}\bigr)$.
\end{lemma}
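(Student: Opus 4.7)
The plan is to compute $\tfrac{d}{dt}\Fcal^v_L(\rho^N_L(t))$ explicitly along the particle flow, extract the main negative dissipation from $\sum_k G_k\dot x_k$, and bound the remaining cross terms via Lemma~\ref{lemma:kbounds} together with Cauchy--Schwarz and Young's inequality. For the interaction part of the energy, the pseudoinverse representation $\tfrac{1}{2}\int_{\T_L}\int_{\T_L}K(x-y)\rho^N_L\rho^N_L\,dx\,dy = \tfrac{1}{2}\int_0^{c_L}\int_0^{c_L}K(X(t,z)-X(t,w))\,dz\,dw$ together with the $C^1$-in-$t$ regularity of $z\mapsto X(t,z)$ (established in the proof of Proposition~\ref{prop:approxpde}) yields $\tfrac{d}{dt}[\tfrac12\iint K\rho^N_L\rho^N_L] = \int_0^{c_L}(K'\ast\rho^N_L)(t,X(t,z))\partial_tX(t,z)\,dz$. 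For the internal energy, writing $\int_{\T_L}W_v(\rho^N_L)\,dx = \sum_k W_v(\rho_k)(x_{k+1}-x_k)$ and exploiting the time-invariance of $\rho_k(x_{k+1}-x_k)=c_L/N$ together with the Legendre identity $\phi_v(s) = sW_v'(s) - W_v(s)$, a direct computation followed by discrete summation by parts (boundary terms cancel by periodicity on $\T_L$) gives $\tfrac{d}{dt}\int_{\T_L}W_v(\rho^N_L)\,dx = \sum_k G_k\dot x_k$.

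Substituting the ODE \eqref{eq:ode} for $\dot x_k$ into $\sum_k G_k\dot x_k$, the diffusion part $-(N/c_L)G_k$ produces the main dissipation $-\tfrac{N}{c_L}\sum_k G_k^2 = -\sum_k\int_{kc_L/N}^{(k+1)c_L/N}(N^2/c_L^2)G_k^2\,dz =: -\mathcal D$, matching the coefficient-$1$ term in \eqref{eq:gradient}. Expanding $\partial_tX$ via \eqref{eq:dtx} in the interaction derivative, the drift-drift contribution equals, by \eqref{eq:Klin}, $-\int_0^{c_L}(K'\ast\rho^N_L){K^\lin}'(\rho^N_L,v^N_L)\ast\rho^N_L\,dz$; by \eqref{eq:kerLip} in Lemma~\ref{lemma:kbounds}, this differs from $-\int v^N_L(K'\ast\rho^N_L)^2\,dz\leq 0$ (using $v^N_L\geq 0$) by an error of order $c_L^2(\|K''\|_\infty L+\|K'\|_\infty)/N$, which I absorb into the additive $C_1 L/\sqrt N + C_2$ on the right-hand side of \eqref{eq:gradient}.

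Two cross contributions then remain: $-\tfrac{c_L}{N}\sum_k G_k v_k\sum_{j\neq k}K'(x_k-x_j)$ from the drift part of $\dot x_k$ in the entropy derivative, and $-\tfrac{N}{c_L}\int_0^{c_L}(K'\ast\rho^N_L)(t,X(t,z))\widetilde G(z)\,dz$, where $\widetilde G$ is the piecewise-linear interpolation in $z$ of $(G_k)_k$, coming from the diffusion part of $\partial_tX$. For each, I apply Cauchy--Schwarz in $k$ using $\sum_k G_k^2 = (c_L/N)\mathcal D$ and $\sum_k 1 = N$, combined with the bounds $|v_k(c_L/N)\sum_{j\neq k}K'(x_k-x_j)|\leq \|v\|_\infty\|K'\|_\infty c_L$ and $\|K'\ast\rho^N_L\|_\infty\leq \|K'\|_\infty c_L$ from \eqref{eq:kerbound}; each cross term is thereby bounded by $C(\|K'\|_\infty,\|v\|_\infty)c_L^{3/2}\sqrt{\mathcal D}$, producing the $C_2\sqrt{\mathcal D}$ term. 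The Lipschitz error $O(\|K''\|_\infty L c_L/N)$ incurred when comparing $(c_L/N)\sum_{j\neq k}K'(x_k-x_j)$ with $K'\ast\rho^N_L(t,x_k)$ (resp.\ the analogous error for $\widetilde G$) contributes, after the same Cauchy--Schwarz, a term $C(L/N)\sqrt{\mathcal D}$; Young's inequality with parameter $\epsilon\sim 1/\sqrt N$ splits this as $C_1(L/\sqrt N)\mathcal D + C_1 L/\sqrt N$, producing the $(1-C_1 L/\sqrt N)$ coefficient in front of $\mathcal D$ in \eqref{eq:gradient}.

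The main obstacle is the bookkeeping of these cross terms: because the nonlinear mobility $v_k$ does not coincide with the effective mobility that would appear in a pure gradient-flow dissipation, the discrete summation by parts does not close into a perfect square, and one has to gain the small factors $c_L/N$ or $L/N$ from Lipschitz continuity of $K'$ and from the discrete-to-continuous error \eqref{eq:kerLip} in order to compensate the $\sqrt N$ coming from summation over $N$ particles. Tuning the Young's parameter to precisely $\sim 1/\sqrt N$ then yields the stated coefficient $1-C_1 L/\sqrt N$ in front of $\mathcal D$. The hypothesis $N\geq \bar N$, with $\bar N$ depending on $\|K'\|_\infty,\|K''\|_\infty,\|v\|_\infty$, ensures that this coefficient is strictly positive, so that \eqref{eq:gradient} functions as a genuine dissipation inequality in the subsequent $W^{1,2}$ estimates for $\phi_v(\rho^N_L)$.
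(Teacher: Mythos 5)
Your proposal is correct and follows essentially the same route as the paper's proof: differentiate $\Fcal^v_L$ along the flow via the pseudoinverse representation, extract the dissipation $-\mathcal D$ from the diffusive part of $\sum_k G_k\dot x_k$ together with the nonpositive term $-\int v^N_L\,(K'\ast\rho^N_L)^2$ from the drift part, and control the remaining cross and discretization terms with Lemma~\ref{lemma:kbounds} and Cauchy--Schwarz. The only (harmless) differences are in bookkeeping: you absorb the $O(L\sqrt{\mathcal D}/N)$ errors via Young's inequality with parameter $\sim 1/\sqrt N$ where the paper argues by cases on the size of $|G_k|$ relative to $|K'\ast\rho^N_L|$ and on whether the dissipation exceeds $1$, and you bound the piecewise-linear interpolant $\widetilde G$ directly by $\sum_k(|G_k|+|G_{k+1}|)$ instead of performing the discrete summation by parts the paper uses for its term $A_3$.
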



From Lemma~\ref{lemma:decrease}, we have the following discrete $W^{1,2}$ bound in the pseudoinverse variables for the function $\phi_v(\rho^N_L)$, which will be crucial to obtain strong $L^1$-compactness of the approximations. 
\begin{corollary}\label{cor:energybound}
 Let $T>0$, $\eps>0$,  $L>0$, $0<C_3<1-\eps$ and $C_2$ the constant appearing in \eqref{eq:gradient}. Then there exists $\bar N=\bar N(T,\eps,L,\|K'\|_{L^\infty}, \|K''\|_{L^\infty}, \|v\|_{L^\infty})$ and $C'=C'(\eps, C_2,C_3,\|K'\|_{L^\infty})$ such that for all $N\geq \bar N$ if $\rho^N_L$ are the discrete particle approximations with initial datum $\rho_{0,L}$ and they are well defined on $[0,T)$, then 
 \begin{align}\label{eq:stimaw12}
 	\int_0^T\sum_{k=0}^{N-1}\int_{kc_L/N}^{(k+1)c_L/N}\frac{N^2}{c_L^2}\bigl(\phi_v(\rho_{k}(t))-\phi_v(\rho_{k-1}(t))\bigr)^2\dz\dt\leq C'(1+T)+\frac{1}{C_3}\Fcal^v_L(\rho_{0,L}).
 \end{align}
\end{corollary}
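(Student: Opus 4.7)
The plan is to integrate the differential inequality of Lemma~\ref{lemma:decrease} in time, after absorbing the cross term $C_2\sqrt{\cdot}$ via Young's inequality. For brevity set
\[
D(t):=\sum_{k=0}^{N-1}\int_{kc_L/N}^{(k+1)c_L/N}\frac{N^2}{c_L^2}\bigl(\phi_v(\rho_k(t))-\phi_v(\rho_{k-1}(t))\bigr)^2\,dz,
\]
so that \eqref{eq:gradient} reads $\tfrac{d}{dt}\Fcal^v_L(\rho^N_L(t))\leq -\bigl(1-C_1 L/\sqrt{N}\bigr)D(t)+C_2\sqrt{D(t)}+C_1 L/\sqrt{N}+C_2$.

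First, I would apply the elementary bound $C_2\sqrt{D}\leq \eta D+C_2^{2}/(4\eta)$, which is valid for every $\eta>0$. Choosing $\eta:=1-\eps-C_3$, which is strictly positive by the standing hypothesis $C_3<1-\eps$, and then picking $\bar N=\bar N(T,\eps,L,\|K'\|_{L^\infty},\|K''\|_{L^\infty},\|v\|_{L^\infty})$ large enough that $C_1 L/\sqrt{N}\leq \eps$ whenever $N\geq \bar N$ (the dependence of $C_1$ on $\|K'\|_{L^\infty},\|K''\|_{L^\infty},\|v\|_{L^\infty}$ forces these quantities into $\bar N$), the coefficient in front of $-D(t)$ becomes at least $(1-\eps)-\eta=C_3$. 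The inequality therefore collapses to
\[
\tfrac{d}{dt}\Fcal^v_L(\rho^N_L(t))\leq -C_3\,D(t)+\tilde C,
\]
with $\tilde C:=C_2+C_2^{2}/(4\eta)+\eps$ depending only on $\eps,C_2,C_3$ (and $\|K'\|_{L^\infty}$ through $C_2$).

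Second, I would integrate over $[0,T]$ to obtain
\[
C_3\int_0^T D(t)\,dt\leq \Fcal^v_L(\rho^N_L(0))-\Fcal^v_L(\rho^N_L(T))+\tilde C\,T.
\]
The initial energy is controlled by $\Fcal^v_L(\rho^N_L(0))\leq \Fcal^v_L(\rho_{0,L})$ via Jensen's inequality applied cell-by-cell (convexity of $W_v$ for the local part, and an analogous averaging argument for the nonlocal part, both straightforward since $\rho^N_L(0)$ is the cell-wise average of $\rho_{0,L}$). The terminal energy $\Fcal^v_L(\rho^N_L(T))$ is bounded below by a constant depending only on $\|K\|_{L^\infty}$ and on the $L^\infty$-bound on $\rho^N_L(T)$ given by Theorem~\ref{thm:linfty}: the nonlocal part is at most $\tfrac12\|K\|_{L^\infty}c_L^{2}$ in absolute value, and the local part is controlled by the continuity of $W_v$ on the compact range $[0,\bar C]$. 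Dividing through by $C_3$ and absorbing everything into $C'(1+T)$ on the right-hand side yields the stated bound, with $C':=\tilde C/C_3$ plus the two endpoint constants.

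The entire substance of the argument is Lemma~\ref{lemma:decrease}; what remains is the Young-type absorption and the bookkeeping of constants. I would expect no genuine obstacle. The only mildly delicate point is to choose $\eta$ so as to simultaneously (i) absorb the cross term $C_2\sqrt{D}$ and (ii) leave exactly the coefficient $C_3$ in front of $-D(t)$ for $N\geq\bar N$; this is what fixes $\eta=1-\eps-C_3$ and dictates the bound $C_1 L/\sqrt{N}\leq \eps$ defining $\bar N$.
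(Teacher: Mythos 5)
Your proof is correct, but it takes a genuinely different and in fact more streamlined route than the paper. The paper does not use Young's inequality: starting from $\frac{d}{dt}\Fcal^v_L(\rho^N_L(t))\leq-(1-\eps)a^2(t)+C_2a(t)+C_2+\eps$ (with $a^2=D$), it observes that the right-hand side is $\leq -C_3a^2(t)+C_2+\eps$ only on the superlevel set $\{a^2(t)\geq\bar C\}$ for a suitable $\bar C(\eps,C_2,C_3)$, decomposes $[0,T]$ into the open sublevel set $\Omega(\bar C)$ (where $\int a^2$ is trivially bounded by $\bar C\,|\Omega(\bar C)|$) and its complement (where it integrates the differential inequality on each component), and then iterates across the alternating intervals to propagate the energy bound from $t=0$ to the left endpoints of the bad intervals. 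Your absorption $C_2\sqrt{D}\leq\eta D+C_2^2/(4\eta)$ with $\eta=1-\eps-C_3$ makes the inequality $\frac{d}{dt}\Fcal^v_L\leq-C_3D+\tilde C$ hold for \emph{all} $t$, so a single integration over $[0,T]$ replaces the paper's interval bookkeeping; the price is only a slightly larger additive constant, which is harmless since it lands in $C'(1+T)$. The two arguments are morally the same (for large $a$ the quadratic term dominates the linear one) but yours is cleaner.

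Two caveats on your endpoint estimates. First, Jensen's inequality handles the entropy part of $\Fcal^v_L(\rho^N_L(0))\leq\Fcal^v_L(\rho_{0,L})$ via convexity of $W_v$, but there is no ``analogous averaging argument'' for the interaction part, since $K$ carries no sign or convexity assumption; that term must instead be compared using the Lipschitz bound on $K$, which produces an error of order $\|K'\|_{L^\infty}c_L^2L/N$ (or one simply keeps $\Fcal^v_L(\rho^N_L(0))$ throughout, as the paper implicitly does). Second, you invoke Theorem~\ref{thm:linfty} to lower-bound $\Fcal^v_L(\rho^N_L(T))$, but the proof of Theorem~\ref{thm:linfty} relies on this very corollary, so that reference would create a circularity. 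It is easily avoided: the interaction part is bounded below by $-\tfrac12\|K\|_{L^\infty}c_L^2$ using only the mass constraint, and the entropy part by $L\inf W_v>-\infty$, so no $L^\infty$ control on $\rho^N_L(T)$ is needed. With these two repairs your argument is complete.
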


Let us first show how Corollary \ref{cor:energybound} follows from Lemma \ref{lemma:decrease}.

\begin{proof}
	[Proof of Corollary \ref{cor:energybound}:]

For any $\eps>0$, there exists $\bar N=\bar N(T,\eps,L,\|K'\|_{L^\infty}, \|K''\|_{L^\infty}, \|v\|_{L^\infty})$ such that for any $N\geq\bar N$ it holds 
\[
C_1\frac{L}{\sqrt{N}}\leq \eps,
\]
where $C_1=C_1\bigl(\|K'\|_{L^\infty}, \|K''\|_{L^\infty}, \|v\|_{L^\infty})$ is the constant appearing in Lemma \ref{lemma:decrease}. 

Setting for simplicity 
\begin{equation}\label{eq:a}
a^2(t)=\sum_{k=0}^{N-1}\int_{kc_L/N}^{(k+1)c_L/N}\frac{N^2}{c_L^2}\bigl(\phi_v(\rho_{k}(t))-\phi_v(\rho_{k-1}(t))\bigr)^2\dz,
\end{equation} 
the inequality \eqref{eq:gradient} when $N\geq\bar N$ reads as
\begin{align}\label{eq:gradient_a1}
\frac{d}{dt}\Fcal^v_L(\rho^N_L(t))&\leq-(1-\eps)a^2(t)+C_2a(t)+C_2+\eps.	
\end{align}
Now notice from the above that there exists $\bar C=\bar C(\eps,C_2,C_3)$ such that whenever $a^2(t)\geq\bar C$ one has that
\begin{equation}\label{eq:gradient_a}
\frac{d}{dt}\Fcal^v_L(\rho^N_L(t))\leq-C_3a^2(t)+C_2+\eps.	
\end{equation}
Since $t\mapsto a(t)$ is a continuous function, the set
\[
\Omega(\bar C)=\{t\in[0,T]:\,a^2(t)<\bar C\}=\underset{i\in\N}\bigcup(t_{2i},t_{2i+1})
\]
is open. 

For $(t_{2i},t_{2i+1})\subset\Omega(\bar C)$ by definition it holds
\begin{align}
	\int_{t_{2i}}^{t_{2i+1}}a^2(t)\dt\leq\bar C(t_{2i+1}-t_{2i}), 
\end{align}
while for  $(t_{2i-1},t_{2i})\subset\Omega(\bar C)^c$, by \eqref{eq:gradient_a} it holds 
\begin{align}
	\int_{t_{2i-1}}^{t_{2i}}a^2(t)\dt&\leq \frac{1}{C_3}(C_2+\eps)(t_{2i}-t_{2i-1})+\frac{1}{C_3}\bigl(\Fcal^v_L(\rho^N_L(t_{2i-1}))-\Fcal^v_L(\rho^N_L(t_{2i}))\bigr)\notag\\
	&\leq\frac{1}{C_3}(C_2+\eps)(t_{2i}-t_{2i-1})+\frac{1}{C_3}\bigl(\Fcal^v_L(\rho^N_L(t_{2i-1}))-\min\Fcal^v_L\bigr)\notag\\
		&\leq\frac{1}{C_3}(C_2+\eps)(t_{2i}-t_{2i-1})+\frac{1}{C_3}\bigl(\Fcal^v_L(\rho^N_L(t_{2i-1}))+\|K'\|_{L^\infty}\bigr)\label{eq:contoa}
\end{align}
Now notice that by  \eqref{eq:gradient_a1} 
\begin{equation}
	\Fcal^v_L(\rho^N_L(t_{2i-1}))\leq\Fcal^v_L(\rho^N_L(t_{2i-2}))+ (C_2\sqrt{\bar C}+C_2+\eps)(t_{2i-1}-t_{2i-2})
\end{equation}
and that by \eqref{eq:gradient_a}
\begin{equation}
\Fcal^v_L(\rho^N_L(t_{2i-2}))\leq 	\Fcal^v_L(\rho^N_L(t_{2i-3})) +(C_2+\eps)(t_{2i-2}-t_{2i-3}).	
\end{equation}
Finally, from \eqref{eq:contoa} and iterating the  estimates \eqref{eq:gradient_a1} and \eqref{eq:gradient_a} up to $i=0$, recalling \eqref{eq:a} we obtain
\begin{align}
	\int_{0}^T\sum_{k=0}^{N-1}\int_{kc_L/N}^{(k+1)c_L/N}\frac{N^2}{c_L^2}\bigl(\phi_v(\rho_{k}(t))-\phi_v(\rho_{k-1}(t))\bigr)^2\dz\dt&\leq \Bigl(\bar C+\frac{1}{C_3}(C_2+\eps)\Bigr)T\notag\\
	&+\frac{1}{C_3}(C_2\sqrt{\bar C}+C_2+\eps)T\notag\\
	&+\frac{1}{C_3}\bigl(\Fcal^v_L(\rho_{0,L})+\|K'\|_{L^\infty}\bigr)\notag\\
	&\leq C'(1+T)+\frac{1}{C_3}\Fcal^v_L(\rho_{0,L}).
\end{align}

\end{proof}

\begin{proof}
	[Proof of Lemma~\ref{lemma:decrease}:]  
	Let us recall the notation 
	\begin{align}
			G_k(t)&:= \phi_v(\rho_k(t)) -  \phi_v(\rho_{k-1}(t)). 	\label{eq:Fk}
	\end{align}

\textbf{Step 1 (Computation of the derivative)}

	Via the changes of variables $x=X(t,z)$ and $y=X(t,\xi)$ in the second term of $\Fcal_L(\rho^N_L(t))$ one has that 
	\begin{align}
		\frac{d}{dt}\Fcal^v_{L}(\rho^N_L(t)) &=  \frac{d}{dt}\int_{\T_L}   W_v(\rho^N_L(t,x)) \dx +  \frac{d}{dt} \frac12\int_{0}^{c_L} K\ast\rho^N_L(t, X(t,z))\dz\notag\\
		& =  \frac{d}{dt}\int_{\T_L}   W_v(\rho^N_L(t,x)) \dx +  \frac{d}{dt} \frac12\int_{0}^{c_L}\int_0^{c_L} K(X(t,z)-X(t,\xi))\d\xi\dz.\label{eq:derformv}
	\end{align}
	Since $K$ is Lipschitz and $t\mapsto X(t,z)$ belongs to $C^1([0,T))$ as observed through the explicit formula \eqref{eq:xtz} in the proof of Proposition \ref{prop:approxpde}, then one can differentiate the second term in the r.h.s. of \eqref{eq:derformv} and obtain  
	\begin{align}
		\frac{d}{dt} \frac12\int_{0}^{c_L}&\int_0^{c_L} K(X(t,z)-X(t,\xi))\d\xi\dz=\frac12\int_{0}^{c_L}\int_0^{c_L}K'(X(t,z)-X(t,\xi))\Bigl(\partial_tX(t,z)-\partial_tX(t,\xi)\Bigr)\d\xi\dz\notag\\
		&=\int_{0}^{c_L}\int_0^{c_L}K'(X(t,z)-X(t,\xi))\partial_tX(t,z)\d\xi\dz\notag\\
		&=-\int_0^{c_L}K'\ast\rho^N_L(t,X(t,z)){K^\lin}'(\rho^N_L, v^N_L)\ast\rho^N_L(t,X(t,z))\dz\notag\\
		& - \frac{N}{c_L} \sum_{k=0}^{N-1} \int_{kc_L/N}^{(k+1)c_L/N} K'\ast \rho^N_L (t,X(t,z))\Bigl[ G_k(t) + \Bigl(\frac{N}{c_L}z - k\Bigr)(G_{k+1}(t) - G_k(t)) \Bigr] \dz\notag\\ 
		&= - \int_0^{c_L} \left( K' \ast \rho^N_L (t,X(t,z)) \right)^2 v^N_L(t,X(t,z))\dz \notag\\
		& + \int_0^{c_L} K'\ast \rho^N_L (t,X(t,z)) \big[K' \ast \rho^N_L (t,X(t,z))v^N_L(t,X(t,z)) -  {K^\lin}'(\rho^N_L,v^N_L)\ast \rho^N_L (t,X(t,z)) \big] \dz \notag\\
		& - \frac{N}{c_L} \sum_{k=0}^{N-1} \int_{kc_L/N}^{(k+1)c_L/N} K'\ast \rho^N_L (t,X(t,z))\Bigl[ G_k(t)+ \Bigl(\frac{N}{c_L}z - k\Bigr)(G_{k+1}(t) - G_k(t)) \Bigr] \dz. \notag
	\end{align}
	where we used the explicit formula \eqref{eq:dtx} as in the proof of Proposition \ref{prop:approxpde}.
	


	On the other hand, observing that the ODE~\eqref{eq:ode} can be rewritten as
	\begin{equation*}
		\dot x_k(t) =-\int_{\T_L} {K^\lin}'(\rho^N_L, v^N_L)(x_k-y)\rho^N_L(t,y)\dy-\frac{N}{c_L}G_k(t),
	\end{equation*}
	the first term of the r.h.s. of~\eqref{eq:derformv} can be explicitly computed obtaining
	\begin{align*}
		\frac{d}{dt}\int_{\T_L}&  W_v(\rho^N_L(t,x)) \dx =\frac{d}{dt}\sum_{k=0}^{N-1}(x_{k+1}(t)-x_k(t)) W_v(\rho_k(t))\\
		&=\sum_{k=0}^{N-1}(\dot{x}_{k+1}(t)-\dot{x}_k(t))\phi_v(\rho_k(t))\\
		&=\sum_{k=0}^{N-1}\dot{x}_k(t)G_k(t)\\
		&= -\frac{N}{c_L} \sum_{k=0}^{N-1}  G^2_k(t) -  \sum_{k=0}^{N-1}  G_k(t)  {K^\lin}'(\rho^N_L,v^N_L)\ast \rho^N_L (t,x_k)   \\
		&= -\frac{N^2}{c_L^2}  \sum_{k=0}^{N-1} \int_{kc_L/N}^{(k+1)c_L/N}  G^2_k(t)\dz -\frac{N}{c_L} \sum_{k=0}^{N-1} \int_{kc_L/N}^{(k+1)c_L/N}  G_k(t) K' \ast \rho^N_L (t,X(t,z))v^N_L(t,X(t,z)) \dz\\
		& - \frac{N}{c_L} \sum_{k=0}^{N-1} \int_{kc_L/N}^{(k+1)c_L/N}  G_k(t) [ K' \ast \rho^N_L (t,X(t,z))v^N_L(t,X(t,z))  -  {K^\lin}'(\rho^N_L,v^N_L) \ast \rho^N_L (t,x_k) ] \dz.
	\end{align*}

	Then,
	\begin{align}\label{eq:ender}
		\notag
		\frac{d}{dt}\Fcal^v_{L}&(\rho^N_L(t)) \leq  - \sum_{k=0}^{N-1} \int_{kc_L/N}^{(k+1)c_L/N}  \frac{N^2}{c_L^2} G^2_k(t) + \bigl(K'\ast \rho^N_L (t,X(t,z))\bigr)^2v^N_L(t,X(t,z)) \dz \notag\\
		&-\sum_{k=0}^{N-1}\int_{kc_L/N}^{(k+1)c_L/N} \frac{N}{c_L}G_k(t)K'\ast \rho^N_L (t,X(t,z))(1+v^N_L(t,X(t,z)))\dz\notag\\
		& - \sum_{k=0}^{N-1} \int_{kc_L/N}^{(k+1)c_L/N} K'\ast \rho^N_L (t,X(t,z)) \frac{N}{c_L}\Bigl(\frac{N}{c_L}z - k\Bigr)(G_{k+1}(t) - G_k(t)v_k(t)) \dz \notag\\
		\notag
		& -  \sum_{k=0}^{N-1} \int_{kc_L/N}^{(k+1)c_L/N} \frac{N}{c_L} G_k(t) [ K' \ast \rho^N_L (t,X(t,z))v^N_L(t,X(t,z))  -  {K^\lin}'(\rho^N_L,v^N_L) \ast \rho^N_L (t,x_k) ]\dz \\
		&- \sum_{k=0}^{N-1} \int_{kc_L/N}^{(k+1)c_L/N} K' \ast \rho^N_L (t,X(t,z)) \big[K'\ast \rho^N_L (t,X(t,z)) v^N_L(t,X(t,z))-  {K^\lin}'(\rho^N_L,v^N_L) \ast \rho^N_L (t,X(t,z)) \big] \dz\notag\\
		&=A_1+A_2+A_3+A_4+A_5,
	\end{align}
where
\begin{align*}
	A_1&:=   - \sum_{k=0}^{N-1} \int_{kc_L/N}^{(k+1)c_L/N}  \frac{N^2}{c_L^2} G^2_k(t) + \bigl(K'\ast \rho^N_L (t,X(t,z))\bigr)^2v^N_L(t,X(t,z)) \dz  \notag\\
	A_2&=-\sum_{k=0}^{N-1}\int_{kc_L/N}^{(k+1)c_L/N} \frac{N}{c_L}G_k(t)K'\ast \rho^N_L (t,X(t,z))(1+v^N_L(t,X(t,z)))\dz\notag\\
	A_3&:=- \sum_{k=0}^{N-1} \int_{kc_L/N}^{(k+1)c_L/N} K'\ast \rho^N_L (t,X(t,z)) \frac{N}{c_L}\Bigl(\frac{N}{c_L}z - k\Bigr)(G_{k+1}(t)- G_k(t)) \dz \notag\\
	A_4&:=-  \sum_{k=0}^{N-1} \int_{kc_L/N}^{(k+1)c_L/N} \frac{N}{c_L} G_k(t) [ K' \ast \rho^N_L (t,X(t,z))v^N_L(t,X(t,z))  -  {K^\lin}'(\rho^N_L,v^N_L) \ast \rho^N_L (t,x_k) ]\dz \notag\\
A_5&:=- \sum_{k=0}^{N-1} \int_{kc_L/N}^{(k+1)c_L/N} K' \ast \rho^N_L (t,X(t,z)) \big[K'\ast \rho^N_L (t,X(t,z))v^N_L(t,X(t,z)) -  {K^\lin}'(\rho^N_L,v^N_L) \ast \rho^N_L (t,X(t,z)) \big] \dz.
\end{align*}
Let us estimate all the above terms.

\textbf{Step 2 (Estimate of the term $A_5$)}
 The last term in \eqref{eq:ender} can be estimated by Lemma \ref{lemma:kbounds} as follows
	\begin{align}
|A_5|\leq{C \|K' \|_{L^\infty} \|v \|_{\infty} \big(  \|K'' \|_{L^\infty}L + \|K' \|_{L^\infty} \big) \frac{c_L^2}{N}.} \label{eq:A5}
\end{align}
\textbf{Step 3 (Estimate of the term $A_4$)}
 Let us now consider the term $A_4$. Similarly to the proof of Lemma \ref{lemma:kbounds}, using the definitions of the various quantities, one has that 
 \begin{align}
 	|&A_4|\leq 	\Bigl| \sum_{k=0}^{N-1} \int_{kc_L/N}^{(k+1)c_L/N} \frac{N}{c_L}G_k(t) v_k(t) \sum_{j\neq k} \int_{x_j}^{x_{j+1}} \big[K'(x_k(t) - x_j(t)) - K'(X(t,z)-y) \big] \rho_j(t)  \dy \dz\Bigr|\notag\\
 	&+\Bigg|\sum_{k=0}^{N-1} \int_{kc_L/N}^{(k+1)c_L/N} \frac{N}{c_L}G_k(t) v_k(t)  \int_{x_k}^{x_{k+1}} K'(X(t,z))-y) \rho_k(t) \dy \dz\Bigg|\notag\\
 	&+\Bigg|\sum_{k=0}^{N-1} \int_{kc_L/N}^{(k+1)c_L/N} \frac{N}{c_L}G_k(t) \left(\frac{N}{c_L}z - k \right) \frac{c_L}{N} \Bigl( v_{k+1}(t) \sum_{j\neq k+1} K'(x_{k+1} - x_j) - v_k(t) \sum_{j\neq k} K'(x_k - x_j)  \Bigr) \dz\Bigg|.\label{eq:a3est}
 \end{align}
Then observe that, by H\"older inequality and the estimates used in Lemma \ref{lemma:kbounds}
\begin{align}
	\Bigl| &\sum_{k=0}^{N-1} \int_{kc_L/N}^{(k+1)c_L/N} \frac{N}{c_L}G_k(t) v_k(t) \sum_{j\neq k} \int_{x_j}^{x_{j+1}} \big[K'(x_k(t) - x_j(t)) - K'(X(t,z)-y) \big] \rho_j(t)  \dy \dz\Bigr|\leq\notag\\
	&\leq\Bigg(\sum_{k=0}^{N-1}\int_{kc_L/N}^{(k+1)c_L/N}\Bigl|\frac{N}{c_L}G_k(t)\Bigr|^2v_k(t)\dz\Bigg)^{1/2}\cdot\notag\\
	&\cdot\Bigg(\sum_{k=0}^{N-1}\int_{kc_L/N}^{(k+1)c_L/N}\Bigg(\sum_{j \neq k}\int_{x_j}^{x_{j+1}}\big[K'(x_k(t) - x_j(t)) - K'(X(t,z)-y) \big] \rho_j(t) \dy\Bigg)^2v_k(t) dz\Bigg)^{1/2}\notag\\
	&\leq C\|K''\|_{L^\infty}\|v\|_{L^\infty}\Bigg(\sum_{k=0}^{N-1}\int_{kc_L/N}^{(k+1)c_L/N}\Bigl|\frac{N}{c_L}G_k(t)\dz\Bigr|^2\Bigg)^{1/2}\Bigl(\frac{L}{\sqrt{N}}+\frac{c_LL}{N}\Bigr).
\end{align}

Denoting for simplicity
\[  a_k = \frac{N}{c_L} G_k(t) , \qquad b_k = K' \ast \rho^N_L (t,X(t,z)) \chi_{[kc_L/N, (k+1)c_L/N)}(z),   \]
let us define
\begin{equation}
	\Omega_{k}=\bigl\{z\in[kc_L/N,(k+1)c_L/N]:\,|a_k|<2|b_k|\bigr\}.
\end{equation}

On the one hand, one has that 
\begin{align}
	\Bigl| &\sum_{k} \int_{[kc_L/N,(k+1)c_L/N]\cap \Omega_k} \frac{N}{c_L}G_k(t)v_k(t)  \sum_{j\neq k} \int_{x_j}^{x_{j+1}} \big[K'(x_k(t) - x_j(t)) - K'(X(t,z)-y) \big] \rho_j(t)  \dy \dz\Bigr|\leq\notag\\
	&\leq C\|K''\|_{L^\infty}\|K'\|_{L^\infty}\|v\|_{L^\infty}\frac{L}{\sqrt{N}}.
	\end{align}
On the other hand,
\begin{align}
	\Bigl| &\sum_{k} \int_{[kc_L/N,(k+1)c_L/N]\cap\omega_k^c} \frac{N}{c_L}G_k(t) v_k(t) \sum_{j\neq k} \int_{x_j}^{x_{j+1}} \big[K'(x_k(t) - x_j(t)) - K'(X(t,z)-y) \big] \rho_j(t)  \dy \dz\Bigr|\leq\notag\\
	&\leq C\|K''\|_{L^\infty}\|v\|_{L^\infty}\frac{L}{\sqrt{N}}\left( \sum_{k} \int_{[kc_L/N,(k+1)c_L/N]\cap\Omega_k^c}  |a_k|^2 \dz \right)^{1/2}.
	\end{align}
To estimate the above there are two cases. Either 
\[
\left(\sum_{k} \int_{[kc_L/N,(k+1)c_L/N]\cap\Omega_k^c}  |a_k|^2 \dz \right)^{1/2}\leq1
\]
and then 
\begin{align}
	\Bigl| \sum_{k} \int_{[kc_L/N,(k+1)c_L/N]\cap\Omega_k^c} \frac{N}{c_L}G_k(t) v_k(t) &\sum_{j\neq k} \int_{x_j}^{x_{j+1}} \big[K'(x_k(t) - x_j(t)) - K'(X(t,z)-y) \big] \rho_j(t)  \dy \dz\Bigr|\leq\notag\\
	&\leq C\|K''\|_{L^\infty}\|v\|_{L^\infty}\frac{L}{\sqrt{N}}
	\end{align}
or 
\[
\left(\sum_{k} \int_{[kc_L/N,(k+1)c_L/N]\cap\Omega_k^c}  |a_k|^2 \dz \right)^{1/2}>1
\]
and then 
\begin{align}
	\Bigl| &\sum_{k} \int_{[kc_L/N,(k+1)c_L/N]\cap\Omega_k^c} \frac{N}{c_L}G_k(t) v_k(t) \sum_{j\neq k} \int_{x_j}^{x_{j+1}} \big[K'(x_k(t) - x_j(t)) - K'(X(t,z)-y) \big] \rho_j(t)  \dy \dz\Bigr|\leq\notag\\
	&\leq C\|K''\|_{L^\infty}\|v\|_{L^\infty}\frac{L}{\sqrt{N}} \sum_{k} \int_{[kc_L/N,(k+1)c_L/N]\cap\Omega_k^c}  |a_k|^2 \dz\notag\\
		&\leq C\|K''\|_{L^\infty}\|v\|_{L^\infty}\frac{L}{\sqrt{N}} \sum_{k=0}^{N-1} \int_{kc_L/N}^{(k+1)c_L/N}  \frac{N^2}{c_L^2} G^2_k(t)\dz.\label{eq:a3est1}
	\end{align}
When $N$ is sufficiently large, this term will be then much smaller in absolute value than the first term in $A_1$.   . 

Analogously, the second term in \eqref{eq:a3est} can be estimated as follows
\begin{align}
	\Bigg|&\sum_{k=0}^{N-1} \int_{kc_L/N}^{(k+1)c_L/N} \frac{N}{c_L}G_k(t) v_k(t)  \int_{x_k}^{x_{k+1}} K'(X(t,x)-y) \rho_k(t) \dy \dz\Bigg|\leq\notag\\
	&\leq C\|K'\|_{L^\infty}\|v\|_{L^\infty}\frac{c_L}{N} \Bigg(\sum_{k=0}^{N-1}\int_{kc_L/N}^{(k+1)c_L/N}\Bigl|\frac{N}{c_L}G_k(t)\Bigr|^2\dz\Bigg)^{1/2}\notag\\
	&\leq C\|K'\|_{L^\infty}\|v\|_{L^\infty}\frac{c_L}{N}\Bigl(1+\sum_{k=0}^{N-1} \int_{kc_L/N}^{(k+1)c_L/N}  \frac{N^2}{c_L^2} G^2_k(t)\dz\Bigr).\label{eq:a3est2}
\end{align}

Then, let us consider the third term in \eqref{eq:a3est}. We have that, by H\"older inequality and a reasoning similar to how we estimated the other two terms
\begin{align}
	\Bigg|&\sum_{k=0}^{N-1} \int_{kc_L/N}^{(k+1)c_L/N} \frac{N}{c_L}G_k(t) \left(\frac{N}{c_L}z - k \right) \frac{c_L}{N} \Bigl( v_{k+1}(t) \sum_{j\neq k+1} K'(x_{k+1} - x_j) - v_k(t) \sum_{j\neq k} K'(x_k - x_j)  \Bigr) \dz\Bigg|\leq\notag\\
	&\leq \Bigg(\sum_{k=0}^{N-1} \int_{kc_L/N}^{(k+1)c_L/N} \Bigl|\frac{N}{c_L}G_k(t) \Bigr|^2\dz\Bigg)^{1/2}\cdot\notag\\
	&\cdot\Bigg(\sum_{k=0}^{N-1} \int_{kc_L/N}^{(k+1)c_L/N}\Bigl(\frac{N}{c_L}z-k\Bigr)^2\frac{c_L^2}{N^2}\Bigl(v_{k+1}(t)\sum_{j\neq k+1} K'(x_{k+1} - x_j) - v_k(t) \sum_{j\neq k} K'(x_k - x_j)  \Bigr)^2\Bigg)^{1/2}\notag\\
	&\leq C\frac{c_L}{\sqrt{N}}\|v\|_{L^\infty}\|K'\|_{L^\infty} \Bigg(\sum_{k=0}^{N-1} \int_{kc_L/N}^{(k+1)c_L/N} \Bigl|\frac{N}{c_L}G_k(t) \Bigr|^2\dz\Bigg)^{1/2}\notag\\
	&\leq C\frac{c_L}{\sqrt{N}}\|v\|_{L^\infty}\|K'\|_{L^\infty}\Bigl(1+\sum_{k=0}^{N-1} \int_{kc_L/N}^{(k+1)c_L/N}  \frac{N^2}{c_L^2} G^2_k(t)\dz\Bigr).\label{eq:a3est3}
\end{align}

From \eqref{eq:a3est1},\eqref{eq:a3est2} and \eqref{eq:a3est3} we conclude that
\begin{align}\label{eq:A4}
	|A_4|&\leq C(\|K'\|_{L^\infty}+\|K''\|_{L^\infty})\|v\|_{L^\infty}\frac{L}{\sqrt{N}}\Bigl(1+\sum_{k=0}^{N-1} \int_{kc_L/N}^{(k+1)c_L/N}  \frac{N^2}{c_L^2} G^2_k(t)\dz\Bigr).
\end{align}

\textbf{Step 4 (Estimate of the term $A_3$)}
Let us now consider the term $A_3$. One has that
\begin{align}
	|A_3|&=\Bigl|\sum_{k=0}^{N-1} \int_{kc_L/N}^{(k+1)c_L/N} K'\ast \rho^N_L (t,X(t,z)) \frac{N}{c_L}\Bigl(\frac{N}{c_L}z - k\Bigr)(G_{k+1}(t)- G_k(t)) \dz\Bigr|\notag\\
	&=\Bigl|\sum_{k=0}^{N-1} G_k(t)\Bigl(\int_{kc_L/N}^{(k+1)c_L/N} K'\ast \rho^N_L (t,X(t,z)) \frac{N}{c_L}\Bigl(\frac{N}{c_L}z - k\Bigr)\dz\notag\\
	&-\int_{(k-1)c_L/N}^{(k)c_L/N} K'\ast \rho^N_L (t,X(t,z)) \frac{N}{c_L}\Bigl(\frac{N}{c_L}z - (k-1)\Bigr)\dz\Bigr) \Bigr|\notag\\
	&=\Bigl|\sum_{k=0}^{N-1} \int_{kc_L/N}^{(k+1)c_L/N}\frac{N}{c_L} G_k(t)\Bigl(K'\ast \rho^N_L (t,X(t,z))-K'\ast \rho^N_L (t,X(t,z-c_L/N))\Bigr)\Bigl(\frac{N}{c_L}z - k\Bigr)\dz\Bigr|\notag\\
	&\leq C\|K''\|_{L^\infty}\|v\|_{L^\infty}\frac{L}{\sqrt{N}}\Bigg(\sum_{k=0}^{N-1} \int_{kc_L/N}^{(k+1)c_L/N} \Bigl|\frac{N}{c_L}G_k(t) \Bigr|^2\dz\Bigg)^{1/2}\notag\\
	&\leq C\|K''\|_{L^\infty}\|v\|_{L^\infty}\frac{L}{\sqrt{N}}\Bigl(1+\sum_{k=0}^{N-1} \int_{kc_L/N}^{(k+1)c_L/N}  \frac{N^2}{c_L^2} G^2_k(t)\dz\Bigr).\label{eq:A3}
\end{align}

\textbf{Step 5 (Estimate of the term $A_2$)}
	
	Let us now consider the term $A_2$. By the H\"older inequality and using the fact that $K'\in L^\infty(\R)$, one has that
	
	\begin{align}
		|A_2|&\leq \Biggl(\sum_{k=0}^{N-1}\int_{kc_L/N}^{(k+1)c_L/N}\frac{N^2}{c_L^2}G_k^2(t)\dz\Biggr)^{1/2}\cdot\Bigl(\int_0^{c_L}\bigl(K'\ast\rho^N_L(t,X(t,z))\bigr)^2\dz\Bigr)^{1/2}(1+\|v\|_{L^\infty})\notag\\
		&\leq c_L\|K'\|_{L^\infty}(1+\|v\|_{L^\infty})\Biggl(\sum_{k=0}^{N-1}\int_{kc_L/N}^{(k+1)c_L/N}\frac{N^2}{c_L^2}G_k^2(t)\dz\Biggr)^{1/2}.\label{eq:A2}
	\end{align}

\textbf{Step 6 (Conclusion)}

From \eqref{eq:ender}, \eqref{eq:A5}, \eqref{eq:A4},  \eqref{eq:A3} and \eqref{eq:A2} we obtain, for $N$ sufficiently large, 
\begin{align}
	\frac{d}{dt}\Fcal^v_L(\rho^N_L(t))&\leq -\sum_{k=0}^{N-1}\int_{kc_L/N}^{(k+1)c_L/N}\frac{N^2}{c_L^2}G_k^2(t)\dz \notag\\
	&+C_1\bigl(\|K'\|_{L^\infty}, \|K''\|_{L^\infty}, \|v\|_{L^\infty})\frac{L}{\sqrt{N}}\Bigl(1+\sum_{k=0}^{N-1}\int_{kc_L/N}^{(k+1)c_L/N}\frac{N^2}{c_L^2}G_k^2(t)\dz\Bigr)\notag\\
	& +C_2\bigl(\|K'\|_{L^\infty},\|v\|_{L^\infty}\bigr)\Biggl(1+\Biggl(\sum_{k=0}^{N-1}\int_{kc_L/N}^{(k+1)c_L/N}\frac{N^2}{c_L^2}G_k^2(t)\dz\Biggr)^{1/2}\Biggr).
\end{align} 
\end{proof}
\section{$L^\infty$ bound}

The aim of this section is to prove Theorem \ref{thm:linfty}.

The proof is quite different from the one given in \cite{DRR} which deals with  the case of constant nonlinear mobility. It is based on an $L^\infty$ bound on the function $\phi_v(\rho^N_L)$.

The following lemma guarantees the existence of a minimal time $T_0>0$ on which the deterministic particle approximation is well-defined (depending only on the $L^\infty$ norm of the initial datum).
\begin{lemma}\label{lemma:mininterv}
	Let $\rho_{0,L}\in L^\infty(\T_L)$, with $\int_{\T_L}\rho_{0,L}=c_L$. Then there exists $T_0=T_0(\|\rho_{0,L}\|_{L^\infty},\|K'\|_{L^\infty},\|v\|_{L^\infty})>0$ such that the deterministic particle approximation $\{\rho^N_L\}_{N\in\N}$ defined starting from $\rho_{0,L}$ is well-defined on $[0,T_0)$.
\end{lemma}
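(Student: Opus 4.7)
\noindent\emph{Proof plan.} The right-hand side of the ODE system \eqref{eq:ode} is continuous on the open set where the particles are strictly ordered, and the initial configuration lies in that set because $\rho_{0,L}\in L^\infty(\T_L)$ has no atoms and thus $x_0(0)<\cdots<x_{N-1}(0)$. By Peano's theorem a local solution exists; let $T^*>0$ denote the maximal time on which the strict ordering is preserved. Since $d_k(t):=x_{k+1}(t)-x_k(t)=c_L/(N\rho_k(t))$, loss of strict ordering is equivalent to blow-up of $M(t):=\max_k\rho_k(t)$. My plan is to establish a differential inequality of the form $\dot M\leq C_1 M+C_2 M^2$ with constants independent of $N$, then compare with the scalar ODE $\dot Y=C_1 Y+C_2 Y^2$ starting at $\|\rho_{0,L}\|_{L^\infty}$, whose blow-up time $T_0=T_0(\|\rho_{0,L}\|_{L^\infty},\|K'\|_{L^\infty},\|K''\|_{L^\infty},\|v\|_{L^\infty})>0$ furnishes the claimed lifespan via a standard continuation argument.

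The initial bound $M(0)\leq\|\rho_{0,L}\|_{L^\infty}$ is immediate: by construction of the initial particles, $\rho_k(0)$ equals the average of $\rho_{0,L}$ on $[x_k(0),x_{k+1}(0)]$. For the differential inequality, at a time where $M$ is differentiable and an index $k^*$ attains the maximum, two algebraic cancellations are decisive. First, since $v$ is non-increasing and $\rho_{k^*}\geq\rho_{k^*\pm 1}$, the definition $v_k=v(\max(\rho_k,\rho_{k-1}))$ yields $v_{k^*}=v_{k^*+1}=v(\rho_{k^*})$. Second, since $\phi_v'(s)=sW''(s)v(s)\geq 0$, the function $\phi_v$ is non-decreasing, so the discrete second difference $G_{k^*+1}-G_{k^*}=\phi_v(\rho_{k^*+1})-2\phi_v(\rho_{k^*})+\phi_v(\rho_{k^*-1})$ is non-positive, whence the diffusive contribution to $\dot d_{k^*}$ is non-negative and may be discarded.

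Using the chain rule $\dot\rho_k=-(N/c_L)\rho_k^2\dot d_k$ together with the ODE \eqref{eq:ode}, the non-local part of $\dot d_{k^*}$ equals $-(c_L/N)v(\rho_{k^*})\bigl[\sum_{j\neq k^*+1}K'(x_{k^*+1}-x_j)-\sum_{j\neq k^*}K'(x_{k^*}-x_j)\bigr]$. The bracket decomposes into $N-2$ telescoping increments $K'(x_{k^*+1}-x_j)-K'(x_{k^*}-x_j)$, each bounded by $\|K''\|_{L^\infty}d_{k^*}$ because $K'$ is $C^1$ away from zero and no particle lies between $x_{k^*}$ and $x_{k^*+1}$, plus two boundary terms controlled by $2\|K'\|_{L^\infty}$. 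Substituting $d_{k^*}=c_L/(N\rho_{k^*})$ and multiplying by $N\rho_{k^*}^2/c_L$ then yields $\dot M(t)\leq C_1 M(t)+C_2 M(t)^2$ a.e., with $C_1,C_2$ depending only on $\|v\|_{L^\infty},\|K'\|_{L^\infty},\|K''\|_{L^\infty}$ (and $c_L\leq 1$), which completes the scheme. The main obstacle is precisely this cancellation of the spurious factor $N$ in the non-local term: a naive pointwise bound $|\sum K'|\leq N\|K'\|_{L^\infty}$ would produce an $N$-dependent blow-up time, and only the combination of $v_{k^*}=v_{k^*+1}$ (monotonicity of $v$) with the mean-value estimate via $\|K''\|_{L^\infty}$ removes it; a minor technicality is that $M$ is only Lipschitz (not $C^1$) across transitions of the argmax, which is handled using the upper Dini derivative of $M$.
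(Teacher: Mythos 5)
Your proof is correct and follows essentially the same route as the paper's (the proof of Lemma~\ref{lemma:mininterv} is omitted here with a pointer to Lemma~4.6 of \cite{DRR}, but the identical maximum-principle computation --- $v_{k^*}=v_{k^*+1}=v(\rho_{k^*})$ at the argmax, nonpositivity of the second difference $G_{k^*+1}-G_{k^*}$, and telescoping of the kernel sums via $\|K''\|_{L^\infty}$ to kill the spurious factor $N$ --- appears verbatim in the paper's proof of Theorem~\ref{thm:linfty}, leading to the same Riccati-type comparison). The only discrepancy is that your $T_0$ inevitably also depends on $\|K''\|_{L^\infty}$, which the lemma's parameter list omits; this is a harmless imprecision in the statement rather than a flaw in your argument, since the paper's own version of the estimate uses $\|K''\|_{L^\infty}$ in exactly the same place.
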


The proof is analogous to Lemma 4.6 in \cite{DRR}, therefore we omit it. 

In the proof of Theorem \ref{thm:linfty} below we show that $T_0$ can be prolonged to be larger than any fixed $T>0$, provided the number of particles in the approximation is sufficiently large.

\begin{proof}
	[Proof of Theorem \ref{thm:linfty}: ] By definition of the deterministic particle approximation $\rho^N_L$ and the ODE \eqref{eq:ode} one has that
	\begin{align}
		\partial_t\phi_v(\rho_k)&=-\frac{N}{c_L}\phi'(\rho_k)v(\rho_k)\rho_k^2(\dot{x}_{k+1}-\dot{x}_k)\\&=-\phi'(\rho_k)v(\rho_k)\rho_k^2\Bigl[-v_{k+1}(t)\sum_{j \neq k+1}K'(x_{k+1}(t)-x_j(t))+v_k(t)\sum_{i \neq k}K'(x_k(t)-x_j(t))\notag\\
		&-\frac{N^2}{c_L^2}(G_{k+1}(t)-G_k(t))\Bigr].
	\end{align}
Assume $\rho_k(t)=\max\{\rho_j(t)\}_j$. Then, $\rho_k(t)\geq\max(\rho_{k-1}(t),\rho_{k+1}(t))$ and  $v_{k+1}(t)=v_k(t)=v(\rho_k(t))$ which implies
\begin{align}
		\partial_t\phi_v(\rho_k)&= \phi'(\rho_k)v(\rho_k)\rho_k^2\Biggl[v(\rho_k)\Bigl(\sum_{j \neq k+1}K'(x_{k+1}(t)-x_j(t))-\sum_{i \neq k}K'(x_k(t)-x_j(t))\Bigr)+\frac{N^2}{c_L^2}(G_{k+1}(t)-G_k(t))\Biggr].
\end{align}
Now notice that, by monotonicity of $\phi_v$ and maximality of $\rho_k(t)$, $G_{k+1}(t)-G_k(t)\leq 0$, thus
\begin{align}
	\partial_t\phi_v(\rho_k)&\leq \phi'(\rho_k)\rho_k^2v^2(\rho_k)\Bigl[\sum_{j \neq k+1,k}[K'(x_{k+1}(t)-x_j(t))-K'(x_k(t)-x_j(t))]+2K'(x_{k+1}(t)-x_k(t))\Bigr].
\end{align}
Hence, since $K'$ is Lipschitz outside the origin and moreover   $\phi'(\rho)\rho\leq C\phi(\rho)$ and $\rho\leq \phi(\rho)$ whenever $\rho\geq 1$ (as we can assume it is the case for $\rho_k$), 
one has that
\begin{align}
		\partial_t\phi_v(\rho_k)&\leq CN(x_{k+1}(t)-x_k(t))\rho_k(t)\|K''\|_{L^\infty}\phi(\rho_k)v^2(\rho_k)+2\|K'\|_{L^\infty}\phi^2(\rho_k)v^2(\rho_k)\notag\\
		&\leq C\|K''\|_{L^\infty}\|v\|_{L^\infty}\phi(\rho_k)v(\rho_k)+2\|K'\|_{L^\infty}\phi^2(\rho_k)v^2(\rho_k).\label{eq:ineqquad}
\end{align}
Now notice that
\begin{equation}
	\phi_v(\rho)=\int_0^{\rho}\phi'(s)v(s)\ds=\phi(\rho)v(\rho)-\int_0^{\rho}\phi(s)v'(s)\ds\geq\phi(\rho)v(\rho),
\end{equation}
thus \eqref{eq:ineqquad} becomes
\begin{align}
	\partial_t\phi_v(\rho_k)&\leq C\|K''\|_{L^\infty}\|v\|_{L^\infty}\phi_v(\rho_k)+2\|K'\|_{L^\infty}\phi_v^2(\rho_k).
\end{align}
In order to control the second tern in the r.h.s. of \eqref{eq:ineqquad}, we use the triangle inequality,  and get
\begin{align}
		\partial_t\phi_v(\rho_k)
		&\leq C\|K''\|_{L^\infty}\|v\|_{L^\infty}\phi_v(\rho_k)\notag\\
		&+4\|K'\|_{L^\infty}\Bigl[\phi^2_v(\rho_{k_0})+N\sum_{j=k_0+1}^k(\phi_v(\rho_{j})-\phi_v(\rho_{j-1}))^2\Bigr],
\end{align}
for some $k_0\leq k$. In particular, choosing $k_0=k_0(N,t)$ such that $\rho_{k_0}\leq \bar C$ for a fixed constant $\bar C\geq c_L/L$ (thanks to the relation $\sum_{j=0}^{N-1}(x_{j+1}-x_j)=L$), and integrating the above inequality in time between $t_0$ and $t$, in order to bound $\phi_v(\rho^N_L)$ in $L^\infty$ one is left to bound the quantity
\begin{equation}\label{eq:q1}
N^2\int_{t_0}^t\sum_{j=k_0+1}^k\int_{c_Lj/N}^{c_L(j+1)/N}(\phi_v(\rho_j(s))-\phi_v(\rho_{j-1}(s)))^2\dz\ds.
\end{equation}
A uniform bound for \eqref{eq:q1} is given by \eqref{eq:stimaw12} in Corollary \ref{cor:energybound}, thus giving a uniform $L^\infty$ bound on $[t_0,t]\times[0,L]$ for the functions $\{\phi_v(\rho^N_L(t,x))\}$.

In order to get an $L^\infty$ bound for the functions $\rho^N_L$, let us now distinguish two cases: either
\begin{equation}\label{eq:1phi}
	\phi_v(s)\to+\infty\quad \text{for $s\to+\infty$},
\end{equation}
or 
\begin{equation}\label{eq:2phi}
	\exists \,\bar c<+\infty:\quad\phi_v(s)\leq\bar c\quad\forall\,s.
\end{equation}
If \eqref{eq:1phi} holds, then the $L^\infty$ bound on $\phi_v(\rho^N_L)$ implies an $L^\infty$ bound on $\rho^N_L$.

If instead \eqref{eq:2phi} holds, our first claim is that 
\begin{equation}
	\label{eq:boundsvs}
	\exists\,c:\quad sv(s)\leq c,\quad\forall\,s.
\end{equation}
Indeed, letting $f(s):=sv(s)$, one has that
\begin{align}
	f(s)&\leq f(0)+\int_0^sf'(\tau)\d\tau\notag\\
	&\leq\int_0^s\bigl[v(\tau)+\tau v'(\tau)\bigr]\d\tau\notag\\
	&{\leq}\int_0^sv(\tau)\d\tau\notag\\
	&\leq\phi_v(s)+\|v\|_{\infty}\notag\\
	&\leq \bar c+\|v\|_{\infty},
\end{align}
where we used that $v'\leq0$ and \eqref{eq:2phi}.
Thus reasoning as above 
for a maximum value $\rho_k$
\begin{align}
	\partial_t\rho_k&=-N\rho_k^2(\dot{x}_{k+1}-\dot{x}_k)\leq\rho_k^2v(\rho_k)\Bigl(\sum_{j \neq k+1}K'(x_{k+1}(t)-x_j(t))-\sum_{i \neq k}K'(x_k(t)-x_j(t))\Bigr)\notag\\
	&\leq CN(x_{k+1}(t)-x_k(t))\rho_k(t)v(\rho_k(t))\rho_k(t)+2\|K'\|_{\infty}\rho_k^2(t)v(\rho_k(t))\notag\\
	&\leq c_1+c_2\rho_k(t),
\end{align}
thus $\rho_k(t)\leq \bar C(K,\|v\|_{\infty},T)$ for all $t\leq T$ and the proof of the Theorem is completed.

\end{proof}

\section{Compactness in the $L^1$ topology}.

In this section we discuss the strong $L^1$-compactness in space and time of the following functions:
\begin{itemize}
	\item $\{\phi_v(\rho^N_L)\}_{N\geq\bar N}$  (see Theorem \ref{thm:compactness});
	\item $\{\phi_v(\rho_L)\}_{L>0}$  (see Theorem~\ref{thm:compactness2}).
\end{itemize} 
Under the $L^\infty$ bound on $\rho^N_L$ proved in Theorem \ref{thm:linfty} and the monotonicity of $\phi_v$, the above will also imply strong $L^1$-compactness of the functions $\rho^N_L$ and  $\rho_L$.

In order to show compactness of the approximate solutions we will use the following generalized Aubin-Lions Lemma given in Theorem 2 of~\cite{RossiSav}. Before recalling it, let us introduce the following definitions. 

Let $X$ be a separable Banach space.  We recall that a functional $\mathcal G: X\to[0,+\infty]$ is a \emph{normal integrand} if it is l.s.c. with respect to the Borel $\sigma$-algebra $\mathcal B(X)$.
$\mathcal G$ is also \emph{coercive} if the sublevels $\{v \in X : \mathcal G (v) \leq c\}$ are compact for any $c\geq0$. 

A pseudo-distance $g : X \times X \to [0, +\infty]$ is compatible with $\mathcal G$  if for every $v,w$ such that $g(v,w)=0$ and $\mathcal G(v)<+\infty$, $\mathcal G(w)<+\infty$ then $v=w$.

We now recall Theorem 2 of~\cite{RossiSav} in a simplified form which is sufficient for our purposes.

\begin{theorem}\cite{RossiSav}\label{thm:aubinlions}
	Let $X$ be a separable Banach space. Let $\mathcal U$ be a set  of measurable functions $u:(0,T)\to X$, let $\mathcal G : X \to [0, +\infty]$ be a normal coercive integrand  and 	$g : X \times X \to [0, +\infty]$ be a l.s.c. pseudo-distance compatible with $\mathcal G$. Assume moreover that
	\begin{equation}\label{eq:tight}
		\sup_{u\in\mathcal U}\int_0^T\mathcal G(u(t))\dt<+\infty
	\end{equation}
	and
	\begin{equation}\label{eq:gholder}
		\lim_{h\to0}\sup_{u\in\mathcal U}\int_0^{T-h}g(u(t+h),u(t))\dt=0.
	\end{equation}
	Then $\mathcal U$ contains a sequence $u_n$ which converges in measure (w.r.t. $t\in(0,T)$ and with values in $X$) to a limit $u:~(0,T)\to~X$.
	
\end{theorem}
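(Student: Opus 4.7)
The plan is to combine the spatial compactness provided by the coercive integrand $\mathcal{G}$ with the integrated time-equicontinuity measured by $g$, and extract a subsequence converging in measure by a diagonal argument followed by an approximation argument that relies crucially on the compatibility of $g$ with $\mathcal{G}$.

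First, I would exploit the tightness assumption \eqref{eq:tight} together with the coercivity of $\mathcal{G}$ to obtain a form of uniform pointwise-in-time compactness. Specifically, for every $\delta>0$, Markov's inequality applied to \eqref{eq:tight} produces a level $K_\delta$ such that the ``bad set'' $E_{u,\delta}:=\{t\in(0,T):\mathcal{G}(u(t))>K_\delta\}$ has Lebesgue measure smaller than $\delta$ uniformly in $u\in\mathcal{U}$. On the complement, all values $u(t)$ lie in the compact sublevel $\{v\in X:\mathcal{G}(v)\leq K_\delta\}$, which gives a common compact set of $X$ containing most of the trajectory for every $u\in\mathcal{U}$.

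Second, I would fix a countable dense set of times $\{t_k\}\subset(0,T)$ that avoids the bad sets up to a prescribed tolerance, and perform a standard diagonal extraction: since for each $t_k$ the sequence $\{u_n(t_k)\}$ eventually lives in a compact subset of $X$, I extract a subsequence (not relabeled) so that $u_n(t_k)\to \hat u(t_k)$ in $X$ for all $k$. The key passage is then to upgrade this pointwise convergence along a dense set to convergence in measure on the whole interval $(0,T)$. Here is where \eqref{eq:gholder} enters: the integral modulus in $g$ provides, via a Fubini and Chebyshev argument, that for each $\varepsilon>0$ one can choose $h>0$ so small that the set $\{t:g(u_n(t+h),u_n(t))>\varepsilon\}$ has arbitrarily small measure uniformly in $n$. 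Combining this with the spatial compactness one shows that any two cluster points $\hat u(t)$ and $\tilde u(t)$ of $\{u_n(t)\}$ must satisfy $g(\hat u(t),\tilde u(t))=0$; the compatibility of $g$ with $\mathcal{G}$, together with the fact that both cluster points satisfy $\mathcal{G}<+\infty$ almost everywhere (by tightness), forces $\hat u(t)=\tilde u(t)$ and hence identifies a unique limit in $X$.

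The main obstacle I anticipate is precisely this last step: because $g$ is only a pseudo-distance, one cannot directly read off convergence in $X$ from $g$-smallness, and the coercivity-plus-compatibility mechanism must be used in a subtle way to prevent spurious limits. A further technical point is matching the dense set of times $\{t_k\}$ with the compactness-away-from-bad-sets so that the diagonal extraction works globally; this is done by a Vitali-type covering of $(0,T)$ by intervals of length $h$ and letting $h\to 0$ along a sequence, whereupon the integral bound in \eqref{eq:gholder} transforms into convergence in measure on $(0,T)$ with values in $X$, completing the proof.
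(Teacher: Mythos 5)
First, a point of comparison: the paper does not prove this statement at all --- it is Theorem 2 of Rossi--Savar\'e \cite{RossiSav}, recalled in simplified form and used as a black box --- so your attempt can only be measured against the original proof. Your sketch does identify the three correct pillars of that proof: Chebyshev applied to \eqref{eq:tight} plus coercivity of $\mathcal G$ to confine $u(t)$ to a compact sublevel of $X$ outside a set of times of small measure; the integral time-equicontinuity \eqref{eq:gholder}; and the compatibility of $g$ with $\mathcal G$ to identify limits. In Rossi--Savar\'e the last ingredient is made quantitative as a separate lemma: for every $\varepsilon>0$ and $c\ge 0$ there is $\delta>0$ such that $u,w\in\{\mathcal G\le c\}$ and $g(u,w)\le\delta$ force $\|u-w\|_X\le\varepsilon$ (proved by contradiction using compactness of the sublevel, lower semicontinuity of $g$, and compatibility). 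You gesture at this mechanism but never state it, and without this uniform statement the phrase ``the compatibility must be used in a subtle way'' remains a placeholder rather than an argument.

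The step that genuinely fails as written is the diagonal extraction over a fixed countable dense set of times $\{t_k\}$. Both hypotheses \eqref{eq:tight} and \eqref{eq:gholder} are purely integral in $t$ and impose no constraint whatsoever on the value $u_n(t_k)$ at any individual time: the bad set $E_{u_n,\delta}$ has measure $<\delta$ but may contain $t_k$ for every $n$, so the claim that ``for each $t_k$ the sequence $\{u_n(t_k)\}$ eventually lives in a compact subset of $X$'' is unjustified, and no fixed dense set of ``good'' times need exist. The original proof avoids pointwise evaluation entirely: it works with a compactness criterion for the topology of convergence in measure and with quantities averaged over the shift, such as $\frac1h\int_0^h\int_0^{T-s}g(u(t+s),u(t))\,\mathrm{d}t\,\mathrm{d}s$, which \eqref{eq:gholder} does control; via the quantitative compatibility lemma above, $g$-smallness for most pairs $(t,t+s)$ is converted into $\|\cdot\|_X$-smallness and hence into the hypotheses of a Fr\'echet--Kolmogorov type criterion. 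If you replace the pointwise diagonal argument with this averaged, measure-theoretic formulation, your outline becomes essentially the Rossi--Savar\'e proof.
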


Let us now fix $C_0,C_1>0$ and let us consider any $\rho_0:\R\to[0,+\infty)$ such that $\|\rho_0\|_{L^\infty}\leq C_1/4$, $\Fcal^v(\rho_0)\leq C_0/4$ and $\int_{\R}\rho_0=1$, where
\begin{equation}
\Fcal^v(\rho)=\frac{1}{2}\int_{\R}K\ast\rho(x)\rho(x)\dx+\int_{\R}W_v(\rho(x))\dx.
\end{equation}

For any measurable function $f:\R\to[0,+\infty)$ we define 
\begin{equation}\label{eq:gL}
	(f)_L:[-L/2,L/2]\to[0,+\infty),\qquad (f)_L(x)=f(x)\chi_{[-L/2,L/2]}(x).
\end{equation}
With a slight abuse of notation in this subsection we will still denote with $(f)_L$ the corresponding periodic extension function on $\T_L$. 

Let $\rho^N_L$ be the deterministic particle approximation on $\T_L$ starting from the density $(\rho_{0})_L$, with $(\rho_0)_L$ defined as~\eqref{eq:gL} from $\rho_0$. W.l.o.g., we can assume that $N$ and $L$ are sufficiently large so that
\begin{align}\label{eq:lambdacond}
	\sup_{N,L}\bigl\|\rho^N_L(0)\bigr\|_{L^\infty}\leq C_1,\qquad \sup_{N,L}\Fcal^v_L(\rho^N_L(0))\leq C_0.
\end{align}
Indeed, the functionals $\Fcal^v_L$ $\Gamma$-converge to $\Fcal^v$ as $L\to+\infty$. 

Theorem~\ref{thm:linfty} guarantees that for every $T>0$ the functions $\rho^N_L$ are well defined on $[0,T]$ as soon as $N\geq\bar N$ with $\bar N=\bar N(T, C_0,L,K,\|v\|_{L^\infty})$ and that they enjoy the following bound

\begin{equation}\label{eq:gamma1gamma2}
	\sup_{N\geq\bar N}\sup_{t\in[0,T]}\|\rho^N_L(t)\|_{L^\infty(\T_L)}\leq\bar C,
\end{equation}
where $\bar C=\bar C(K,C_0,C_1,\|v\|_{L^\infty},T)$. In particular, one can observe the following 
\begin{remark}\label{rem:linfty} The functions $\phi_v(\rho^N_L)$ satisfy the upper bound 
	\begin{equation}\label{eq:linftyboundphi}
		\sup_{N\geq\bar N}\sup_{t\in[0,T]}\|\phi_v(\rho^N_L(t))\|_{L^\infty}\leq \phi_v(\bar C).
	\end{equation}
\end{remark}

Moreover, let us   define, for a function $u\chi_{[-L/2,L/2]} \in L^1(\R)$, the quantity
\begin{align*}
	TV(u) &: = \sup_{I\in\N\,\,}\sup_{-L/2=x_0<x_1<\dots<x_I=L/2}\sum_{i=0}^{I-1} |u(x_{i+1})-u(x_i)|.
\end{align*}
Let us observe, that $TV(u)$ corresponds to the standard total variation of the function $u$.

First we show the following compactness result.
\begin{theorem}\label{thm:compactness}
	Let $\rho_0\in L^1(\R)\cap L^\infty(\R)$ such that $\Fcal^v(\rho_0)<+\infty$. Then, for all $T>0$ the 
	functions $\phi_v(\rho^N_L)$ converge strongly in $L^1([0,T]\times\T_L)$ (up to subsequences) to $\phi_v(\rho_L)$, being $\rho_L$ the weak* limit in $L^\infty$ of the deterministic particle approximations $\rho^N_L$. Moreover, the functions $\rho^N_L$ converge strongly in $L^1([0,T]\times\T_L)$ as well. 
\end{theorem}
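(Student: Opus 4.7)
The plan is to establish compactness of the family $\{\rho^N_L\}_{N \geq \bar N}$ in $L^1([0,T] \times \T_L)$ by applying the generalized Aubin--Lions Lemma (Theorem~\ref{thm:aubinlions}) with $X = L^1(\T_L)$, and then to deduce compactness of $\{\phi_v(\rho^N_L)\}$ using the Lipschitz continuity of $\phi_v$ on $[0,\bar C]$, where $\bar C$ is the $L^\infty$ bound from Theorem~\ref{thm:linfty}. Spatial regularity will be extracted from the discrete $W^{1,2}$-type pseudoinverse bound of Corollary~\ref{cor:energybound}, while time regularity will come from the weak formulation of the approximate PDE~\eqref{eq:pdeapprox}.

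For the coercive integrand I would choose
\[
\mathcal G(u) := \|u\|_{L^\infty(\T_L)} + \|\phi_v(u)\|_{BV(\T_L)},
\]
extended by $+\infty$ where undefined. This is l.s.c.\ on $L^1(\T_L)$; its coercivity needs a short argument: any sequence with $\mathcal G(u_n) \leq c$ satisfies $\|\phi_v(u_n)\|_{L^\infty} \leq \phi_v(c)$ and $\|\phi_v(u_n)\|_{BV} \leq c$, so Helly's theorem yields a subsequence with $\phi_v(u_n) \to \xi$ in $L^1$ and a.e.; since $\phi_v$ is continuous and strictly increasing on $[0,c]$, it admits a continuous inverse on $[0,\phi_v(c)]$, so $u_n \to \phi_v^{-1}(\xi)$ a.e., which upgrades to $L^1$-convergence by dominated convergence. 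The tightness condition~\eqref{eq:tight} is then verified: the $L^\infty$ contribution is controlled by Theorem~\ref{thm:linfty}, and for the $BV$ contribution I use the identity $\|\phi_v(\rho^N_L(t))\|_{BV(\T_L)} = \sum_k |G_k(t)|$ combined with Cauchy--Schwarz (first in $k$, then in $t$) and Corollary~\ref{cor:energybound}:
\[
\int_0^T \sum_k |G_k(t)|\, dt \;\leq\; \sqrt{T}\left(N \int_0^T \sum_k G_k^2(t)\, dt \right)^{1/2} \;\leq\; \sqrt{T\, C\, c_L},
\]
uniformly in $N \geq \bar N$.

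For the pseudo-distance I would take $g(u, w) := \|u - w\|_{(W^{1,\infty}(\T_L))^*}$, which is l.s.c.\ on $L^1 \times L^1$ and trivially compatible with $\mathcal G$ since Lipschitz functions separate points in $L^1$. Time regularity follows from Proposition~\ref{prop:approxpde} by testing the weak PDE against $\chi_{(t,t+h)}(s)\,\psi(x)$ with $\|\psi\|_{W^{1,\infty}} \leq 1$, yielding
\[
\left|\int_{\T_L} \psi \bigl(\rho^N_L(t+h)-\rho^N_L(t)\bigr)\, dx\right| \;\leq\; \int_t^{t+h} \|J^N_L(s)\|_{L^1(\T_L)}\, ds,
\]
where $J^N_L$ is the flux on the right-hand side of~\eqref{eq:pdeapprox}. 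The nonlocal part of $J^N_L$ is uniformly bounded in $L^\infty_t L^1_x$ by $c_L^2 \|K'\|_{L^\infty} \|v\|_{L^\infty}$, while the diffusive part $(N/c_L)\,\rho^N_L\, \phi_v^{\mathrm{lin}}(\rho^N_L)$ is pointwise in $t$ estimated by $2\sum_k |G_k(t)|$, using the identity $\rho_k(x_{k+1}-x_k) = c_L/N$ and the piecewise-linear structure of $\phi_v^{\mathrm{lin}}$. Hence $\int_0^T \|J^N_L(s)\|_{L^1}\, ds$ is uniformly bounded via the already-proved $L^1_t BV_x$ estimate, and Fubini yields $\int_0^{T-h} g(\rho^N_L(t+h), \rho^N_L(t))\, dt \leq C h$.

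An application of Theorem~\ref{thm:aubinlions} then produces a subsequence of $\rho^N_L$ converging in measure to some $\rho_L : (0, T) \to L^1(\T_L)$; combined with the uniform $L^\infty$ bound this upgrades to strong convergence in $L^1([0,T] \times \T_L)$, and by Banach--Alaoglu and uniqueness of weak* limits the strong limit agrees with the weak* $L^\infty$ limit of the statement. Passing to a further a.e.\ convergent subsequence and using the Lipschitz continuity of $\phi_v$ on $[0, \bar C]$ yields $\phi_v(\rho^N_L) \to \phi_v(\rho_L)$ strongly in $L^1([0,T] \times \T_L)$ as well. The main obstacle is the construction of the nonstandard integrand $\mathcal G$ and verifying its coercivity: since Corollary~\ref{cor:energybound} only controls jumps of $\phi_v(\rho_k)$ and not of $\rho_k$ themselves, no direct $BV$ bound on $\rho^N_L$ is available (consistent with the absence of $BV$ hypotheses on the initial datum), so the compactness must be routed through $\phi_v(\rho^N_L)$ via the strict monotonicity of $\phi_v$ together with the uniform $L^\infty$ bound.
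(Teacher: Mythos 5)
Your proposal is correct, and it rests on the same two quantitative pillars as the paper's proof --- the uniform $L^\infty$ bound of Theorem~\ref{thm:linfty} and the discrete $W^{1,2}$ estimate of Corollary~\ref{cor:energybound}, converted into an $L^1_t BV_x$ bound on $\phi_v(\rho^N_L)$ by Cauchy--Schwarz --- but it routes the Aubin--Lions argument differently. The paper applies Theorem~\ref{thm:aubinlions} to $\mathcal U=\{\phi_v(\rho^N_L)\}$ with an integrand built from $TV(u)$, an $L^\infty$ indicator and a Wasserstein-ball indicator on $(\phi_v)^{-1}(u)$, and takes as pseudo-distance the $1$-Wasserstein distance between the (normalized) densities; the time equicontinuity~\eqref{eq:gholder} is then obtained in Lagrangian form, via Lemma~\ref{lemma:g}, by estimating $\|X(s)-X(t)\|_{L^1}$ directly from the ODE~\eqref{eq:ode}. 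You instead apply the lemma to $\{\rho^N_L\}$ itself with $\mathcal G(u)=\|u\|_{L^\infty}+\|\phi_v(u)\|_{BV}$, prove coercivity by Helly plus inversion of $\phi_v$, and obtain time equicontinuity in Eulerian form, bounding the dual-Lipschitz norm of $\rho^N_L(t+h)-\rho^N_L(t)$ by the $L^1_{t,x}$ norm of the flux in~\eqref{eq:pdeapprox} (your bookkeeping for the two flux terms is right: the nonlocal part is $O(c_L^2\|K'\|_{L^\infty}\|v\|_{L^\infty})$ in $L^\infty_tL^1_x$, and $\int_{x_k}^{x_{k+1}}\tfrac{N}{c_L}\rho_k\,dx=1$ reduces the diffusive part to $\sum_k|G_k(t)|$). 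The two pseudo-distances are essentially dual to each other by Kantorovich--Rubinstein, so this is a cosmetic rather than substantive divergence, and your reversed final step (strong $L^1$ convergence of $\rho^N_L$ first, then push forward through the locally Lipschitz $\phi_v$) is if anything cleaner than the paper's pull-back through strict monotonicity. What your leaner $\mathcal G$ gives up is reusability: the paper's Wasserstein indicator is redundant on the fixed compact torus $\T_L$ but is there precisely so that the same sublevel compactness survives uniformly in $L$ when the argument is repeated on $\R$ in Theorem~\ref{thm:compactness2}, so you would need to reinstate a tightness term at that stage. One shared caveat, not a gap relative to the paper: both your coercivity argument and the paper's final step invoke invertibility of $\phi_v$, which requires $v>0$ on the range of the densities (otherwise $\phi_v'=sW''(s)v(s)$ vanishes identically past the zero of $v$); since the paper makes the same implicit assumption, this does not count against you.
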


We will need the following preliminary lemma.  

\begin{lemma}\label{lemma:g}
	Let $\rho_L^N : [0,T] \times \T_L\to(0,\infty)$, $N\geq\bar N(T,L,C_0,K,\|v\|_{L^\infty})$ be the deterministic particle approximations starting from $(\rho_{0})_L$ and $t,s\in[0,T]$. Then there exists a constant $C(C_0,K,\|v\|_{L^\infty})$ independent of $N,L$ such that 
	\begin{equation}\label{eq:timecont}
		\mathtt d_{W_1} \left(\frac{\rho_L^N(s)}{\|\rho_L^N(s)\|_{L^1}}, \frac{\rho_L^N(t)}{\|\rho_L^N(t)\|_{L^1}} \right)\leq C(C_0,K,\|v\|_{L^\infty}) |t-s|^{1/2}.
	\end{equation} 
\end{lemma}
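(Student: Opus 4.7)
\textbf{Proof plan for Lemma \ref{lemma:g}.} The plan is to lift the $W_1$ distance to the pseudoinverse representation, differentiate in time using the explicit formula \eqref{eq:dtx} derived in the proof of Proposition \ref{prop:approxpde}, and then split the resulting integrand into a ``drift'' part (bounded by $\|K'\|_{L^\infty}\|v\|_{L^\infty}$) and a ``diffusion'' part (controlled by the discrete $W^{1,2}$ estimate of Corollary~\ref{cor:energybound}).

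Since $t\mapsto \|\rho_L^N(t)\|_{L^1}=c_L$ is constant under the ODE flow, the normalized density $\rho_L^N(t)/c_L$ is a probability measure on $\T_L$ whose pseudoinverse (on $[0,1]$) is simply $z\mapsto X(t,c_Lz)$. Using the pseudoinverse characterization of $W_1$ on the real line (after lifting the monotone rearrangement from $\T_L$) one obtains
\begin{equation*}
\mathtt d_{W_1}\!\left(\frac{\rho_L^N(s)}{c_L},\frac{\rho_L^N(t)}{c_L}\right)\;\le\;\int_0^1 |X(t,c_L z)-X(s,c_L z)|\,\dz\;=\;\frac{1}{c_L}\int_0^{c_L}|X(t,u)-X(s,u)|\du.
\end{equation*}
Since $u\mapsto X(\cdot,u)$ is $C^1$ in time on $[0,T]$, I would write $X(t,u)-X(s,u)=\int_s^t \partial_\tau X(\tau,u)\d\tau$ and then, using the explicit formula \eqref{eq:dtx}, decompose $\partial_\tau X=A(\tau,u)+B(\tau,u)$, where $A$ collects the nonlocal terms (those containing $K'(x_k-x_j)$ multiplied by $v_k$ or $v_{k+1}$) and $B$ collects the diffusive terms (those containing $(N/c_L)G_k$ and $(N/c_L)(G_{k+1}-G_k)$).

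For the drift part, a direct estimate yields the pointwise bound $|A(\tau,u)|\le C\|K'\|_{L^\infty}\|v\|_{L^\infty}c_L$, independent of $N$, so that
\begin{equation*}
\int_s^t\!\!\int_0^{c_L}|A(\tau,u)|\du\,\d\tau\;\le\;C\|K'\|_{L^\infty}\|v\|_{L^\infty}c_L^{\,2}\,(t-s).
\end{equation*}
For the diffusion part, Cauchy--Schwarz in the spatial variable (on an interval of length $c_L\le1$) together with Cauchy--Schwarz in time gives
\begin{equation*}
\int_s^t\!\!\int_0^{c_L}|B(\tau,u)|\du\,\d\tau\;\le\;\sqrt{c_L(t-s)}\left(\int_s^t\!\!\int_0^{c_L}|B(\tau,u)|^2\du\,\d\tau\right)^{\!1/2}.
\end{equation*}
Expanding the piecewise-affine expression for $B$ and reindexing, the inner $L^2$ integral is bounded by a constant multiple of $\sum_k\int_{kc_L/N}^{(k+1)c_L/N}\frac{N^2}{c_L^2}G_k^2(\tau)\dz$, which is exactly the quantity estimated in Corollary~\ref{cor:energybound}. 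Hence, uniformly in $N\ge\bar N$ and $L$ (using $c_L\in(1/2,1]$ for $L$ large and $\Fcal^v_L(\rho_L^N(0))\le C_0$ by \eqref{eq:lambdacond}),
\begin{equation*}
\int_0^T\!\!\int_0^{c_L}|B(\tau,u)|^2\du\,\d\tau\;\le\;C\bigl(C_0,K,\|v\|_{L^\infty},T\bigr).
\end{equation*}
Combining the two estimates and using $(t-s)\le\sqrt{T}\sqrt{t-s}$ finishes the proof.

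The only subtle point is the passage from the Wasserstein distance on $\T_L$ to the pseudoinverse formula; this is handled by choosing the translation of $\T_L$ so that $x_0(t)$ is fixed (as remarked after Proposition~\ref{prop:approxpde}), which makes the monotone rearrangement well-defined and reduces everything to a one-dimensional transport problem. The remaining work is routine book-keeping; the only step that really uses the machinery built so far is the $L^2$ bound for the diffusive part, which is precisely where Corollary~\ref{cor:energybound} enters and where all the dependence on $\Fcal^v(\rho_0)$ is absorbed into the constant.
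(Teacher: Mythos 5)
Your proposal is correct and follows essentially the same route as the paper: both reduce the $W_1$ distance to the $L^1$ distance of the pseudoinverses, use the ODE \eqref{eq:ode} to express the time derivative, and control the diffusive contribution through the discrete $W^{1,2}$ bound of Corollary~\ref{cor:energybound} (the paper applies Cauchy--Schwarz to the full velocity $\frac{c_L}{N}\sum_k|\dot x_k|^2$ at once, whereas you split off the bounded drift first and apply Cauchy--Schwarz only to the diffusive part, but this is a cosmetic reordering). Your explicit remark that the resulting constant also depends on $T$ is accurate and in fact slightly more careful than the statement of the lemma.
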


\begin{proof} 
	Denote by $X(\tau)=X(\tau,\cdot)$ the pseudo-inverse of $\rho^N_L(\tau)$ and let $s<t$.  Let $c_L=\int_{\T_L}(\rho_{0})_L=\int_{\T_L}\rho^N_L(\tau)$.
	In order to estimate the $1$-Wasserstein distance of the deterministic particle approximations at different times we use the well-known identity
	\[
	\mathtt d_{W_1} \left({\rho_L^N(s)}, {\rho_L^N(t)} \right)=\|X(s)-X(t)\|_{L^1([0,c_L])}
	\]
	One has that
	\begin{align*}
		\|X(s)&-X(t)\|_{L^1([0,c_L])}^2	\leq c_L \|X(s)-X(t)\|^2_{L^2([0,c_L])}\notag\\
		&\leq c_L\sum_{k=0}^{N-1}\int_{k c_L/N}^{(k+1) c_L/N}\Bigl|x_k(t)-x_k(s)+\Bigl(\frac{N}{c_L}z-k\Bigr)[x_{k+1}(t)-x_{k}(t)-x_{k+1}(s)+x_k(s)]\Bigr|^2\dz\notag\\
		&\leq\sum_{k=0}^{N-1}\frac{9c_L}{N}|x_k(t)-x_k(s)|^2.
	\end{align*}
	Moreover,
	\begin{align*}
		|x_{k}(t)-x_k(s)|^2=\Big|\int_s^t\dot x_k(\tau)\d\tau\Big|^2\leq|t-s|\int_s^t|\dot x_k(\tau)|^2\d\tau,
	\end{align*}
	hence 
	\begin{align}\label{eq:ftauest}
		\|X(s)-X(t)\|_{L^1}^2&\leq 9|t-s|\int_s^t\frac{c_L}{N}\sum_{k=0}^{N-1}|\dot x_k(\tau)|^2\d\tau
	\end{align}
	provided $N\gg L$.

	By the ODE \eqref{eq:ode} and Corollary \ref{cor:energybound},  for $N$ sufficiently large one has that
	\begin{align}
	\int_s^t\frac{c_L}{N}\sum_{k=0}^{N-1}|\dot x_k(\tau)|^2\d\tau&\leq C(\|K'\|_{L^\infty}, \|v\|_{L^\infty})|t-s|+\int_s^t\sum_{k=0}^{N-1}\int_{kc_L/N}^{(k+1)c_L/N}N^2\bigl(\phi_v(\rho_k(\tau))-\phi_v(\rho_{k-1}(\tau))\bigr)^2\dz\d\tau\notag\\
	&\leq \tilde C_1(\|K'\|_{L^\infty}, \|v\|_{L^\infty}, )(1+|t-s|)+\tilde C_2\Fcal^v_L(\rho_{0,L}).
	\end{align}
Hence, since by assumption $\Fcal^v_L(\rho_{0,L})\leq C_0$, \eqref{eq:timecont} holds.

\end{proof}

In particular one can observe the following

\begin{remark}
	\label{rem:lambda} Let us assume that $\rho_0$ has finite first moments. 
	Since the functions $\rho^N_L(0)$ converge in $L^1(\T_L)$ to $(\rho_{0})_L$, which in turn as $L\to+\infty$ converge to $\rho_0$ in $L^1(\R)$,  by Lemma~\ref{lemma:g} 
	\[
	\mathtt d_{W_1}\Big(\frac{(\rho^N_L)_L(t)}{\|(\rho^N_L(t))_L\|_{L^1}}, \rho_0\Big)\leq C(C_0,K,\|v\|_{L^\infty})T^{1/2}+\tilde C,\quad\forall\,t\in[0,T]
	\] 
	with $\tilde C$ independent of $N,L$ as soon as $N\geq \bar N$ is sufficiently large. 
	
	The above condition gives tightness of the deterministic particle approximations. Thanks to the fact that $\phi_v(\rho)\leq c\rho$ if $\rho\leq 1$, this converts into tightness for the functions $(\phi_v(\rho^N_L))_N$.
	
\end{remark}
We are now ready to prove Theorem \ref{thm:compactness}. 

\begin{proof}[Proof of Theorem \ref{thm:compactness}: ]
	
	Let us define a functional $\mathcal G$ as follows
	\begin{equation}\label{eq:Gdef}
		\mathcal G(u)= TV(u)+\|u\|_{L^1(\R)}+\mathbb 1_{\{\|u\|_{L^\infty}\leq \phi_v(\bar C)\}}+\mathbb 1_{\Big\{\mathtt d_{W_1}\Big(\frac{(\phi_v)^{-1}(u)}{\|(\phi_v)^{-1}(u)\|_{L^1}}, \rho_0\Big)\leq \Lambda\Big\}},
	\end{equation}
	where 
	\begin{equation*}
		\mathbb 1_A(x)=\left\{\begin{aligned}
			&0 && &\text{if $x\in A$}\\
			&+\infty && &\text{if $x\in A^c$},
		\end{aligned}\right.
	\end{equation*}
	$\mathtt d_{W_1}$ is the standard 1-Wasserstein distance between probability measures and $\Lambda$ is a positive constant satisfying 
	\begin{equation*}
		\Lambda\geq 2(C(C_0,K,\|v\|_{L^\infty})T^{1/2}+\tilde C),
	\end{equation*}
	where $C$, $\tilde C$ are the constant defined in Remark \ref{rem:lambda}. In particular,  all the deterministic particle approximations (by lower semicontinuity of $\mathtt d_{W_1}$) satisfy the condition 
	\[
	\mathtt d_{W_1}\Big(\frac{(\rho^N_L(t))_L}{\|(\rho^N_L(t))_L\|_{L^1}}, \rho_0\Big)\leq \Lambda.
	\]

	Moreover, define
	\[
	g(u,w)=\mathtt d_{W_1} \left(\frac{(\phi_v)^{-1}(u)}{\|(\phi_v)^{-1}(u)\|_{L^1}},\frac{(\phi_v)^{-1}(w)}{\|(\phi_v)^{-1}(w)\|_{L^1}}\right).
	\] 
	
	It is fairly easy to see that $\mathcal G$ is a normal coercive integrand. Indeed, the compactness of the sublevels in $L^1_{\loc}$ comes from the first three terms of $\mathcal G$ and gets upgraded to compactness in $L^1$ due to the tightness given by the last term in the definition of $\mathcal G$ and to the condition~$\phi_v(\rho)\leq c\rho$ for $\rho\leq1$. Moreover, $g$ is a l.s.c. pseudo-distance compatible with $\mathcal G$.

	We now apply Theorem~\ref{thm:aubinlions} to the set of functions 
	\[	\mathcal U=\{\phi_v(\rho^N_L)\}_{N\in\N}
	\] and the functionals $\mathcal G$ and $g$ defined above.

	Concerning property~\eqref{eq:gholder}, it follows directly from Lemma \ref{lemma:g}.

	Let us now prove~\eqref{eq:tight} for the functional $\mathcal G$ defined in~\eqref{eq:Gdef} on the functions $\phi_v(\rho^N_L)$. First of all, using the following
	\begin{align}
		\phi_v(s)&=\phi(s)v(s)-\int_0^s\phi(z)v'(z)\dz\notag\\
		\|v'\|_{\infty}&<+\infty	\end{align}
	one has that 
	\begin{align}
		\sup_{N\geq\bar N}\int_0^T\int_{\T_L}\phi_v(\rho^N_L(t,x))\dx\dt&\leq C(\|v\|_{L^\infty}, \|\rho^N_L\|_{L^\infty},m)	\sup_{N\geq\bar N}\int_0^T\int_{\T_L}\rho^N_L(t,x)\dx\dt\notag\\
		&\leq C(\|v\|_{L^\infty}, \|\rho^N_L\|_{L^\infty},m)T,\label{eq:phil1bound}
	\end{align}
where $m$ is the power of the nonlinear diffusion, 
	and one can use Theorem \ref{thm:linfty} to bound $\|\rho^N_L\|_{L^\infty}$.
	Moreover, one has the following
	
	\begin{equation}\label{eq:bvest}
		\sup_{N\geq\bar N}\int_0^T TV(\phi_v(\rho^N_L(t)))\dt \leq C(K, C_0,\|v\|_{L^\infty})(1+T).
	\end{equation}
	
	Indeed, this follows by Corollary \ref{cor:energybound} and by the  inequality
		\begin{equation}\label{eq:tvtv2ineq}
		\sum_{k=0}^{N-1}|a_{k+1}-a_k|\leq\max\Biggl\{1,N\sum_{k=0}^{N-1}|a_{k+1}-a_k|^2\Biggr\}.
	\end{equation}

	From Lemma~\ref{lemma:g}, Remarks~\ref{rem:lambda} and~\ref{rem:linfty} and the bounds~\eqref{eq:phil1bound} and~\eqref{eq:bvest} we deduce that for every $L$ the set
	\[
	\mathcal U=\{\phi_v(\rho^N_L)\}_{N\geq\bar N(T,L, C_0,K,\|v\|_{L^\infty})}
	\]
	satisfies the assumptions of Theorem~\eqref{thm:aubinlions} on $X=L^1(\T_L)$. Hence Theorem~\ref{thm:aubinlions} can be applied, implying the convergence in measure (w.r.t. $t$ with values in $L^1(\T_L)$ and up to subsequences) of the functions $\phi_v(\rho^N_L):(0,T)\times \T_L\to\R$ to a function $\bar{\phi}_{v,L}$. 
	
	By the $L^\infty$ bound~\eqref{eq:linftyboundphi}, the above convergence can be upgraded to strong convergence in $L^1([0,T]\times\T_L)$. 
	
	By Theorem~\ref{thm:linfty}, also the sequence  $\{\rho^N_L\}_{N\geq \bar N}$ is uniformly bounded on $[0,T]$. In particular, the functions  $\rho^N_L$ converge weakly* in $L^\infty([0,T]\times\T_L)$ (up to subsequences) to some bounded function $\rho_L$. Thus by the strict monotonicity and continuity of $\phi_v$ one has that  $\rho^N_L$ converge strongly in $L^1$ to $\rho_L$ and   $\bar{\phi}_{v,L}=\phi_v(\rho_L)$.

	This concludes the proof of Theorem~\ref{thm:compactness}.
\end{proof}
\vskip 0.3 cm

We conclude  this section showing  that also the following compactness result holds.

\begin{theorem}
	\label{thm:compactness2} Let $\rho_0\in L^1(\R)\cap L^\infty(\R)$ with finite first moments and let  $\rho_L:[0,T]\times[-L/2,L/2]\to[0,+\infty)$ be the functions obtained in Theorem~\ref{thm:compactness}. Then, as $L\to+\infty$, $\rho_L$ converge (up to subsequences) strongly in $L^1([0,T]\times\R)$ to   $\rho:[0,T]\times\R\to[0,+\infty)$, where $\rho:[0,T]\times\R\to[0,+\infty)$ is the weak* limit in $L^\infty$ of the functions $\rho_L$. 
	
\end{theorem}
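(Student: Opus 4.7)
\textbf{Proof proposal for Theorem \ref{thm:compactness2}.}
The plan is to apply the generalized Aubin--Lions Theorem \ref{thm:aubinlions} once more, now to the family $\{\phi_v(\rho_L)\}_{L>0}$ (viewed as elements of $L^1(\R)$ by extending by zero outside $[-L/2,L/2]$), and then upgrade the convergence of $\phi_v(\rho_L)$ to strong $L^1$-convergence of $\rho_L$ exactly as at the end of the proof of Theorem \ref{thm:compactness}. The key point to be verified is that every uniform-in-$N$ bound used to prove Theorem \ref{thm:compactness} is in fact also uniform in $L$, so that it passes to the weak* limit defining $\rho_L$.

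First I would verify the $L$-uniformity of the relevant bounds. By $\Gamma$-convergence of $\Fcal^v_L$ to $\Fcal^v$ we have $\sup_L\Fcal^v_L((\rho_0)_L)\leq C_0$, so the constants in Theorem \ref{thm:linfty}, Lemma \ref{lemma:decrease}, Corollary \ref{cor:energybound}, and Lemma \ref{lemma:g} can all be chosen independent of $L$. Lower semicontinuity of the $L^\infty$ norm under weak-* convergence gives $\sup_L\sup_{t\in[0,T]}\|\rho_L(t)\|_{L^\infty}\leq \bar C$; Fatou's lemma gives $\int_\R\rho_L(t)\,\mathrm{d}x\leq 1$ and the TV bound \eqref{eq:bvest} combined with \eqref{eq:tvtv2ineq} descends to
\begin{equation*}
\sup_L\int_0^T TV(\phi_v(\rho_L(t)))\,\mathrm{d}t\leq C(K,C_0,\|v\|_{L^\infty})(1+T).
\end{equation*}
Finally, passing Lemma \ref{lemma:g} to the $L^1$-limit in $N$ yields the $L$-uniform time modulus
\begin{equation*}
\mathtt d_{W_1}\!\left(\frac{\rho_L(s)}{\|\rho_L(s)\|_{L^1}},\frac{\rho_L(t)}{\|\rho_L(t)\|_{L^1}}\right)\leq C|t-s|^{1/2},
\end{equation*}
and the argument of Remark \ref{rem:lambda}, using the finite first moments of $\rho_0$, gives an $L$-uniform tightness bound $\mathtt d_{W_1}(\rho_L(t)/\|\rho_L(t)\|_{L^1},\rho_0)\leq\Lambda$ for large $L$.

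With these ingredients in hand I would then take $X=L^1(\R)$ and reuse the functional $\Gcal$ of \eqref{eq:Gdef} and the pseudo-distance $g$ of the proof of Theorem \ref{thm:compactness}, only enlarging the $L^\infty$ level and the Wasserstein radius $\Lambda$ so that $\Gcal(\phi_v(\rho_L(t)))<+\infty$ for all $L$ and all $t\in[0,T]$. Coercivity of $\Gcal$ on $L^1(\R)$ is guaranteed by the combination of a uniform BV bound (which gives $L^1_{\mathrm{loc}}$-compactness) with the tightness term (which promotes $L^1_{\mathrm{loc}}$ to $L^1(\R)$ compactness), and $g$ is l.s.c. and compatible with $\Gcal$. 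The tightness integral \eqref{eq:tight} is obtained from the TV bound above and from \eqref{eq:phil1bound}, while \eqref{eq:gholder} follows directly from the $L$-uniform time continuity displayed above. Theorem \ref{thm:aubinlions} then yields a subsequence $\phi_v(\rho_{L_k})$ converging in measure with values in $L^1(\R)$, and the $L^\infty$ bound upgrades this to strong $L^1([0,T]\times\R)$-convergence to some $\bar\Phi_v$. Since the $\rho_L$ are uniformly bounded, one has $\rho_{L_k}\rightharpoonup^{*}\rho$ in $L^\infty([0,T]\times\R)$, and the strict monotonicity and continuity of $\phi_v$ together with strong convergence of $\phi_v(\rho_{L_k})$ force $\bar\Phi_v=\phi_v(\rho)$ and $\rho_{L_k}\to\rho$ strongly in $L^1([0,T]\times\R)$.

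The main obstacle I anticipate is the careful bookkeeping at the artificial boundary $\{\pm L/2\}$: restricting $\rho_L$ from $\T_L$ to $[-L/2,L/2)$ and extending by zero potentially creates a boundary jump in $\phi_v(\rho_L)$ that could spoil the TV and $L^\infty$ estimates. This is handled by the uniform tightness produced in Remark \ref{rem:lambda}, which ensures that for any fixed $\eta>0$ the mass of $\rho_L(t)$ in a neighborhood of $\pm L/2$ is arbitrarily small for all large $L$, so the boundary contribution to $TV(\phi_v(\rho_L(t)))$ is $o(1)$ and can be absorbed. Everything else is essentially a repetition of the proof of Theorem \ref{thm:compactness} with the constants traced through to confirm their independence of $L$.
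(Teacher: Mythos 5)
Your proposal is correct and follows essentially the same route as the paper's proof: verify that the estimates of Lemma~\ref{lemma:g}, Remarks~\ref{rem:lambda} and~\ref{rem:linfty} and the bounds~\eqref{eq:phil1bound},~\eqref{eq:bvest} depend only on $C_0,C_1$ and not on $N$ or $L$, transfer them to the limit functions $\rho_L$ by lower semicontinuity of the total variation and of $\mathtt d_{W_1}$, and then rerun the Aubin--Lions argument of Theorem~\ref{thm:compactness} on $X=L^1(\R)$, identifying the limit via strict monotonicity of $\phi_v$. The only cosmetic difference is your treatment of the boundary jump of $\phi_v(\rho_L)$ at $\pm L/2$, which is controlled more directly by the uniform $L^\infty$ bound (it contributes at most $2\phi_v(\bar C)$ to the total variation) than by the tightness argument you sketch; this does not affect the validity of the proof.
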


\begin{proof}
	
	In order to prove Theorem~\ref{thm:compactness2} it is sufficient to observe that the estimates of Lemma~\ref{lemma:g}, Remarks~\ref{rem:lambda} and~\ref{rem:linfty} and the upper bounds~\eqref{eq:phil1bound} and~\eqref{eq:bvest} do not depend on $N\geq\bar N,L$ but only on  $C_0$, $C_1$ as in~\eqref{eq:lambdacond}.  Moreover, by lower semicontinuity of the total variation and of the $1$-Wasserstein distance $\mathtt d_{W_1}$ such estimates  uniformly hold also for the sequences $\rho_L$,  $\phi_v(\rho_L)$. Hence it is possible to apply Theorem~\ref{thm:aubinlions} to the sequence $\phi_v(\rho_L)$ on $[0,T] \times \R$ and repeat the previous reasoning obtaining a strong $L^1$ limit $\phi_v(\rho):[0,T]\times\R\to[0,+\infty)$ with $\rho_L$ converging to $\rho$ strongly in $L^1$.

\end{proof}


\section{Convergence to solutions of the  PDE}\label{sec:limit}

Our first goal is to prove Theorem~\ref{thm:convpdelfixed}. One of the differences w.r.t. \cite{DRR} is that for general  non-constant mobilities a minimum principle for initial densities bounded from below does not hold, namely the approximate densities $\{\rho^N_L(t)\}$ may converge to zero as $N\to +\infty$ on some space intervals even if $\{\rho^N_L(0)\}$ is uniformly bounded from below by a positive constant. In particular, an analogue of Lemma 5.8 in \cite{DRR} does not hold. This leads to an additional error term in the limit PDE which is concentrated on the vacuum regions.

\begin{proof}[Proof of Theorem~\ref{thm:convpdelfixed}:]
	Let $\rho_{0,L}$ an $L^1$ density on $\T_L$, and let $c_L=\int_{T_L}\rho_{0,L}\leq1$. W.l.o.g. we can assume that $\rho_{0,L}=(\rho_0)_L$ for some $\rho_0\in L^1(\R)\cap L^\infty(\R)$. In particular, if $N$ is sufficiently large, one has that
	\begin{equation*}
		\sup_{N}\|\rho^N_L(0)\|_{L^\infty(\T_L)}\leq C_1,\qquad\sup_N\Fcal^v_L(\rho^N_L(0))\leq C_0,
	\end{equation*}
	where $\rho^N_L$ is the deterministic particle approximation on $\T_L$ starting from $\rho_{0,L}$ (see~\eqref{eq:lambdacond}). In particular, Theorem~\ref{thm:compactness} guarantees that, up to subsequences, $\rho^N_L$ strongly converges in $L^1([0,T]\times \T_L)$ to a density $\rho_L\in L^1([0,T]\times\T_L)$.

	Remember that, by~\eqref{eq:pdeapprox}, for every fixed $N \in \N, L>0$ the piecewise constant function $\rho^N_L(t,x)$ satisfies, for every $\varphi \in C^\infty_c((0,T)\times \T_L)$, the following equation	
	\begin{equation}\label{eq:pde3}
		\int_0^T \int_{\T_L} \rho^N_L(t,x) \partial_t \varphi(t,x) - \rho^N_L(t,x) \big[{K^\lin}'(\rho^N_L,v^N_L) \ast \rho^N_L(t,x) + \frac{N}{c_L}\phi_v^{\mathrm{lin}}(\rho^N_L(t,x)) \big]\partial_x \varphi(t,x) \dx\dt = 0
	\end{equation}
	where we defined 
	\begin{align*}
		\phi_v^{\mathrm{lin}}(\rho^N_{L}(t,x)) := \sum_{k=0}^{N-1} \chi_{[x_k(t), x_{k+1}(t))} (x) \Bigl[ G_k(t) + \Bigl(\frac{N}{c_L}M_{\rho^N_L}(x) - k\Bigr)(G_{k+1}(t) - G_k(t)) \Bigr]
	\end{align*}
	and $G_k(t) = \phi_v(\rho_k(t)) - \phi_v(\rho_{k-1}(t))$. 
	
	In the following we want to show that as $N\to+\infty$ the equation~\eqref{eq:pde3} converges to the PDE \eqref{eq:pdel} for the density $\rho_L$.

	In particular  we will prove  that 	as $N$ tends to $+\infty$
	\begin{align}
		&\int_0^T \int_{\T_L} \big(\rho^N_L(t,x) - \rho_L(t,x)\big) \partial_t \varphi(t,x) \dx \dt \longrightarrow 0,\label{eq:lim1}\\
		&{\int_0^T \int_{\T_L} \big((\rho^N_L(t,x){K^\lin}'(\rho^N_L,v^N_L) \ast \rho^N_L(t,x) - \rho_{L}K' \ast \rho_L (t,x)v(\rho_L)(t,x) \big) \partial_x \varphi(t,x) \dx \dt \longrightarrow 0,}\label{eq:lim2}
	\end{align}
	and that
	\begin{align}
		\lim_{N\to\infty}\int_0^T \int_{\T_L}\rho^N_L(t,x)\frac{N}{c_L} \phi_v^{\mathrm{lin}}(\rho^N_L(t,x)) \partial_x \varphi(t,x) \dx \dt& =- \int_0^T \int_{\T_L} \Phi_{v,L}(t,x)\partial_{xx} \varphi(t,x) \dx \dt,\label{eq:lim3}
	\end{align}	
where $\Phi_{v,L}( t,x)$ will be implicitly defined later (see \textbf{Step 2}) and is such that $\Phi_{v,L}( t,x)=\phi_v(\rho_L(t,x))$ for a.e. $x\in\{\rho_L(t)>0\}$.

	The convergence in~\eqref{eq:lim1} is an immediate consequence of the $L^\infty$ weak* compactness of the densities $\rho^N_L$ implied by Theorem~\ref{thm:linfty}.

\textbf{Step 1}

	Let us focus on the convergence in~\eqref{eq:lim2}. Simple computations lead to the equivalent expression
	\begin{align}
		\int_0^T \int_{\T_L} &\big(\rho^N_L(t,x){K^\lin}'(\rho^N_L,v^N_L) \ast \rho^N_L(t,x) - \rho_LK'\ast \rho_L (t,x)v(\rho_L(t,x)) \big) \partial_x \varphi(t,x) \dx \dt = \notag\\
		= &  \int_0^T \int_{\T_L} (\rho^N_L(t,x) - \rho_L(t,x)) K' \ast \rho_L(t,x)v(\rho_L(t,x)) \partial_x \varphi(t,x) \dx \dt\notag \\
		& + \int_0^T \int_{\T_L} \rho^N_L(t,x) K'\ast\big( \rho^N_L(t,x) - \rho_L (t,x)\big)v(\rho_L(t,x)) \partial_x \varphi(t,x) \dx \dt \notag\\
		&+ \int_0^T \int_{\T_L} \rho^N_L(t,x) K'\ast \rho^N_L(t,x) (v^N_L(t,x)-v(\rho_L(t,x))) \partial_x \varphi(t,x) \dx \dt \notag\\
		& + \int_0^T \int_{\T_L} \rho^N_L(t,x) \big({K^\lin}'(\rho^N_L,v^N_L)\ast\rho^N_L(t,x) - K'  \ast \rho^N_L(t,x)v^N_L(t,x)\bigr) \partial_x \varphi(t,x) \dx \dt\label{eq:convkernel}
	\end{align}
	The first and the second term of the r.h.s. of the above converge to $0$ as $N \to \infty$ because of the $L^\infty$ weak* compactness of the $\rho^N_L$.
	
	As for the third term of \eqref{eq:convkernel}, first notice that
	\begin{equation}
		\int_0^T \int_{\T_L} \rho^N_L(t,x) K'\ast \rho^N_L(t,x) (v(\rho^N_L(t,x))-v(\rho_L(t,x))) \partial_x \varphi(t,x) \dx \dt \quad\longrightarrow\quad0,
	\end{equation}
since $\rho^N_L\to\rho_L$ in any $L^p$ with $p<\infty$ and $v$ is Lipschitz by assumption. Hence, to estimate the third term of \eqref{eq:convkernel} we need to show that
	\begin{equation}\label{eq:conv30}
	\int_0^T \int_{\T_L} \rho^N_L(t,x) K'\ast \rho^N_L(t,x) (v^N_L(t,x)-v(\rho^N_L(t,x))) \partial_x \varphi(t,x) \dx \dt \quad\longrightarrow\quad0.
\end{equation}
	As observed above, since $\phi_v(\rho_L(t))\in BV(\T_L)$ for a.e. $t$ and $\phi_v$ is a strictly monotone function, $\rho_L(t)$ is continuous up to a countable number of points $J_L(t)$ where it has a jump. Moreover, it is fairly easy to see that $\rho^N_L(t)$ is locally in $BV$ at a point $x$ whenever it is bounded from below by a positive constant. For a.e. point $x$ in $\T_L\setminus J_L(t)$, $\rho^N_L(t,x)\to\rho_L(x)$. Let $x\in\T_L\setminus J_L(t)$ such that $\rho_L(t,x)>0$ and $\rho^N_L(t,x)\to\rho_L(t,x)$. If $x\in[x^N_{k(N,x)},x^N_{k(N,x)+1})$, then $x^N_{k(N,x)+1}-x^N_{k(N,x)}\to0$ and $x^N_{k(N,x)}-x^N_{k(N,x)-1}\to0$ with $\rho^N_{k(N,x)}(t),\rho^N_{k(N,x)-1}(t)\to\rho_L(t,x)$. Thus $v^N_L(t,x)=v_{k(N,x)}(t)=v\bigl(\max(\rho^N_{k(N,x)}(t),\rho^N_{k(N,x)-1}(t))\bigr)\to v(\rho_L(t,x))$. Where $\rho_L(t,x)=0$, then $v^N_L$ might not have the same limit of $v(\rho^N_L)$, but the factor $\rho^N_L$ in the integrand of the third term of \eqref{eq:convkernel} tends to $0$, thus by dominated convergence we can conclude that \eqref{eq:conv30} holds.

	Let us now proceed to estimate the last term in \eqref{eq:convkernel}. Analogously to the proof of Lemma \ref{lemma:kbounds}, we are left to prove that 
	\begin{align}
		\Bigg|\sum_{k=0}^{N-1} &\int_{kc_L/N}^{(k+1)c_L/N} \rho^N_L(t, X(t,z)) \left(\frac{N}{c_L}z - k \right)\cdot\notag\\
		&\cdot \frac{c_L}{N} \Bigl( v_{k+1}(t) \sum_{j\neq k+1} K'(x_{k+1} - x_j) - v_k(t) \sum_{j\neq k} K'(x_k - x_j)  \Bigr)\partial_x\varphi(t,X(t,z)) \dz\Bigg|\longrightarrow0. \label{eq:conv40}
	\end{align}
As above, at the points $x$ where $\rho^N_L(t,x)\to\rho_L(t,x)=0$, one can deduce pointwise convergence to $0$ of the whole integrand and use dominated convergence. At the other points $x\in\T_L\setminus J_L(t)$ where $\rho^N_L(t,x)\to\rho_L(t,x)>0$, one has that  $v^N_L(t,x)$ and $v^N_L(t,y_N)$ have the same limit as $N\to+\infty$, where if $x\in[x^N_{k(N,x)},x^N_{k(N,x)+1})$ then $y_N $ is any point belonging to $[x^N_{k(N,x)+1},x^N_{k(N,x)+2})$. Moreover, at such points (being $\rho^N_L\geq\eps(x)>0$ for $N\geq N'$ and being $x$ a point of continuity of $\rho_L>0$) one has  that $x^N_{k(N,x)+1}-x^N_{k(N,x)}\to0$. Thus also in this case the integrand in \eqref{eq:conv40} converges to $0$ and thus by dominated convergence the limit relation in \eqref{eq:conv40} holds.  

\textbf{Step 2} 
	
	Let us now prove~\eqref{eq:lim3}.

	One has that 
	\begin{align*}
		&\int_0^T \int_{\T_L}\rho^N_L(t,x)\frac{N}{c_L} \phi^\lin_v(\rho^N_L(t,x)) \partial_x \varphi(t,x) \dx \dt= \notag\\
		&=\sum_{k=0}^{N-1} \int_{0}^T\int_{x_k}^{x_{k+1}}\frac{\phi_v(\rho^N_L(t,x_{k}))-\phi_v(\rho^N_L(t,x_{k-1}))}{x_{k+1}-x_k}\partial_x \varphi(t,x)\dx\dt\notag\\
		&+\sum_{k=0}^{N-1} \int_{0}^T\int_{x_k}^{x_{k+1}}\Big[\frac{\phi_v(\rho^N_L(t,x_{k+1}))-\phi_v(\rho^N_L(t,x_k))}{x_{k+1}-x_k}-\frac{\phi_v(\rho^N_L(t,x_{k}))-\phi_v(\rho^N_L(t,x_{k-1}))}{x_{k+1}-x_k}\Big]\cdot\notag\\
		&\cdot\Bigl(\frac{N}{c_L} M_{\rho^N_L(t)}(x)-k\Bigr)\partial_x \varphi(t,x)\dx\dt\notag
	\end{align*}

Integrating by parts  and setting 
\begin{equation}
	a_j:={\phi_v(\rho^N_L(t,x_{j}))-\phi_v(\rho^N_L(t,x_{j-1}))}.
\end{equation}
it holds
	\begin{align*}
	&	\sum_{k=0}^{N-1} \int_{0}^T\int_{x_k}^{x_{k+1}}\frac{\phi_v(\rho^N_L(t,x_{k}))-\phi_v(\rho^N_L(t,x_{k-1}))}{x_{k+1}-x_k}\partial_x \varphi(t,x)\dx\dt\notag\\
		&+\sum_{k=0}^{N-1} \int_{0}^T\int_{x_k}^{x_{k+1}}\Big[\frac{\phi_v(\rho^N_L(t,x_{k+1}))-\phi_v(\rho^N_L(t,x_k))}{x_{k+1}-x_k}-\frac{\phi_v(\rho^N_L(t,x_{k}))-\phi_v(\rho^N_L(t,x_{k-1}))}{x_{k+1}-x_k}\Big]\cdot\notag\\
		&\cdot\Bigl(\frac{N}{c_L} M_{\rho^N_L(t)}(x)-k\Bigr)\partial_x \varphi(t,x)\dx\dt=\notag\\
		&= -\int_{0}^T\int_{\T_L}\Biggl\{\sum_{j=0}^{k(N,x)-1}\int_{x_j}^{x_{j+1}}\frac{1}{x_{j+1}-x_j}\Bigl[a_j(t)+\Bigl(\frac{N}{c_L} M_{\rho^N_L(t)}(y)-j\Bigr)(a_{j+1}(t)-a_j(t))\Bigr]\dy\notag\\
		&+\int_{x_{k(N,x)}}^{x}\frac{1}{x_{k(N,x)+1}-x_{k(N,x)}}\Bigl[a_{k(N,x)}(t)+\Bigl(\frac{N}{c_L} M_{\rho^N_L(t)}(y)-k(N,x)\Bigr)(a_{k(N,x)+1}(t)-a_{k(N,x)}(t))\Bigr]\dy\Biggr\}\cdot\notag\\
		&\cdot\partial_{xx}\varphi(t,x)\dx\dt\notag\\
		&=-\int_{0}^T\int_{\T_L}\Biggl\{\sum_{j=0}^{k(N,x)-1}\Bigl[a_j(t)+\frac12(a_{j+1}(t)-a_j(t))\Bigr]+\frac{x-x_{k(N,x)}}{x_{k(N,x)+1}-x_{k(N,x)}}a_{k(N,x)}(t)\notag\\
		&+\int_{x_{k(N,x)}}^{x}\frac{1}{x_{k(N,x)+1}-x_{k(N,x)}}\Bigl(\frac{N}{c_L} M_{\rho^N_L(t)}(y)-k(N,x)\Bigr)(a_{k(N,x)+1}(t)-a_{k(N,x)}(t))\Bigr]\dy\Biggr\}\cdot\notag\\
		&\cdot\partial_{xx}\varphi(t,x)\dx\dt\notag\\
		&=-\int_{0}^T\int_{\T_L}\Biggl\{-\phi_v(\rho^N_L(t,x_{N-1}))+\phi_v(\rho^N_L(t,x_{k(N,x)-1}))\notag\\
		&+\frac{x-x_{k(N,x)}}{x_{k(N,x)+1}-x_{k(N,x)}}\Bigl[\phi_v(\rho^N_L(t,x_{k(N,x)}))-\phi_v(\rho^N_L(t,x_{k(N,x)-1}))\Bigr]\notag\\
		&+\frac12\Bigl[\phi_v(\rho^N_L(t,x_{N-1}))-\phi_v(\rho^N_L(t,x_0))+\phi_v(\rho^N_L(t,x_{k(N,x)}))-\phi_v(\rho^N_L(t,x_{k(N,x)-1}))\Bigr]\notag\\
		&+\int_{x_{k(N,x)}}^{x}\frac{1}{x_{k(N,x)+1}-x_{k(N,x)}}\Bigl(\frac{N}{c_L} M_{\rho^N_L(t)}(y)-k(N,x)\Bigr)(a_{k(N,x)+1}(t)-a_{k(N,x)}(t))\Bigr]\dy\Biggr\}\cdot\partial_{xx}\varphi(t,x)\dx\dt\notag\\
		&=:-\int_{0}^T\int_{\T_L}\hat\Phi_v(\rho^N_L,t,x)\partial_{xx}\varphi(t,x)\dt\dx.
		\end{align*}
	W.l.o.g., we can assume that up to subsequences
	\begin{equation}
		x_{N-1}\to \bar x,\quad \rho^N_L(t,x_{N-1})\to \rho_L(\bar x),\quad \rho^N_L(t,x_{0})\to \rho_L(x_0).
	\end{equation}
Consider first the points $x$ belonging to the set 
\begin{equation}
	\Omega_+(t):=\{x\in\T_L\setminus J_L(t):\,\rho_L(t,x)>0, \,\rho_L(t,x)=\lim_N\rho^N_L(t,x)\},
\end{equation}
where $J_L(t)$ is the set of discontinuity points of $\rho_L(t,\cdot)$. One has that 
\begin{equation}
	\hat\Phi_v(\rho^N_L,t,x)\to\hat c_L(t)+\phi_v(\rho_L(t,x)),\quad	\forall\,x\in\Omega_+(t),
\end{equation}
where we set 
\[
\hat c_L(t):=-\phi_v(\rho_L(t,\bar x))+\frac12\Bigl[\phi_v(\rho_L(t,\bar x))-\phi_v(\rho_L(t,x_0))\Bigr].
\]

Indeed, 	since $\phi_v(\rho_L(t))\in BV(\T_L)$ for a.e. $t$ and $\phi_v$ is a strictly monotone function, $\rho_L(t)$ is continuous up to a countable  number of points $J_L(t)$ where it has a jump. Moreover, it is fairly easy to see that $\rho^N_L(t)$ is locally in $BV$ at a point $x$ whenever it is bounded from below by a positive constant. By the $L^1$-convergence Theorem \ref{thm:compactness}, for a.e. point $x$ in $\T_L\setminus J_L(t)$ one has that $\rho^N_L(t,x)\to\rho_L(x)$. Let $x\in\T_L\setminus J_L(t)$ such that $\rho_L(t,x)>0$ and $\rho^N_L(t,x)\to\rho_L(t,x)$. If $x\in[x^N_{k(N,x)},x^N_{k(N,x)+1})$, then $x^N_{k(N,x)+1}-x^N_{k(N,x)}\to0$ and $x^N_{k(N,x)}-x^N_{k(N,x)-1}\to0$ with $\rho^N_{k(N,x)}(t),\rho^N_{k(N,x)-1}(t),\rho^N_{k(N,x)+1}(t)\to\rho_L(t,x)$.
Notice that
\begin{equation}
	\Bigl|\{x\in\T_L:\rho_L(t,x)>0\}\setminus\Omega_+(t)\Bigr|=0
\end{equation}
Consider now the subset of $\{\rho_L(t,\cdot)=0\}$ given by
\begin{equation}
	\Omega_0(t):=\{x\in\T_L\setminus J_L(t): \, x\in\cap_N[x^N_{k(N,x)},x^N_{k(N,x)+1}),\rho^N_L(t,x)\to\rho_L(t,x)=0\}.
\end{equation} 

Notice that
\begin{equation}
	\phi_v(\rho_L(t,x))=0\quad\text{ whenever }\rho_L(t,x)=0.
\end{equation}
One has that, up to subsequences, for a.e. $x\in\Omega_0(t)$
\begin{equation}
	\hat\Phi_v(\rho^N_L,t,x)\to\hat{\Phi}_{v,L}(t,x),
\end{equation}
where $\hat{\Phi}_{v,L}=\hat c_L$ if $\rho^N_L(t,x^N_{k(N,x)+1})\to0$ and $\rho^N_L(t,x^N_{k(N,x)-1})\to0$, which happens e.g. if $\inf_N|x^N_{k(N,x)+2}-x^N_{k(N,x)+1}|>0$ and $\inf_N|x^N_{k(N,x)}-x^N_{k(N,x)-1}|>0$. In particular, $\lim_N x^N_{k(N,x)+1}$ and $\lim_N x^N_{k(N,x)-1}$ are at the boundary of $\{\rho_L=0\}$ whenever $\hat{\Phi}_{v,L}\neq \hat c_L$.

Thus, by dominated convergence, one has that 
\begin{equation}
		\int_0^T \int_{\T_L}\rho^N_L(t,x)\frac{N}{c_L} \phi^\lin_v(\rho^N_L(t,x)) \partial_x \varphi(t,x) \dx \dt\to\quad\underset{N\to+\infty}{\longrightarrow}\quad - \int_0^T \int_{\T_L} \Phi_{v,L}(t,x)\partial_{xx} \varphi(t,x) \dx \dt,
\end{equation}
where $\Phi_{v,L}(t,x)=\phi_v(\rho_L(t,x))$ on a.e. $x\in\{\rho_L>0\}$.

\end{proof}	  

We now proceed to the proof of Theorem~\ref{thm:convinl}.

\begin{proof}[Proof of Theorem~\ref{thm:convinl}:] Let $ \rho_0>0$ a bounded $L^1$ density with unit mass and let 
	\begin{equation*}
		\hat{(\rho_0)}_L(x)=\rho_0(x)\chi_{[-L/2,L/2)}(x).
	\end{equation*} 
	Denote now by ${(\rho_0)}_L$ the $L$-periodic extension of $\hat{(\rho_0)}_L$ or its corresponding function on $\T_L$. Denoting by $\rho^N_L$ the deterministic particle approximations defined starting from ${(\rho_0)}_L$, we proved in the previous theorem that they converge, up to subsequences, to a bounded $L^1$ solution $\rho_L$ of the following PDE in weak form
	\begin{align}\label{eq:varphieq}
		\int_0^T \int_{\T_L}&  \rho_L(t,x) \partial_t \varphi(t,x) - \rho_{L}K' \ast \rho_L (t,x)v(\rho_L(t,x)) \partial_x \varphi(t,x) +\Phi_{v,L}(t,x)\partial_{xx}\varphi(t,x)\dx\dt=0,
	\end{align} 
	where $\varphi\in C^\infty_c((0,T)\times\T_L)$ and  $\Phi_{v,L}(t,x)=\phi_{v}(\rho_L(t,x))$ for a.e. $x\in\{\rho_L(t)>0\}$. 
	
	Let now $\psi\in C^\infty_c((0,T)\times\R)$. In particular, there exists $\bar L$ such that 
	\[
	\mathrm{spt}\psi\subset\!\subset [0,T]\times (-\bar L/2,\bar L/2).
	\] 
	For $L\geq\bar L$, denote by ${\psi}_L$ the $L$-periodic extension of $\psi$. Then, by~\eqref{eq:varphieq} one has that
	\begin{align}\label{eq:psieq}
		\int_0^T \int_{\T_L} & \rho_L(t,x) \partial_t \psi_L(t,x) - \rho_{L}K' \ast \rho_L (t,x)v(\rho_L(t,x)) \partial_x \psi_L(t,x) +\Phi_{v,L}(t,x)\partial_{xx}\psi_L(t,x)\dx \dt=0.
	\end{align} 
	Denoting by ${\rho}_L$ the $L$-periodic function on $\R$ corresponding to $\rho_L$ on $\T_L$ and setting  $\hat\rho_L={\rho}_L\chi_{[-L/2,L/2)}$ and $\hat\psi={\psi}\chi_{[-L/2,L/2)}$,~\eqref{eq:psieq} rewrites as  
	\begin{align*}
		\int_0^T \int_{\R} & \hat\rho_L(t,x) \partial_t \hat\psi(t,x) - \hat\rho_{L}(t,x)K' \ast \rho_L(t,x) v(\hat{\rho}_L(t,x)) \partial_x \hat\psi(t,x) +\Phi_{v,L}(t,x)\partial_{xx}\hat\psi(t,x)\dx \dt=0.	\end{align*}
	
	By Theorem~\ref{thm:compactness2} we know that, up to subsequences, $\hat\rho_L$ converge in $L^1([0,T]\times\R)$ to a bounded $L^1$ density $\rho$ as $L\to\infty$. In particular, as $L\to\infty$,
	
	\begin{equation*}
		\int_0^T \int_{\R}  \bigl(\hat\rho_L(t,x)-\rho(t,x)\bigr) \partial_t \hat\psi(t,x)+\bigl(\Phi_{v,L}(t,x)-\Phi_v(t,x)\bigr)\partial_{xx}\hat\psi(t,x)\dx \dt\quad\longrightarrow\quad0,
	\end{equation*}
where $\Phi_v(t,x)=\phi_v(\rho(t,x))$ for a.e. $x\in\{\rho(t)>0\}$.
	
	Finally,
	\begin{equation*}
		\int_0^T \int_{\R} \bigl(\hat\rho_{L}(t,x)K' \ast\hat \rho_L (t,x)v(\hat{\rho}_L(t,x)) -\rho(t,x)K'\ast\rho(t,x)v({\rho}_L(t,x))\bigr)\partial_x \hat\psi(t,x)\dx\dt\quad\longrightarrow\quad0.
	\end{equation*}
	Thus we are left to show that
	\begin{equation*}
		\int_0^T \int_{\R} \hat\rho_{L}(t,x)K' \ast (\rho_L-\hat{\rho}_L) (t,x)v(\hat{\rho}_L(t,x))\partial_x \hat\psi(t,x)\dx\dt\quad\longrightarrow\quad0.
	\end{equation*}
	Using the fact that $\|\rho_L\|_{\infty}\leq\bar C$ and  that $K'\in L^1(\R)\cap L^\infty(\R)$ (see~\eqref{eq:k4}), one has that
	\begin{align*}
		\int_0^T \int_{\R} &|\hat\rho_{L}(t,x)|\int_{\R}|K'(x-y)| |\rho_L(t,y)-\hat{\rho}_L(t,y)| \dy|\partial_x \hat\psi(t,x)|\dx\dt\leq\notag\\
		&= \int_0^T \int_{\R} |\hat\rho_{L}(t,x)|\int_{\R\setminus[-L/2,L/2]}|K'(x-y)| |\rho_L(t,y)|\dy|\partial_x \hat\psi(t,x)|\dx\dt\notag\\
		&=J^L_1+J^L_2,
	\end{align*}
	with
	\begin{align*}
		J^L_1&=\int_0^T \int_{[-L/2,L/2]} |\hat\rho_{L}(t,x)|\int_{\R\setminus[-3L/4,3L/4]}|K'(x-y)| |\rho_L(t,y)|\dy|\partial_x \hat\psi(t,x)|\dx\dt,\notag\\
		J^L_2&=\int_0^T \int_{[-L/2,L/2]} |\hat\rho_{L}(t,x)|\int_{[-3L/4,-L/2]\cup[L/2,3L/4]}|K'(x-y)| |\rho_L(t,y)|\dy|\partial_x \hat\psi(t,x)|\dx\dt.
	\end{align*}
	On one hand,
	\begin{equation*}
		J^L_1\lesssim \|\hat\psi\|_{C^1}\int_{\{|z|>L/4\}}|K'(z)|\dz\quad\longrightarrow\quad0.
	\end{equation*}

	On the other hand,
	\begin{equation*}
		J^L_2\lesssim \|\hat\psi\|_{C^1}\|K'\|_{L^\infty}\int_{\{L/4<|y|<L/2\}}\hat \rho_L(y)\dy\quad\longrightarrow\quad0,
	\end{equation*}
	where we used the tightness of the measures $\hat{\rho}_L$ proved in Remark~\ref{rem:lambda}.
	
	Thus the proof of Theorem~\ref{thm:convinl} is concluded.
	
\end{proof}

\end{document}